\theoremstyle{plain}
\newtheorem{theorem}{Theorem}[section]
\newtheorem{lemma}[theorem]{Lemma}
\newtheorem{corollary}[theorem]{Corollary}
\theoremstyle{definition}
\newtheorem{definition}[theorem]{Definition}
\newtheorem{example}[theorem]{Example}
\newtheorem{question}[theorem]{Question}
\theoremstyle{remark}
\newtheorem*{remark}{Remark}
\newcommand{\qi}{quasi-isometric }
\newcommand{\Z}{\mathbb{Z}}
\newcommand{\Img}{\operatorname{Im}}
\newcommand{\Pqs}{\operatorname{PQStab}}
\newcommand{\Ps}{\operatorname{PStab}}
\newcommand{\lk}{\operatorname{link}}
\newcommand{\diam}{\operatorname{diam}}
\newcommand{\supp}{\operatorname{supp}}
\newcommand{\esupp}{\operatorname{esupp}}
\newcommand{\lv}{\operatorname{l_{V_\Gamma}}}
\newcommand{\fv}{\operatorname{f_{V_\Gamma}}}
\newcommand{\Pc}{\operatorname{Pc_\Gamma}}
\title{Groups Acting Acylindrically on Trees}
\author{William D Cohen}
\address{Centre for Mathematical Sciences, University of Cambridge, 
Cambridge, CB3 0WA}
\email{wdc26@cam.ac.uk}
\begin{document}
\begin{abstract} We develop a notion of groups that act acylindrically and non-elementarily on simplicial trees, which we call acylindrically arboreal groups. We then prove a complete classification of when graph products of groups and the fundamental groups of certain hyperbolic $3$-manifolds are acylindrically arboreal, and use these classifications to provide examples of acylindrically hyperbolic groups that have actions on trees but have no non-elementary acylindrical actions on trees.
\end{abstract}

\maketitle
\section{Introduction}
The definition of an acylindrical action on a tree was first formulated by Sela \cite{Sela97}, and later generalised by Weidmann \cite{Weidmann07} to the following. 

\begin{definition}\label{Def:introkc} Let $G$ be a group acting by simplicial isometry on some simplicial tree $T$ and let $k\geq0$ and $C>0$ be integers. We say that the action of $G$ on $T$ is $(k, C)$\emph{-acylindrical} if the pointwise stabiliser of any edge path in $T$ of length at least $k$ contains at most $C$ elements.
\end{definition}

A third definition was formulated by Bowditch \cite{Bowditch08} to give a property of an action on a general metric space, which is more coarse-geometric. The definition of Bowditch can be shown to agree with that of Weidmann in the context of actions on trees, in a result essentially due to Minasyan and Osin \cite{minasyanOsin13}(c.f. Theorem~\ref{Thm:KCisAA}). Groups that act acylindrically non-elementarily (c.f. Theorem~\ref{Thm:categorisedActions}) on hyperbolic spaces are a well-studied generalisation of hyperbolic groups. Such groups are called \emph{acylindrically hyperbolic}.

The class of acylindrically hyperbolic groups is very broad -- such a large number and variety of groups are acylindrically hyperbolic that it can be difficult to discern whether they have certain properties in general. For example, it is still an open problem whether the property of being acylindrically hyperbolic is preserved under quasi-isometry \cite[Question~2.20(a)]{Osin18} or even passes to finite index overgroups \cite[Question 2]{minasyanOsin19}, although some progress was made on the latter question by Balasubramanya \cite{BalasubramanyaFiniteIndex}.

It is natural to ask which acylindrically hyperbolic groups act acylindrically non-elementarily on simplicial trees. Indeed, it is a well-known result of Balasubramanya that all acylindrically hyperbolic groups admit acylindrical actions on quasi-trees \cite{Bala16}, and further restricting to trees allows us to make use of a wide range of strong theory. All actions will be by isometry, and all trees will be assumed to be simplicial with the natural edge metric. We make the following definition.

\begin{definition}
    Let $G$  be a group. Then we say that $G$ is \emph{acylindrically arboreal} if $G$ admits a non-elementary acylindrical action on some tree $T$.
\end{definition}

Many groups are already known to be acylindrically arboreal, although we believe that this terminology is novel. For example Wilton and Zalesskii show that if a closed and orientable irreducible 3-manifold $M$ is non-geometric then the splitting of $\pi_1(M)$ given by the JSJ-decomposition of $M$ induces an acylindrical action on the associated Bass--Serre tree \cite[Lemma~2.4]{WiltonZalesskii08}, so such groups will either be acylindrically arboreal or virtually cyclic by \cite[Theorem~1.1]{Osin13} (c.f. Theorem~\ref{Thm:categorisedActions}). 

The vast majority of other examples arise from a result of Gitik, Mj, Rips and Sageev \cite[Main Theorem]{GitikMahanRipsSageev98} that implies that a splitting of a hyperbolic group over a quasi-convex subgroup will induce an acylindrical action on the associated Bass--Serre tree. The study of quasi-convex splittings of hyperbolic groups has proved very fruitful (see \cite{HaglundWise12, WiseQCBook}, for example).

The acylindrical action on a tree rather than on a generic hyperbolic metric space makes studying the properties of acylindrically arboreal groups much simpler than studying the properties of acylindrically hyperbolic groups in general. For example, the following are properties of acylindrically arboreal groups that are either unknown for acylindrically hyperbolic groups in general (even for those which split) or have a much more satisfying solution in our restricted class.

\begin{itemize}
    \item \textbf{\textit{Quasi-isometry : }}It is very simple to see that acylindrical arboreality is not preserved by quasi-isometry. For example, the $(2, 3, 7)$-triangle group has property (FA) (c.f. Definition~\ref{def:FA}) by a well-known result of Serre (\cite[Corollary 2 of Theorem I.6.26]{stilwell2002trees}, c.f. Theorem~\ref{thm:eliptGen}), and this property will naturally prohibit any action on a tree from fulfilling the non-elementary requirement in the definition of acylindrical arboreality. However, this group is virtually a hyperbolic surface group, which is acylindrically arboreal by considering cuts along simple closed curves.
    \item \textbf{\textit{Growth Rates : }}Using the fact that in a non-elementary acylindrical action on a tree we are guaranteed to find loxodromic elements in words of length at most two over any generating set (c.f Corollary~\ref{cor:eliptgen}), we can learn much about the growth properties of acylindrically arboreal groups. In particular, Kerr \cite[Proposition~1.0.9]{Kerr2021ProductSG} and Fujiwara \cite[Theorem~1.1]{Fujiwara21} prove strong results about the growth rates of acylindrically hyperbolic groups where we can find loxodromics quickly. It is however worth noting that the Fujiwara result requires equational Noetheriality in an essential way, and it is still very much open when an acylindrically arboreal group is equationally Noetherian (see \cite[Theorem~1.9]{Valiunas20} for a strong condition under which equational Noetheriality will hold).
    \item \textbf{\textit{Explicit Constructions in Bounded Cohomology : }}Monod and Shalom explicitly use the acylindrical action on a tree to show that all acylindrically arboreal groups are in the class $\mathcal{C}_{\operatorname{reg}}$ (see \cite[Theorem~7.7, Corollary~7.10 and Remark~7.11]{MonodShalom04}), which is equivalent to the second bounded cohomology with coefficients in the regular representation not vanishing (see \cite{MonodShalom06} for a formal treatment of this property). Their proof is via an explicit construction of the desired non-boundary cocycle, and although subsequent results have shown that all acylindrically hyperbolic groups are in $\mathcal{C}_{\operatorname{reg}}$ (see \cite[Corollary~B]{Hamenstadt08}, \cite[Corollary~1.7]{HullOsin13}), the constructions in the general case are much less explicit. 
\end{itemize}

An acylindrical action on a tree also has strong implications for the Farrel--Jones conjecture, as shown by Knopf \cite{Knopf19}.

Our main results are complete classifications of two very important classes of groups, those of graph products of groups and fundamental groups of compact and orientable hyperbolic 3-manifolds with empty or toroidal boundary. The acylindrical hyperbolicity of groups in these classes was considered in \cite{minasyanOsin13}, and while all manifolds considered in this paper have relatively hyperbolic fundamental groups which must then be acylindrically hyperbolic \cite[Theorem~1.2 and Proposition~2.12]{Osin13}, the results of this paper relating to graph products of groups should be compared to those in \cite[Section~2.3]{minasyanOsin13}.

First, if $\mathbb{GP}(\Gamma, \mathcal{G})$ is a graph product we will say that a pair of vertices $a$ and $b$ of $\Gamma$ are \emph{separated} (with respect to the graph product $\mathbb{GP}(\Gamma, \mathcal{G})$) if the $\Gamma$-edge distance between $a$ and $b$ is at least $2$ and the subgroup generated by the vertex groups corresponding to $\lk_\Gamma(\{a, b\})$ is finite, or equivalently that the set of vertices adjacent to both $a$ and $b$ induces a complete subgraph of $\Gamma$ and are all labelled with finite groups.

\begin{restatable}{theorem}{introGPthm}
\label{Thm:mainIntro} Let $G=\mathbb{GP}(\Gamma, \mathcal{G})$ be a non-degenerate graph product of groups such that $\diam(\Gamma)\geq 2$. Then $G$ is acylindrically arboreal if and only if $G$ is not virtually cyclic and there exists a pair of vertices $a, b\in V(\Gamma)$ that are separated.
\end{restatable}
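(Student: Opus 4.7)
The strategy splits into the two implications: the backward direction amounts to a concrete Bass--Serre construction, while the forward direction requires an obstruction argument from the existence of large commuting subgroups to a violation of acylindricity.

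For the backward implication, let $a, b$ be a separated pair in $\Gamma$ and write $C$ for the (finite) subgroup of $G$ generated by $\{G_v : v \in \lk_\Gamma(\{a,b\})\}$. Every vertex of $\Gamma$ adjacent to both $a$ and $b$ lies in $\lk_\Gamma(\{a,b\})$, so one should be able to construct a graph-of-groups decomposition of $G$ whose edge groups are all conjugate into $C$: if $\lk_\Gamma(\{a,b\})$ separates $a$ from $b$ in $\Gamma$, then $G$ is expressed as an amalgamated free product over $C$, and otherwise as an HNN extension whose associated subgroup is a copy of $C$. In either case, the resulting Bass--Serre tree $T$ has finite edge stabilisers, so the action of $G$ on $T$ is automatically acylindrical via Theorem~\ref{Thm:KCisAA}. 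Non-elementariness should then follow from the non-degeneracy of $\mathbb{GP}(\Gamma, \mathcal{G})$ together with the assumption that $G$ is not virtually cyclic: these together allow one to exhibit two independent loxodromic elements on $T$.

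For the forward implication, suppose $G$ acts non-elementarily on a tree $T$ with acylindricity constants $(k, C_0)$, so in particular $G$ is not virtually cyclic. Assume for contradiction that no separated pair exists; since $\diam(\Gamma) \geq 2$ there is at least one non-adjacent pair $\{a, b\}$, and for every such pair the subgroup $C_{a,b} \leq G$ generated by $\{G_v : v \in \lk_\Gamma(\{a,b\})\}$ is infinite. Now $C_{a,b}$ centralises both $G_a$ and $G_b$ in $G$, yet $\langle G_a, G_b\rangle$ is highly non-abelian because $a, b$ are non-adjacent and the graph product is non-degenerate. The plan is to exploit this contrast: analyse the action of $C_{a,b}$ on $T$ according to whether its elements are elliptic or loxodromic, use the centralising condition to show that $G_a$ and $G_b$ preserve the corresponding fixed subtree or axis, and extract an edge path of length at least $k$ whose pointwise stabiliser is infinite, contradicting $(k, C_0)$-acylindricity.

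The main obstacle I anticipate is the last step of the forward direction: extracting an explicit long edge path with infinite pointwise stabiliser from the purely algebraic commutation data requires a careful case analysis, very likely supplemented by an auxiliary structural lemma about centralisers in acylindrically arboreal groups (e.g.\ that a virtually non-cyclic centraliser forces a long fixed segment). A secondary difficulty is the HNN case in the backward direction, where the relevant splitting of $G$ over $C$ is less immediate than in the separating case and must be justified by a careful examination of the graph-product presentation.
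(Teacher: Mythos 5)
Your backward-direction construction has a genuine gap: the splitting of $G$ over the finite group $C=G_{\lk_\Gamma(\{a,b\})}$ that you propose generally does not exist. The set $\lk_\Gamma(\{a,b\})$ need not separate $a$ from $b$ in $\Gamma$, and in that situation there is no HNN extension of $G$ with associated subgroup $C$ either. A concrete counterexample is the right angled Coxeter group on the cycle $C_5$: any two non-adjacent vertices $a,b$ form a separated pair (their common link is a single vertex carrying a finite group), the group is not virtually cyclic, yet its abelianisation is finite, so it admits no HNN splitting at all, and deleting $\lk_\Gamma(\{a,b\})$ leaves $a$ and $b$ connected, so no amalgam over $C$ arises from the graph structure. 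The paper sidesteps this entirely by splitting over an \emph{infinite} edge group: it takes $A=\Gamma-b$, $B=\Gamma-a$, $C'=\Gamma-\{a,b\}$, so that $G\cong G_A*_{G_{C'}}G_B$ always holds for non-adjacent $a,b$, and then proves the action on the Bass--Serre tree is $(3,|G_{\lk_\Gamma(\{a,b\})}|)$-acylindrical by showing, via the normal-form machinery (Lemma~\ref{lem:amLem3.2} and Lemma~\ref{lem:fullSub}), that the pointwise stabiliser of any $3$-path is conjugate into a subgroup of the finite group $G_{\lk_\Gamma(\{a,b\})}$. Your expectation that one can get finite edge stabilisers directly is what fails; the finiteness only appears at path length $3$, not $1$, and establishing it is the real content of this direction.

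Your forward-direction plan is closer in spirit to the paper but is incomplete in two ways. First, the step you flag as the "main obstacle" is essentially Lemma~\ref{lem:BrokenProducts} (a subgroup of the form $H_1\times H_2$ with both factors infinite must act elliptically in any acylindrical action on a tree), which the paper imports rather than reproves; applying it to $G_{\lk_\Gamma(\{u,v\})}\times(G_u*G_v)$ for a non-separated, non-adjacent pair $u,v$ gives ellipticity of $G_{\{u,v\}}$ immediately. Second, and more importantly, you stop at non-adjacent pairs: to conclude that the \emph{whole} action is elliptic you must also handle adjacent pairs of vertices (the paper's three subcases, including the delicate one where exactly one vertex group is infinite, which forces $\Gamma$ to be complete and contradicts $\diam(\Gamma)\geq 2$) and then assemble the pairwise ellipticity into global ellipticity via the Serre-type generation criterion (Corollary~\ref{cor:eliptgen}). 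Without that assembly step, knowing that various pairs of generators act elliptically does not by itself contradict the existence of a loxodromic element.
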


Here $\diam(\Gamma)$ is the graph theoretical diameter (c.f. Definition~\ref{def:diam}). It is worth noting here that we have no restrictions on the vertex groups of $G=\mathbb{GP}(\Gamma, \mathcal{G})$, and the condition that $G$ is not virtually cyclic is not overly restrictive. Indeed, if $\diam(\Gamma)\geq 2$ then we have that $G$ is virtually cyclic if and only if $\Gamma$ is a complete graph minus one edge $e$, all vertex groups of $\Gamma$ are finite and the endpoints of $e$ are both copies of $\Z/2\Z$, and more generally a group that is virtually cyclic cannot admit a non-elementary action on any hyperbolic space, even allowing for non-acylindrical actions, as it cannot contain multiple independent loxodromic elements. 

In a similar vein, we prove a similar sufficient condition in Corollary~\ref{cor:subgrpsGP} for a subgroup of a graph product of groups to be acylindrical arboreal, which should be compared with \cite[Theorem~2.12]{minasyanOsin13}.

A group may fail to be acylindrically arboreal by having no action on a tree that is sufficiently complex. We make the following definition.
\begin{definition}
    Let $G$ be a group. We say that $G$ has the \emph{weakened (FA) property}, denoted \emph{(FA$^-$)}, if whenever $G$ acts on a tree $T$ it must either fix some point $p\in T$ (not in the boundary of $T$) or fix some bi-infinite geodesic $L\subset T$ setwise.
\end{definition}
This is similar to the definition of property A$\mathbb{R}$ defined by Culler and Vogtmann \cite{Culler1996AGT} for groups acting on real trees, excepting that they only allow the action of the group on the fixed line to be by translation.

We will see that property (FA$^-$) prohibits acylindrical arboreality in the sense that a group with (FA$^-$) can never fulfil the non-elementary condition when acting upon a tree, but that the reverse implication is not true --- there exist many acylindrically hyperbolic (or even hyperbolic) groups that do not have (FA$^-$) but which are not acylindrically arboreal. For example, let $Q$ be a finitely presented but not hyperbolic group with an action on a tree with no fixed points or lines, such as the Baumslag--Solitar group $\operatorname{BS}(2, 3)\cong \langle a, b\mid ab^2a^{-1}b^{-3}\rangle$. Using the Rips construction of Ollivier and Wise \cite[Theorem~1.1]{OlivierWise04} it is possible to extend $Q$ by an infinite group $N$ with Kazhdan's property (T) (see \cite{Zuk03}, for example) to obtain a hyperbolic group $G$, which will have an infinite normal (T) (and therefore (FA$^-$) by \cite{Watatani}, or \cite[Theorem B]{NibloReeves97}) subgroup. Such a subgroup prohibits the acylindrical arboreality of $G$ by the fact that as with acylindrical hyperbolicity, acylindrical arboreality is inherited by infinite normal subgroups (Lemma~\ref{lem:inheritance}(3), see \cite[Lemma~7.1]{Osin13} for the original proof for acylindrically hyperbolic groups). However, $G/N\cong Q$ admits an action on a tree with no fixed points or lines by construction, so $G$ will not have (FA)$^-$ as it will act on the same tree via the quotient map.

Using Theorem~\ref{Thm:mainIntro} we will obtain the following result.
\begin{restatable}{proposition}{brokenraag}
\label{prop:brokenRAAG}
   There exists an acylindrically hyperbolic right angled Artin group $G$ that is not acylindrically arboreal but does not have property (FA$^-$). Furthermore, we can construct $G$  such that it has no non-trivial normal subgroups with property (FA$^-$).
\end{restatable}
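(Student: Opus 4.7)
The plan is to take $G = A_\Gamma$ to be the right-angled Artin group on $\Gamma = C_5$, the pentagon. Every vertex group of a RAAG is $\mathbb{Z}$, so the subgroup generated by any nonempty set of vertex groups is infinite; hence a pair $(a,b)$ of vertices is separated if and only if $a,b$ are non-adjacent and share no common neighbour. In $C_5$ every pair of non-adjacent vertices has a unique common neighbour, so no separated pair exists. Moreover $G$ is not virtually cyclic (its abelianisation is $\mathbb{Z}^5$), so Theorem~\ref{Thm:mainIntro} implies $G$ is not acylindrically arboreal. Since $C_5$ is not a join, $G$ is acylindrically hyperbolic by the standard classification of acylindrically hyperbolic RAAGs. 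To show $G$ does not have (FA$^-$), I would use a standard vertex splitting $G = A_{\Gamma \setminus v} *_{A_{\lk(v)}} A_{\{v\} \cup \lk(v)}$ and its Bass--Serre tree $T_v$: the edge group $A_{\lk(v)} \cong F_2$ has infinite index in both vertex groups, so $T_v$ has infinite valence, and it is easy to produce two hyperbolic elements of $G$ with disjoint axes in $T_v$, so $G$ fixes neither a point nor a line.

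The main work is to show no nontrivial normal subgroup $N \trianglelefteq G$ has (FA$^-$). The starting observation is that $\bigcap_v A_{\Gamma \setminus v} = \{e\}$ in $G$ (only the identity has empty support), so one may choose $v$ with $N \not\leq A_{\Gamma \setminus v}$. A normal subgroup $N$ fixes a vertex of $T_v$ iff $N \leq A_{\Gamma \setminus v}$ or $N \leq A_{\{v\} \cup \lk(v)}$; the former is excluded by the choice of $v$, and the latter is ruled out by noting that a nontrivial element of $A_{\{v\} \cup \lk(v)}$ whose conjugate by a generator outside $\{v\} \cup \lk(v)$ remains in $A_{\{v\} \cup \lk(v)}$ must commute with that generator, which imposes support restrictions incompatible with $N \trianglelefteq G$. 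Now suppose $N$ fixes no vertex but fixes a line $L$ setwise; for $g \in G$, normality forces $N$ to fix $gL$ as well. If $gL \neq L$, the two lines either meet in a segment or are joined by a bridge, and at least one vertex of this intersection or bridge is $N$-fixed, a contradiction. So $gL = L$ for all $g \in G$, which forces $G$ to fix $L$, contradicting the previous paragraph. Hence $N$ fixes no line and so does not have (FA$^-$).

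The main obstacle will be the case analysis excluding $N \leq A_{\{v\} \cup \lk(v)}$ for every $v$: a careful combinatorial argument in the RAAG normal form is needed, tracking how conjugation by the two generators outside $\{v\} \cup \lk(v)$ (which in $C_5$ are adjacent and each have exactly one common neighbour lying in $\lk(v)$) constrains the support of elements of $N$, so that $N \leq A_{\{v\} \cup \lk(v)}$ for all $v$ can only occur when $N$ is trivial.
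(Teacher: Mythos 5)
The first half of your proposal is correct and matches the paper: $C_5$ is irreducible (its complement is again $C_5$, hence connected), so $A_{C_5}$ is acylindrically hyperbolic by Theorem~\ref{thm:OsinGPAH}; and since every non-adjacent pair of vertices of $C_5$ shares a common neighbour whose vertex group is the infinite group $\Z$, there is no separated pair, so Theorem~\ref{Thm:mainIntro} (equivalently Corollary~\ref{cor:GPinfVertex}, as $\diam(C_5)=2$) rules out acylindrical arboreality. The paper uses a different six-vertex graph but explicitly remarks that $C_5$ works equally well.

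For the claim that no nontrivial normal subgroup has (FA$^-$) you take a genuinely different, hands-on Bass--Serre route, and as written it has a real gap: the step you yourself label ``the main obstacle'' --- excluding $N\leq A_{\{v\}\cup\lk_\Gamma(v)}$ --- is asserted but never carried out, and the justification offered (``must commute with that generator'') is not an argument. The step is in fact closable using Lemma~\ref{lem:amLem3.2}: if $1\neq x\in N\leq G_U$ with $U=\{v\}\cup\lk_\Gamma(v)$ and $a_w$ is a generator with $w\notin U$, then $a_wxa_w^{-1}\in N\leq G_U$ and $\lv(a_w)\cap\supp_\Gamma(x)=\emptyset$, so the lemma forces the single-syllable word $(a_w)$ to lie in $G_U\cdot G_{\lk_\Gamma(\supp_\Gamma(x))}$; since $w\notin U$ this gives $\supp_\Gamma(x)\subseteq U\cap\lk_\Gamma(w)$, and intersecting over the two choices of $w$ outside the star of $v$ in $C_5$ yields $\supp_\Gamma(x)=\emptyset$, a contradiction. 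You would also need to justify the existence of two hyperbolic elements of $T_v$ with disjoint axes, and to note that torsion-freeness of RAAGs disposes of finite nontrivial normal subgroups. The paper sidesteps all of this with one structural input: by \cite[Corollary~1.6]{Antolinminasyan11} every subgroup of a RAAG is either free abelian of finite rank or surjects onto $F_2$, and a group surjecting onto $F_2$ fails (FA$^-$) (pull back the non-elliptic, line-free action of $\Z*\Z$ on its Bass--Serre tree), so any normal subgroup with (FA$^-$) must be free abelian; but an acylindrically hyperbolic group has no infinite normal abelian subgroup by \cite[Lemma~7.1]{Osin13}. That route is shorter, requires no case analysis over vertices or normal forms, and simultaneously shows that $G$ itself fails (FA$^-$).
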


Our second main theorem, Theorem~\ref{thm:mainHypMfd} below, provides a complete classification of when the fundamental group of a compact and orientable hyperbolic 3-manifold with empty or toroidal boundary is acylindrically arboreal --- see Section~\ref{sec:hyperbolic} for the relevant definitions.

\begin{restatable}{theorem}{intromfdclass}
\label{thm:mainHypMfd}
	Let $M$ be a compact and orientable hyperbolic 3-manifold with empty or toroidal boundary. Then $\pi_1(M)$ is acylindrically arboreal if and only if $M$ contains an embedded $2$-sided incompressible closed subsurface $\Sigma$ that is not isotopic to any boundary component of $M$, and such that the image of the natural inclusion $\pi_1(\Sigma)\hookrightarrow\pi_1(M)$ is geometrically finite.
\end{restatable}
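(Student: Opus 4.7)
The plan is to prove the two directions separately, exploiting the topological correspondence between splittings of 3-manifold groups and embedded incompressible surfaces.

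For the backward direction, suppose such a $\Sigma$ exists. Cutting $M$ along the two-sided surface $\Sigma$ yields a non-trivial graph of spaces decomposition of $M$ (non-trivial because $\Sigma$ is incompressible and not boundary-parallel), inducing a splitting of $\pi_1(M)$ as an amalgamated free product or HNN extension over $\pi_1(\Sigma)$. I would consider the associated Bass--Serre tree $T$; non-triviality of the splitting, together with the fact that $\pi_1(M)$ is not virtually cyclic (guaranteed by the existence of the closed surface subgroup $\pi_1(\Sigma)$ of infinite index), would give a non-elementary action in the sense of Theorem~\ref{Thm:categorisedActions}. For acylindricity, the key input is that a closed geometrically finite subgroup of a finite-volume Kleinian group is quasi-Fuchsian, and hence has ``almost-malnormal'' conjugation behaviour: the intersection of $\pi_1(\Sigma)$ with any distinct conjugate is contained in an elementary (virtually cyclic or parabolic) subgroup, which one can read off from the limit sets. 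Combining this bounded intersection property with the relatively hyperbolic structure of $\pi_1(M)$ relative to cusp subgroups, I would bound the pointwise stabiliser of a sufficiently long edge path in $T$ and then invoke Theorem~\ref{Thm:KCisAA} to conclude acylindrical arboreality.

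For the forward direction, suppose $\pi_1(M)$ acts non-elementarily and acylindrically on a tree $T$. After passing to a minimal invariant subtree, I obtain a non-trivial splitting $\pi_1(M) = A \ast_C B$ or $A \ast_C$ with edge group $C$. The acylindricity constraint, combined with the relatively hyperbolic structure of $\pi_1(M)$, forces $C$ to be finitely generated and relatively quasi-convex, and moreover prevents $C$ from containing any cusp subgroup: a cusp $\mathbb{Z}^2$ fixing an edge would force arbitrarily large stabilisers along paths fixed by its centraliser, contradicting acylindricity. Using the algebraic-to-geometric translation for splittings of 3-manifold groups (due to Jaco--Shalen and Scott, refined in the relatively hyperbolic setting), I would realize this splitting by an embedded, two-sided, incompressible surface $\Sigma \subset M$ with $\pi_1(\Sigma)$ conjugate to $C$. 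The absence of peripheral elements in $C$ ensures $\Sigma$ is closed; non-triviality of the splitting forbids $\Sigma$ being isotopic to a component of $\partial M$; and the relative quasi-convexity of a closed surface subgroup in a Kleinian group is equivalent to the subgroup being geometrically finite.

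The main obstacle I expect is the geometric realization step in the forward direction: translating an abstract acylindrical splitting into an embedded surface with precisely the right topological properties. This requires careful application of the Scott compact core theorem together with topological rigidity results for geometrically finite Kleinian subgroups, and the subtle verification that no accidental parabolic elements appear in the edge group when passing from the algebraic splitting to the embedded surface. Controlling this interaction between the algebra of splittings and the geometry of cusps is where I anticipate the heaviest technical machinery will be needed.
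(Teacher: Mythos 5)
Your backward direction follows the paper's route: geometric finiteness gives relative quasi-convexity (Theorem~\ref{thm:GFisQC}), the Hruska--Wise finiteness of intersections of essentially distinct conjugates (Theorem~\ref{thm:FinRelH}) together with the absence of parabolics in a closed surface subgroup gives $(n_H,1)$-acylindricity, and non-triviality of the splitting comes from $\Sigma$ not being boundary-parallel (the paper makes this last point precise with the $I$-bundle theorem, Theorem~\ref{thm:HempelIBundles}). Your stronger claim that the intersection of $\pi_1(\Sigma)$ with any single distinct conjugate is elementary is neither needed nor obviously true; what is actually used is the uniform bound on the \emph{number} of essentially disjoint conjugates whose common intersection is non-trivial.

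The forward direction has a genuine gap. You assert that ``the acylindricity constraint, combined with the relatively hyperbolic structure of $\pi_1(M)$, forces $C$ to be finitely generated and relatively quasi-convex.'' There is no such general principle, and this assertion is precisely the content of the theorem: by subgroup tameness (Theorem~\ref{thm:tameness}) the relevant surface subgroups are either geometrically finite or virtual fibres, and the entire difficulty is excluding the virtual fibre case. A virtual fibre subgroup is not relatively quasi-convex, yet it is a perfectly good edge group of a non-trivial splitting (of a fibred manifold, say), so quasi-convexity cannot be read off from the splitting alone; acylindricity must enter in a structural way. The paper's argument is: apply Theorem~\ref{Thm:CullerShalen} to produce closed $2$-sided incompressible surfaces whose fundamental groups are \emph{contained in} edge groups (note: contained in, not conjugate to --- your realization step claims more than Culler--Shalen delivers); show that if such a surface group were a virtual fibre then the ambient edge group would be one too; and then invoke Theorem~\ref{thm:mfdCateg}, which says a non-trivial splitting over a virtual fibre subgroup forces $M$ to be a surface bundle or a double of twisted $I$-bundles, making that edge group \emph{normal} in $G$. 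An infinite elliptic subgroup normalised by a loxodromic element is incompatible with acylindricity, and this contradiction is what eliminates the virtual fibre case. Without an argument of this kind your forward direction does not close. (Your treatment of cusps is also off: the paper handles boundary tori by noting that their $\Z^2$ subgroups act elliptically by Lemma~\ref{lem:BrokenProducts} and then using the second part of Theorem~\ref{Thm:CullerShalen} to push the surfaces off $\partial M$, which is what makes them closed.)
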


Along with the proof of a result of Minasyan and Osin \cite[Theorem~3]{minasyanOsin19}, Theorem~\ref{thm:mainHypMfd} restricted to the closed case may be used to give a complete classification of the acylindrical arboreality of the fundamental groups of closed and orientable $3$-manifolds.

Many hyperbolic groups are already known to be acylindrically arboreal, but it is unknown in general whether all acylindrically arboreal hyperbolic groups admit a quasi-convex or malnormal splitting. We thus ask the following question.

\begin{question}\label{Question2}
    Let $G$ be an acylindrically arboreal hyperbolic group. Does $G$ necessarily admit a non-elementary splitting over either a quasi-convex or finitely generated malnormal subgroup?
\end{question}

Using Theorem~\ref{thm:mainHypMfd} we provide a partial answer to this question.

\begin{restatable}{theorem}{intromfd}
\label{thm:intromfd}
    Let $G=\pi_1(M)$ be the fundamental group of a closed and orientable hyperbolic 3-manifold. Then the following are equivalent:
    \begin{enumerate}
        \item The group $G$ admits a non-elementary quasi-convex splitting;
        \item The group $G$ is acylindrically arboreal;
        \item The group $G$ does not have property (FA$^-$).

    \end{enumerate}
\end{restatable}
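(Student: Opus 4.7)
I will establish $(1) \Leftrightarrow (2) \Leftrightarrow (3)$ by proving the four implications $(1) \Rightarrow (2)$, $(2) \Rightarrow (3)$, $(2) \Rightarrow (1)$ and $(3) \Rightarrow (2)$, with Theorem~\ref{thm:mainHypMfd} as the pivotal tool. The implication $(1) \Rightarrow (2)$ follows from the result of Gitik, Mj, Rips and Sageev quoted in the introduction: a non-elementary quasi-convex splitting of a hyperbolic group induces a non-elementary acylindrical action on its Bass-Serre tree. The implication $(2) \Rightarrow (3)$ is immediate: a non-elementary acylindrical action contains two hyperbolic isometries with disjoint endpoint pairs, so it neither fixes a point nor setwise preserves a bi-infinite geodesic, witnessing failure of (FA$^-$).

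For $(2) \Rightarrow (1)$, Theorem~\ref{thm:mainHypMfd} (with $M$ closed, so the boundary-isotopy clause is vacuous) yields an embedded two-sided incompressible closed subsurface $\Sigma \subset M$ whose inclusion into $G$ is geometrically finite, i.e.\ quasi-Fuchsian. Quasi-Fuchsian subgroups of closed hyperbolic $3$-manifold groups are quasi-convex, so cutting $M$ along $\Sigma$ produces a quasi-convex splitting of $G$ over $\pi_1(\Sigma)$. I will verify non-elementariness by contrapositive: if the associated Bass-Serre tree were a line, then in the separating case both vertex groups would contain $\pi_1(\Sigma)$ with index $2$, forcing $M$ to be a semi-bundle with $\Sigma$ as semi-fibre; in the non-separating case the edge group would equal the vertex group, forcing $M$ to fibre over $S^{1}$ with $\Sigma$ as fibre. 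Either outcome makes $\pi_1(\Sigma)$ geometrically infinite, contradicting quasi-Fuchsianness.

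The main obstacle is $(3) \Rightarrow (2)$. Given an action of $G$ on a tree $T$ with no global fixed point and no invariant bi-infinite geodesic, I pass to the minimal $G$-invariant subtree and apply a Dunwoody/Stallings-style realization of splittings of $3$-manifold groups by embedded incompressible surfaces to produce a two-sided incompressible closed surface system $S \subset M$ whose Bass-Serre tree is (equivalent to) $T$. The key input is the classification of embedded incompressible closed subsurfaces in closed hyperbolic $3$-manifolds --- via Thurston's covering theorem combined with tameness (Bonahon, Agol-Calegari-Gabai) --- as either quasi-Fuchsian or (semi-)fibres of $M$. If every component of $S$ were a (semi-)fibre, a homological intersection argument shows the components must be pairwise parallel, so the dual graph of $S$ in $M$ is a cycle or interval and the resulting Bass-Serre tree is forced to be a line, contradicting the choice of $T$. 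Hence some component of $S$ is quasi-Fuchsian, and Theorem~\ref{thm:mainHypMfd} delivers that $G$ is acylindrically arboreal.
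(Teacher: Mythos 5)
Your overall strategy coincides with the paper's: Theorem~\ref{thm:mainHypMfd} is the pivot, the equivalence of (1) and (2) is routed through geometric finiteness versus relative quasi-convexity, (2)$\Rightarrow$(3) is immediate, and the hard implication is handled by realizing a splitting by incompressible surfaces, invoking tameness to split into the geometrically finite and virtual-fibre cases, and showing the all-fibre case forces an invariant line. However, your argument for (3)$\Rightarrow$(2) has a genuine gap at the realization step. You claim to produce an embedded incompressible surface system $S$ ``whose Bass-Serre tree is (equivalent to) $T$.'' The Stallings--Epstein--Waldhausen realization (Theorem~\ref{Thm:CullerShalen} in the paper) does not deliver this: it only produces surfaces $\Sigma_i$ with $\Img(\pi_1(\Sigma_i)\to\pi_1(M))$ \emph{contained in} some edge group of the given splitting, and complementary pieces carried into vertex groups. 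The dual tree of $S$ is in general a different $G$-tree from $T$ (related at best by an equivariant resolution map, whose direction and minimality properties you would still have to control). Consequently your final contradiction --- ``the resulting Bass-Serre tree is forced to be a line, contradicting the choice of $T$'' --- contradicts a statement about the dual tree of $S$, not about $T$, and the implication does not close.

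The paper avoids this by never comparing trees: it passes to a one-edge collapse of the given splitting, uses the containment $\pi_1(\Sigma)\leq G_e'$ together with a finite-index argument to promote virtual-fibredness from the surface group up to the edge group $G_e$ itself, then applies Theorem~\ref{thm:mfdCateg} to conclude $G_e$ is normal in $G$, so that the set of edges of $T$ lying over $e$ is itself a $G$-invariant line. You would need either this transfer-to-the-edge-group step, or a careful resolution argument showing that an invariant line in the dual tree of $S$ pushes forward to an invariant line or fixed point in $T$. Two smaller points: in (2)$\Rightarrow$(1) your contrapositive only excludes the lineal case, not the elliptic one (for closed $M$ this is easily dispatched via Theorem~\ref{thm:HempelIBundles}, as the paper does inside the proof of Theorem~\ref{thm:mainHypMfd}, but it should be said); and the assertion that disjointly embedded virtual fibres are pairwise parallel needs the structure result that an embedded virtual-fibre surface in a closed hyperbolic $3$-manifold is a fibre or semi-fibre, which is exactly the content of Theorem~\ref{thm:mfdCateg} rather than a bare ``homological intersection argument.''
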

An example of an acylindrically arboreal hyperbolic group with no quasi-convex splitting would require a rather pathological construction, although we are inclined to believe that such an example will exist. For example, the double of a hyperbolic group $G$ over a finitely presented subgroup $H$ will be hyperbolic only if $H$ is undistorted \cite[Theorem I.$\Gamma$.6.20]{BridsonHaefliger}, so a counterexample using the doubling construction would require a subgroup of a hyperbolic group which is finitely generated but not finitely presented. Such examples are poorly understood in general, and tend to have large normalisers in the ambient group \cite{WiseCoherence}.
\subsection*{Acknowledgements}
This work was completed while the author was a PhD student at the University of Cambridge, supervised by Dr Jack Button. The author is very grateful to his supervisor for suggesting the direction of this research and for all of his help, and to Macarena Arenas and Francesco Fournier-Facio for their helpful comments. The author is also very grateful to two anonymous reviewers for their helpful comments, in particular for the suggestion of a more streamlined statement and proof for Lemma~\ref{lem:AAimpliesFinite}, and for advice on how to make Section~\ref{sec:hyperbolic} clearer and more rigorous. Finally, financial support from the Cambridge Trust Basil Howard Research Graduate Studentship is gratefully acknowledged. 
\section{Preliminaries}\label{sec:prelim}
\subsection{Acylindrical Hyperbolicity}
We will define a hyperbolic metric space to be a geodesic metric space satisfying any of the standard equivalent conditions, for example the thin triangles condition for $\delta$-hyperbolicity due to Rips \cite[Definition III.H.1.1]{BridsonHaefliger}. We have the following coarse-geometric definition of acylindricity due to Bowditch.
\begin{definition}\label{def:acyl}
   Let $G$ be a group acting on a metric space $(X, d)$ by isometry. For all $\epsilon\geq 0$, $x, y\in X$ we define the \emph{Pointwise Quasi-Stabiliser} $\Pqs_G^{\epsilon}(\{x, y\})$ to be the set 
    $\{g\in G\mid d(x, gx)\leq\epsilon\text{ and }d(y, gy)\leq\epsilon\}.$
    We say that $G$ acts \emph{acylindrically} on $X$ if for all $\epsilon\geq0$ there exist constants $R(\epsilon)$, $N(\epsilon)\geq0$ such that for all $x, y\in X$ with $d(x, y)\geq R(\epsilon)$, we have $|\Pqs_G^{\epsilon}|\leq N(\epsilon)$. 
\end{definition}
We have the following ways in which elements of a group can act upon a hyperbolic space.
\begin{definition}\label{def:isotype}
    Let $G$ be a group acting on a hyperbolic space $X$ by isometry, and let $g\in G$. Then we say that
    \begin{itemize}
        \item $g$ is \emph{elliptic} if $g$ has bounded orbits;
        \item $g$ is \emph{loxodromic} if for any $x\in X$ the map $\mathbb{Z}\rightarrow X$ given by $n\mapsto g^{n}x$ is a \qi embedding;
    \end{itemize}
\end{definition}
We will require some results pertaining to acylindrically hyperbolic groups, which we will include without proof for brevity. We first have the following classification theorem due to Osin. A similar classification for generic actions on hyperbolic spaces was originally proved by Gromov in \cite[Section~8]{Gromov87}, with more possibilities, but these may be disregarded as they will not occur in the acylindrical case.
\begin{theorem}
    \textit{\cite[Theorem~1.1]{Osin13}}\label{Thm:categorisedActions} Let $G$ be a group acting acylindrically on a hyperbolic space $X$. Then $G$ satisfies exactly one of the following conditions.
    \begin{enumerate}
        \item The orbit of any element $x\in X$ under the action of $G$ is bounded. In this case we say that the action of $G$ on $X$ is \emph{elliptic}.
        \item The group $G$ is virtually cyclic and contains at least one loxodromic element. In this case we say that the action of $G$ on $X$ is \emph{lineal}.
        \item The group $G$ contains infinitely many independent loxodromic elements. In this case we say that action of $G$ on $X$ is \emph{non-elementary}.
    \end{enumerate}       
\end{theorem}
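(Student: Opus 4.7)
\emph{Plan.} The plan is to combine the standard Gromov classification of isometric actions on a Gromov hyperbolic space with acylindricity in order to eliminate the types not appearing in the trichotomy. Recall that any isometric action of a group on a hyperbolic space $X$ falls into exactly one of five mutually exclusive types: \emph{elliptic} (all orbits bounded); \emph{parabolic} (unbounded orbits, no loxodromic, and a unique common fixed point on the Gromov boundary $\partial X$); \emph{lineal} (a common pair of boundary fixed points which are the endpoints of every loxodromic axis); \emph{focal} (a unique common fixed point on $\partial X$ but with loxodromic elements); or \emph{general type} (at least two independent loxodromic elements). Conditions (1), (2), (3) of the statement correspond respectively to elliptic, lineal (after upgrading to virtual cyclicity), and general type (after upgrading to infinitely many independent loxodromics), so the tasks reduce to (a) ruling out parabolic and focal actions under acylindricity, (b) forcing virtual cyclicity in the lineal case, and (c) promoting two independent loxodromics to infinitely many in the general type case.

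For (a), consider a parabolic action with common fixed point $\xi\in\partial X$, fix a parabolic element $g$ and a basepoint $x_0$; the iterates $g^k x_0$ lie within bounded distance of a horosphere centred at $\xi$, and the displacements $d(x_0, g^k x_0)$ tend to infinity. For large $N$, set $p=g^{-N}x_0$ and $q=g^Nx_0$, so that $d(p,q)\geq R(\epsilon)$. Because $g$ shifts any fixed horosphere around $\xi$ by only a bounded horocyclic amount while $p$ and $q$ both lie near the same horosphere as their translates $g^j p$ and $g^j q$ for $|j|\leq N$, each of the $2N+1$ distinct elements $g^j$ with $|j|\leq N$ moves $p$ and $q$ by at most some $\epsilon$ depending only on $\delta$ and the horocyclic translation length of $g$. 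Choosing $N$ large enough that $2N+1 > N(\epsilon)$ contradicts $|\Pqs_G^\epsilon(\{p,q\})|\leq N(\epsilon)$. The focal case is eliminated analogously: if $g$ is loxodromic with axis endpoints $\xi$ and $\xi_-$ and $h\in G$ does not fix $\xi_-$ (which must exist if the action is not lineal), then elements of the form $g^n h g^{-n}$ accumulate too many near-stabilisers of pairs of points along the axis of $g$.

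For (b), the action preserves $\{\xi_-,\xi_+\}\subset\partial X$ setwise, so an index at most $2$ subgroup $G_0\leq G$ fixes both points pointwise. Each element of $G_0$ preserves a quasi-axis between $\xi_-$ and $\xi_+$ up to bounded Hausdorff distance, defining a translation-length homomorphism $G_0\to\mathbb{R}$; its kernel pointwise $\epsilon$-quasi-stabilises arbitrarily long segments of the quasi-axis and is therefore finite by acylindricity, while its image is either trivial or infinite cyclic, so $G$ is virtually cyclic. For (c), starting from two independent loxodromics $a,b\in G$, the conjugates $c_n=b^{-n}ab^n$ have axes which are the $b^{-n}$-translates of the axis of $a$; since $b$ does not fix the endpoint pair of $a$, these axes are pairwise distinct, giving infinitely many pairwise independent loxodromics. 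The main obstacle is the rigorous ruling-out of the parabolic and focal types: the intuitive horocyclic translation argument needs careful quantitative control using Busemann functions and the $\delta$-hyperbolicity constant of $X$, and this is where the genuinely new content of Osin's theorem lies.
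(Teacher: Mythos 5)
The paper does not prove this statement: it is quoted verbatim from Osin with the explicit remark that such results are ``included without proof for brevity,'' so there is no in-paper argument to compare against. Your outline does follow the same strategy as Osin's original proof --- run the Gromov classification of actions on hyperbolic spaces (elliptic, horocyclic/parabolic, lineal, focal, general type) and use acylindricity to kill the two ``bad'' types and to upgrade the lineal and general-type cases --- so the overall architecture is right. However, there is a genuine gap at exactly the point you flag as the hard one. Your elimination of the parabolic (horocyclic) case begins by fixing ``a parabolic element $g$'' whose displacements $d(x_0,g^kx_0)$ tend to infinity. A horocyclic action need not contain any such element: under acylindricity every single element is elliptic or loxodromic, and a horocyclic action has no loxodromics, so in the case you must rule out \emph{every element is elliptic} while the full group orbit is unbounded (the model to keep in mind is an increasing union of elliptic subgroups, as happens for non-acylindrical actions of infinite locally finite groups). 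Your horosphere-translation argument then has no element to apply to, and the actual content of Osin's theorem --- that an acylindrical action with unbounded orbits must contain a loxodromic element --- is precisely the statement your sketch assumes rather than proves. The focal case has a related soft spot: the elements $g^nhg^{-n}$ are just conjugates of $h$ and do not by themselves produce large pointwise quasi-stabilisers; the standard argument instead manufactures, from two loxodromics sharing exactly one endpoint, elements that translate arbitrarily little along long segments near the common fixed point, and this needs to be set up explicitly.

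Two smaller issues. In the lineal case, the image of the Busemann/translation-length homomorphism $G_0\to\mathbb{R}$ is a priori only a subgroup of $\mathbb{R}$ and could be dense; to conclude it is trivial or infinite cyclic you need the uniform positive lower bound on translation lengths of loxodromic elements that acylindricity provides (Bowditch's lemma), which you should invoke. In the general-type case, the conjugates $c_n=b^{-n}ab^n$ having pairwise distinct axes is weaker than their being pairwise \emph{independent}: two of them could share one boundary fixed point. One either argues that $\mathrm{Fix}(c_n)\cap\mathrm{Fix}(c_m)\neq\emptyset$ would force $b^{n-m}$ to interact with $\mathrm{Fix}(a)$ in a way contradicting independence of $a$ and $b$, or uses the classical ping-pong construction; either way the step needs a sentence of justification rather than an assertion.
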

\begin{definition}
    Let $G$ be a group. We say that $G$ is \emph{acylindrically hyperbolic} if $G$ admits a non-elementary acylindrical action on a hyperbolic space $X$.
\end{definition}

\begin{example} It is well known that any finitely generated hyperbolic group acts acylindrically on its Cayley graph, which is a hyperbolic metric space. This action will have bounded orbits if and only if $G$ is finite, and so if $G$ is an infinite and non-virtually cyclic hyperbolic group it must be acylindrically hyperbolic by Theorem~\ref{Thm:categorisedActions}. 

Further examples include the mapping class groups MCG$(\Sigma_{g, p})$ of closed surfaces of genus $g$ with $p$ punctures unless $g=0$ and $p\leq 3$, $\operatorname{Out}(F_n)$ for $n\geq 2$ and one-relator groups with at least three generators (see \cite[Section~8]{Osin13}, for example).
\end{example}

\subsection{Groups Acting on Trees}
We recall some graph theoretical notation, which we will use throughout this paper.
\begin{definition}\label{def:graph}
    Let $\Gamma=(V(\Gamma), E(\Gamma))$ be a graph. Then $\Gamma$ is said to be\emph{finite} if $|V(\Gamma)|<\infty$, and we say that $\Gamma$ is \emph{simple} if $E(\Gamma)$ contains no loops or multiedges.

    For a vertex $v$ of a finite simple graph $\Gamma$ we define the \emph{link}, denoted $\lk_\Gamma(v)$, of $v$ to be the set of vertices $u\in V(\Gamma)\backslash\{v\}$ such that there exists an edge $e\in E(\Gamma)$ incident on both $u$ and $v$. For a subset $A$ of $V(\Gamma)$ we define $\lk_\Gamma(A)$ to be the intersection $\lk_\Gamma(A)=\bigcap_{v\in A}\lk_\Gamma(v)$. We define the \emph{neighbourhood} of a vertex $v\in V(\Gamma)$ to be $N_\Gamma(v)=\lk_\Gamma(v)\cup v$, and the neighbourhood of a set of vertices $A$ to be the union 
    $N_\Gamma(A)=\bigcup_{v\in A}N(v)$.
\end{definition} 

\begin{example} We will refer to the following standard collections of graphs.
\begin{enumerate}
        \item We say that $\Gamma=(V, E)$ is a \emph{complete} graph if $E$ contains every possible unordered pair of distinct elements in $V$, and we say that $\Gamma$ is \emph{discrete} if the edge set $E$ is empty. If $|V|=n$ we denote these graphs as $K_n$ and $O_n$ respectively.
        \item We define the \textit{$n$-path} $P_n$ for $n\geq 2$ to be the unique (up to isomorphism) connected graph on $n$ vertices with $n-1$ edges and maximum vertex degree two, and the \textit{$n$-cycle} $C_n$ for $n\geq 3$ to be the unique (up to isomorphism) connected graph on $n$ vertices with $n$ edges such that the degree of every vertex is two. 
    \end{enumerate}
\end{example}
\begin{definition}\label{def:diam}
    Let $\Gamma=(V, E)$ be a finite graph. For a graph $\Gamma=(V, E)$ we define the \emph{graph theoretical diameter} $\diam(\Gamma)$ to be the metric diameter of the $V(\Gamma)$ when endowed with the edge metric. We will therefore have that $\diam(\Gamma)=\infty$ if and only if $\Gamma$ is disconnected.

    For a subset $A\subseteq\Gamma$ we define the diameter $\diam(A)$ to be the diameter of the subgraph of $\Gamma$ induced by $A$, which can be infinite even if $\Gamma$ had finite diameter.
\end{definition}
\begin{remark}
    The graph theoretical diameter will often differ from the metric diameter of the entire graph. For example, the metric diameter of the cycle $C_5$ is $2.5$, but $\diam(C_5)=2$.
\end{remark}

We will assume the reader has some familiarity with Bass--Serre theory, and for a more detailed discussion we refer to \cite{stilwell2002trees, dicks2011groups}. Let $(\Gamma, \mathfrak{G})$ be a \emph{graph of groups}, where $\Gamma$ is a connected directed graph that may not be finite or simple and $\mathfrak{G}$ is the following data:
\begin{itemize}
    \item To every vertex $v\in V(\Gamma)$ we assign a \emph{vertex group} $G_v$, and to every edge $e\in E(\Gamma)$ we assign an \emph{edge group} $G_e$;
    \item To every edge $e\in E(\Gamma)$ we assign monomorphisms \[d_0:G_e\rightarrow G_{i(e)}\text{ and }d_1:G_e\rightarrow G_{t(e)},\] where $i(e)$ and $t(e)$ are the initial and terminal vertices of $e$ in $\Gamma$ respectively.
\end{itemize}

We will use a slight abuse of notation to consider each vertex group $G_v$ as a subgroup of the fundamental group $\pi_1(\Gamma, \mathfrak{G})$ along the natural inclusion. Similarly, we will consider each edge group $G_e$ to be the subgroup of the fundamental group given by the image of $d_0(G_e)$ in the vertex group $G_{i(e)}$. We call a graph of groups \emph{trivial} if there exists some $v\in V(\Gamma)$ such that $G_v= \pi_1(\Gamma, \mathfrak{G})$, or \emph{non-trivial} otherwise. We say that a graph of groups $(\Gamma, \mathfrak{G})$ is a \emph{graph of groups decomposition}  or \emph{splitting} of a group $G$ if the fundamental group $\pi_1(\Gamma, \mathfrak{G})$ is isomorphic to $G$. We denote by $T(\Gamma, \mathfrak{G})$ the \emph{Bass--Serre tree} associated to the splitting, on which $G$ acts naturally by isometry with respect to the edge metric and without inversion \cite[Section~I.5.3]{stilwell2002trees}.

The assumption that any action on a tree is simplicial and without inversion is easy to guarantee, so we will assume from now on that all actions on trees are by simplicial isometry and without inversion.

As in \cite[Section~I.5.4]{stilwell2002trees}, an action on a tree will give rise to a \emph{quotient graph of groups decomposition $(T/G, \mathfrak{G}$)} of $G$, where the vertex or edge group of a vertex or edge of $T/G$ is defined to have the isomorphism type of the stabiliser of any preimage of that vertex or edge in $T$, and the edge monomorphisms are defined similarly.

The following lemma is a restating of \cite[Proposition~I.2.10]{stilwell2002trees} and its corollary which allows us to remove any ambiguity as to the definition of an elliptic action in the context of simplicial trees and which we include without proof.
\begin{lemma}\label{lem:elipticWellDef}
    Let $G$ be a group acting on a tree $T$. Then the following conditions are equivalent:
    \begin{enumerate}
        \item The $G$-orbit of at least one point $x\in T$ is bounded in $T$; \item The $G$-orbit of every point in $x\in T$ is bounded in $T$; and
        \item There exists some point $x\in T$ fixed by the action of $G$ on $T$.
    \end{enumerate}
\end{lemma}

We use this to make the following definition.
\begin{definition}
    Let $G$ be a group acting on a tree $T$. We say that $G$ is acting \emph{elliptically}, or that $G$ is \emph{elliptic}, if the action of $G$ on $T$ has a fixed point.

    We say that a subgroup $H\leq G$ acts \emph{elliptically} on $T$, or that $H$ is an \emph{elliptic} subgroup, if the induced action of $H$ on $T$ is elliptic. Similarly, we say that an element $g\in G$ acts \emph{elliptically} on $T$, or that $g$ is an \emph{elliptic} element, if $\langle g\rangle\leq G$ is an elliptic subgroup. 
\end{definition}

It follows from the fundamental theorem of Bass--Serre theory that an action on a tree is elliptic if and only if the quotient graph of groups is trivial.
\begin{remark}
    In the context of acylindrical actions on trees these definitions will agree with those in Definition~\ref{def:isotype} and Theorem~\ref{Thm:categorisedActions} by the above lemma, although it is important to note that in general elliptic acylindrical actions on hyperbolic spaces (in the sense of Theorem~\ref{Thm:categorisedActions}) need not have fixed points.
\end{remark}

We will refer to the following well-known result from Bass--Serre Theory without proof.
\begin{theorem}
    \label{thm:eliptGen} \textit{\cite[Corollary 2 of Theorem I.6.26]{stilwell2002trees}} Let $G$ be a group generated by a finite number of elements $s_1,...,s_m$ and let $T$ be a tree on which $G$ acts by isometry. Assume further that for all distinct pairs $1\leq i, j\leq m$ we have that $s_i, s_j$ and $s_is_j$ act elliptically on $T$. Then $G$ must act elliptically on $T$.
\end{theorem}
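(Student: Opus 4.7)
The plan is to show that the fixed-point sets of the generators $s_1, \ldots, s_m$ share a common point $x \in T$; such an $x$ is then fixed by all of $G$, giving that $G$ acts elliptically. Recall that by Lemma~\ref{lem:elipticWellDef} any elliptic element $g$ has a non-empty fix set $\operatorname{Fix}(g) = \{y \in T : gy = y\}$, which is easily seen to be a subtree of $T$.

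The key geometric input is the following standard tree-theoretic lemma: if $g$ and $h$ are elliptic isometries of $T$ whose fixed subtrees $F_g = \operatorname{Fix}(g)$ and $F_h = \operatorname{Fix}(h)$ are disjoint, then $gh$ is loxodromic, with translation length $2\,d(F_g, F_h) > 0$. I would prove this by taking the unique bridge $[p, q]$ between $F_g$ and $F_h$ (with $p \in F_g$, $q \in F_h$) and verifying directly that the bi-infinite path obtained by concatenating the $(gh)$-translates of $g[q, p] \cup [p, q]$ is a geodesic line along which $gh$ translates by $2\,d(p, q)$; the key observation is that because $h$ fixes $q$ but moves $p$, the segment $h[p,q]$ meets $[p, q]$ only at $q$, and symmetrically for $g$. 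In particular, $gh$ cannot be elliptic. Applied contrapositively, the hypothesis that $s_i$, $s_j$, and $s_i s_j$ all act elliptically forces $\operatorname{Fix}(s_i) \cap \operatorname{Fix}(s_j) \neq \emptyset$ for every pair $(i, j)$.

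It remains to upgrade these pairwise intersections to a single common point. For this I invoke the Helly property for subtrees of a tree: any finite collection of pairwise-intersecting subtrees of a tree has non-empty common intersection. The case of three subtrees follows immediately from the tripod shape of the convex hull of three points in a tree, and the general case follows by a short induction reducing to the three-subtree case. Applying Helly to the finite family $\{\operatorname{Fix}(s_1), \ldots, \operatorname{Fix}(s_m)\}$ produces a point fixed by every $s_i$, and hence by all of $G$.

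The only non-routine ingredient is the bridge lemma for two elliptic elements with disjoint fix sets; this is where all of the tree geometry lies, and the remainder is a clean combinatorial assembly of this fact with the Helly property.
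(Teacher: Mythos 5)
Your argument is correct, but note that the paper does not prove this statement at all: it is quoted without proof as Corollary 2 of Theorem I.6.26 in the cited translation of Serre's \emph{Trees}, and your proof is essentially the classical one given there. The three ingredients you isolate --- fixed-point sets of elliptic isometries are nonempty subtrees, the bridge lemma showing that two elliptic isometries with disjoint fixed trees have loxodromic product of translation length $2\,d(\operatorname{Fix}(g),\operatorname{Fix}(h))$, and the Helly property for finitely many pairwise-intersecting subtrees via the median of a tripod --- are exactly the standard ones, and your sketches of each are sound.
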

The following corollary is immediate.
\begin{corollary} \label{cor:eliptgen}
	Let $G$ be a group acting on a tree $T$ generated by an arbitrary set of elements $S$. Assume further that for all distinct pairs $s_i, s_j\in S$ we have that $s_i, s_j$ and $s_is_j$ act elliptically on $T$. Then the action of $G$ on $T$ has no loxodromic elements.
\end{corollary}
Note that this will hold even in the case when $S$ is uncountable, as any loxodromic element would have to appear as a finite word over the elements of $S$, so would sit in a finitely generated subgroup where we can apply Theorem~\ref{thm:eliptGen}.

By Theorem~\ref{Thm:categorisedActions}, Corollary~\ref{cor:eliptgen} implies that if the action is also acylindrical then $G$ must act elliptically, so must have a global fixed point.

We have the following definition due to Serre \cite[Section~I.6.1]{stilwell2002trees}.
\begin{definition}\label{def:FA}
    We say that a group $G$ has \emph{property (FA)} if every action of $G$ on a tree is elliptic, or equivalently, if every graph of groups decomposition of $G$ is trivial.
\end{definition}
In the context of acylindrical arboreality lineal actions, those actions that fix bi-infinite geodesics setwise, are also prohibited. This leads us to the following definition.
\begin{definition}
    We say that $G$ has the \emph{weakened (FA) property, (FA$^-$)} if for every action of $G$ on some tree $T$ we either have that $G$ acts elliptically on $T$ or that the action of $G$ on $T$ fixes some bi-infinite geodesic $L\subseteq T$ setwise.
\end{definition}
\subsection{Acylindrically Arboreal Groups}\label{sec:AAandAE}
In this section we formally define acylindrical arboreality and obtain some initial results.
\begin{definition} \label{def:AA} We say that a group $G$ is \emph{acylindrically arboreal} if $G$ acts acylindrically non-elementarily on some tree $T$. This action will give rise to a quotient graph of groups decomposition of $G$, which we will call a \emph{non-elementary acylindrical} splitting of $G$. 
\end{definition}

Any acylindrically arboreal group is acylindrically hyperbolic, as any simplicial tree is a $0$-hyperbolic metric space, but working with a simplicial tree allows us to use a much more combinatorial definition.

\begin{definition}\cite[Introduction]{Weidmann07}\label{Def:kc} Let $G$ be a group acting on some tree $T$ and let $k\geq0$ and $C>0$ be integers. We say that the action of $G$ on $T$ is $(k, C)$\emph{-acylindrical} if the pointwise stabiliser of any edge path in $T$ of length at least $k$ contains at most $C$ elements.
\end{definition}

The following theorem is essentially due to Minasyan and Osin \cite{minasyanOsin13}. We include a brief proof demonstrating how to use the cited lemma to obtain this result.
\begin{theorem}\cite[Lemma~4.2]{minasyanOsin13}\label{Thm:KCisAA} Let $G$ be a group acting by isometries on a tree $T$. This action is acylindrical (in the sense of Definition~\ref{def:acyl}) if and only if there exist constants $k\geq 0$ and $C\geq 1$ such that the action of $G$ on $T$ is $(k, C)$-acylindrical.
\end{theorem}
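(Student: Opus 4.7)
The plan is to prove the two directions separately. For ``Bowditch-acylindrical implies $(k,C)$-acylindrical for some $k, C$,'' apply Definition~\ref{def:acyl} with $\epsilon = 0$ to obtain $R(0)$ and $N(0)$, and set $k := \lceil R(0) \rceil$ and $C := N(0)$. Then the pointwise stabiliser of any edge path $P$ of length $\geq k$ must fix the endpoints $p, q$ of $P$ (with $d(p,q) \geq k \geq R(0)$), and since any isometry of $T$ fixing two points necessarily fixes the geodesic between them pointwise, this stabiliser is contained in $\Pqs_G^0(\{p, q\})$, which has at most $N(0) = C$ elements.

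For the converse, given $\epsilon \geq 0$ I set $R(\epsilon) := k + 4\epsilon + 2$ and take $x, y \in T$ with $d(x, y) \geq R(\epsilon)$ and an arbitrary $g \in \Pqs_G^\epsilon(\{x, y\})$. The standard tree identity $d(z, gz) = \ell(g) + 2 d(z, C_g)$, where $\ell(g)$ is the translation length and $C_g$ denotes the fix set (if $g$ is elliptic) or the axis (if $g$ is loxodromic), gives $\ell(g) \leq \epsilon$ and $d(x, C_g), d(y, C_g) \leq \epsilon/2$. Projecting $x, y$ onto $C_g$ along $[x,y]$ then shows that $C_g$ contains the subsegment $[u, v] \subseteq [x, y]$ determined by $d(x, u) = d(y, v) = \epsilon$, of length $d(u, v) \geq k + 2\epsilon + 2$. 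Consequently the restriction of $g$ to $[u, v]$ is entirely determined by the signed integer translation $\tau(g) \in \mathbb{Z} \cap [-\lfloor \epsilon \rfloor, \lfloor \epsilon \rfloor]$ of $g$ along the oriented segment $[u, v]$, with $\tau(g) = 0$ precisely when $g$ is elliptic.

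Now pick an inner subsegment $[u', v'] \subseteq [u, v]$ with $d(u, u') = d(v, v') = \epsilon$, so that $d(u', v') \geq k + 2$ and any shift along $[u, v]$ by at most $\epsilon$ keeps $[u', v']$ inside $[u, v]$. For any $g \in \Pqs_G^\epsilon(\{x, y\})$ with $\tau(g) = \tau$ and any $w \in [u', v']$, the image $g(w)$ is the unique point of $[u, v]$ at signed distance $\tau$ from $w$, and this description depends only on $\tau$, not on $g$ individually. Hence whenever $g_1, g_2 \in \Pqs_G^\epsilon(\{x, y\})$ satisfy $\tau(g_1) = \tau(g_2)$, the product $g_2^{-1} g_1$ pointwise fixes $[u', v']$, which contains an edge path of length $\geq k$; the $(k, C)$-acylindricity assumption then yields at most $C$ such products. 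Summing over the $2 \lfloor \epsilon \rfloor + 1$ possible values of $\tau$ gives $|\Pqs_G^\epsilon(\{x, y\})| \leq (2 \lfloor \epsilon \rfloor + 1) C =: N(\epsilon)$.

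The main obstacle I anticipate is that the axes of distinct loxodromic elements of $\Pqs_G^\epsilon(\{x, y\})$ may diverge outside the common segment $[u, v]$, so their global actions cannot be directly compared. Taking $[u', v']$ strictly interior to $[u, v]$ resolves this: every image $g(w)$ for $w \in [u', v']$ stays inside $[u, v]$, the common piece of all relevant axes, so the restriction of $g$ to $[u', v']$ is captured entirely by the signed translation $\tau(g)$, enabling the counting argument.
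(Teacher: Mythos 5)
Your argument is correct, but note that the paper does not actually prove this statement: it is imported verbatim from Minasyan--Osin \cite[Lemma~4.2]{minasyanOsin13} and explicitly ``included here without proof,'' so there is no in-paper argument to compare against. Your write-up is essentially the standard proof (and close in spirit to the cited one). The forward direction, specialising Definition~\ref{def:acyl} to $\epsilon=0$, is immediate. The converse is where the content lies, and your handling is sound: the one external ingredient is the identity $d(z,gz)=\ell(g)+2d(z,C_g)$ for isometries of trees (Culler--Morgan; for simplicial actions without inversion this is in Serre), from which $\ell(g)\leq\epsilon$ and $d(x,C_g),d(y,C_g)\leq\epsilon/2$ follow, and convexity of $C_g$ then forces $C_g$ to contain the middle subsegment $[u,v]$ of $[x,y]$. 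Your device of retreating to the further-inset segment $[u',v']$ so that all images $g(w)$ land in the common piece $[u,v]$ of the various axes is exactly the right fix for the divergence issue you flag, and the coset count (at most $C$ elements per value of the signed translation $\tau\in\{-\lfloor\epsilon\rfloor,\dots,\lfloor\epsilon\rfloor\}$, since two elements with equal $\tau$ differ by something pointwise fixing an edge path of length at least $k$ inside $[u',v']$) gives $N(\epsilon)=(2\lfloor\epsilon\rfloor+1)C$ as claimed; the $+2$ in $d(u',v')\geq k+2$ correctly guarantees a genuine edge path of length $k$ between lattice points. Two minor points worth making explicit: the simpliciality of the action is what makes $\tau(g)$ an integer and bounds it by $\lfloor\epsilon\rfloor$ rather than $\epsilon$; and in the forward direction you are implicitly reading ``edge path of length $k$'' as a geodesic segment (so that its endpoints are at distance $k$), which is the standard convention in Definition~\ref{Def:kc} and the only one under which the equivalence can hold.
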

\begin{proof}
    As in the statement, let $G$ be a group acting by isometries on a tree $T$. 

First assume that this action is acylindrical with constants $R(\epsilon)$ and $N(\epsilon)$ as in Definition~\ref{def:acyl}. The action of $G$ on $T$ must be $(\lceil R(0)\rceil, N(0))$-acylindrical by definition of acylindricity.

For the other direction assume that the action of $G$ on $T$ is $(k, C)$-acylindrical for some constants $k$ and $C$. Let $\epsilon>0$, and set \[R(\epsilon)=k+2\epsilon+6\text{, }N(\epsilon)=2(2\epsilon+1)C.\]

Let $x, y\in T$ such that $d_T(x, y)\geq R(\epsilon)$, and let $u$ and $v$ be the closest vertices to $x$ and $y$ respectively on the geodesic $[x, y]$ such that $u$ and $v$ are at distance at least $\epsilon+1$ from $x$ and $y$ respectively. Thus the distance from $x$ to $u$ is bounded above by the distance from $x$ to the nearest vertex on $[x, y]$ plus $\lceil1+\epsilon\rceil=1+\lceil\epsilon\rceil$, so 
\[1+\epsilon\leq d_T(u, x)\leq 1+1+\lceil\epsilon\rceil\leq 3+\epsilon.\]
The distance $d_T(v, y)$ is similarly bounded, and by the triangle inequality \[d_T(u, y)\geq d_T(x, y)-d_T(u, x)\geq k+\epsilon+5>\epsilon.\] Similarly $d_T(v, x)>\epsilon$, and again by the triangle inequality we have that
\[d_T(u, v)\geq d_T(x, y)-d_T(u, x)-d_T(v, y)\geq k,\]
so it follows that $d_T(u, v)\geq k$ and $d_T(\{u, v\}, \{x, y\}) >\epsilon,$
and we can invoke \cite[Lemma~4.2]{minasyanOsin13} to see that the pointwise quasi-stabiliser $\Pqs_G^\epsilon(\{x, y\})$ is contained in at most $2(2\epsilon+1)$ cosets of $\Ps_G(\{u, v\})$, the pointwise stabiliser of $\{u, v\}$. By construction the geodesic path between $u$ and $v$ must use at least $k$ edges and so $|\Ps_G(\{u, v\})|\leq C$ by definition of definition of $k$ and $C$. It follows that $|\Pqs_G^\epsilon(\{x, y\})|\leq 2(2\epsilon+1)C=N(\epsilon)$. Thus our action was acylindrical with constants $R(\epsilon)$ and $N(\epsilon)$.
\end{proof}
The class of acylindrically arboreal groups has the following set of inheritance properties, claim (3) of which follows immediately from a restriction of \cite[Lemma~7.1]{Osin13} to actions on trees.
\begin{lemma}\label{lem:inheritance}
    Let $G$ be an acylindrically arboreal group. Then the following hold.
    \begin{enumerate}
        \item Any extension of $G$ along a finite kernel must be acylindrically arboreal.
        \item Any quotient of $G$ by a finite normal subgroup must be acylindrically arboreal.
        \item \cite[Lemma~7.1]{Osin13} Let $H\leq G$ be \emph{s-normal}, i.e. for all $g\in G$, $gHg^{-1}\cap H$ is infinite. Then $H$ is acylindrically arboreal. In particular, any finite index subgroup of $G$ is acylindrically arboreal.
    \end{enumerate}
\end{lemma}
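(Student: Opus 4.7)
The plan is to reduce acylindricity to the concrete $(k,C)$-condition of Definition~\ref{Def:kc} via Theorem~\ref{Thm:KCisAA}, and then, for each part, to exhibit a tree on which the relevant group acts $(k,C)$-acylindrically and contains an independent pair of loxodromic elements.

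For (1), let $1\to K\to G'\xrightarrow{\pi}G\to 1$ with $K$ finite and fix a tree $T$ on which $G$ acts $(k,C)$-acylindrically and non-elementarily. Pulling back along $\pi$ gives an action $G'\acts T$. The pointwise $G'$-stabiliser of any edge path of length at least $k$ is the $\pi$-preimage of the corresponding pointwise $G$-stabiliser, so has cardinality at most $C|K|$, giving $(k,C|K|)$-acylindricity. Any loxodromic $g\in G$ lifts to a loxodromic element of $G'$ with the same axis, so an independent pair of loxodromics in $G$ lifts to an independent pair in $G'$.

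For (2), let $N\trianglelefteq G$ be finite. By Lemma~\ref{lem:elipticWellDef} the fixed subtree $T^N$ is non-empty, and it is $G$-invariant because $N$ is normal, so $G/N$ acts on $T^N$. This induced action remains $(k,C)$-acylindrical since the pointwise $G/N$-stabiliser of a path in $T^N$ is the image under $G\to G/N$ of the corresponding pointwise $G$-stabiliser. The delicate step is non-elementarity, which I would establish by showing that the axis of every loxodromic $g\in G$ lies in $T^N$. Given any $h\in N$, the conjugation action of $\langle g\rangle$ on the finite set $N$ has finite image, so some power $g^m$ centralises $h$; then $g^m$ preserves the non-empty subtree $T^h$, and since $g^m$ is loxodromic, its axis---which equals the axis of $g$---must lie in $T^h$. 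Varying $h$ over $N$ gives $\mathrm{axis}(g)\subseteq T^N$, and an independent pair of loxodromics in $G$ then descends to an independent pair for the $G/N$-action on $T^N$. This axis argument is the main obstacle of the proof: without finiteness of $N$ there is no reason for a power of $g$ to centralise $h$.

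For (3), the argument is a direct transcription of \cite[Lemma~7.1]{Osin13} to the tree case. Restricting the action of $G$ on $T$ to $H$ preserves acylindricity automatically. For non-elementarity, the standard limit-set argument carries over: because $gHg^{-1}\cap H$ is infinite for every $g\in G$, the limit sets satisfy $\Lambda(gHg^{-1}\cap H)=\Lambda(H)=\Lambda(gHg^{-1})=g\Lambda(H)$, so $\Lambda(H)$ is a non-empty $G$-invariant closed subset of $\partial T$; non-elementarity of the $G$-action forces $\Lambda(H)=\Lambda(G)$, which is infinite, so $H$ acts non-elementarily.
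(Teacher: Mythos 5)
The paper offers essentially no proof of this lemma: it asserts that (1) and (2) ``follow naturally by definition'' and that (3) is a restriction of the proof of \cite[Lemma~7.1]{Osin13} to trees. Your arguments for (1) and (2) are correct and supply genuine content where the paper has none. In particular, the fixed-subtree argument for (2) --- $T^N$ is a non-empty $G$-invariant subtree on which the action factors through $G/N$, the $(k,C)$-bound descends because pointwise stabilisers of paths in $T^N$ surject onto their images in $G/N$, and the axis of every loxodromic lies in $T^N$ because some power of $g$ centralises each $h\in N$ and a loxodromic preserving the non-empty subtree $\mathrm{Fix}(h)$ must have its axis inside it --- is exactly the right argument and correctly isolates where finiteness of $N$ is used. (Alternatively, in both (1) and (2) one could finish by citing Theorem~\ref{Thm:categorisedActions}: the induced action is acylindrical with unbounded orbits, and the relevant group is not virtually cyclic, so the action is non-elementary.)

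Part (3), however, has a genuine gap. The identity $\Lambda(gHg^{-1}\cap H)=\Lambda(H)$ is false for general infinite subgroups: an infinite subgroup $K$ of $H$ need not have $\Lambda(K)=\Lambda(H)$ (take $H$ a free group acting on its Cayley graph and $K$ a cyclic subgroup, whose limit set is two points while $\Lambda(H)$ is a Cantor set). More seriously, your argument never rules out the case that $H$ acts elliptically, i.e.\ fixes a point $p\in T$; there $\Lambda(H)=\emptyset$ and the limit-set chain says nothing, yet this is precisely where acylindricity and s-normality must interact. The missing argument is: if $H$ fixes $p$, pick a loxodromic $g\in G$ (which exists by non-elementarity); then $H\cap g^nHg^{-n}$ fixes both $p$ and $g^np$ and hence the path between them, which has length at least $k$ for $n$ large, so by $(k,C)$-acylindricity this intersection has at most $C$ elements, contradicting s-normality. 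Your limit-set computation is valid only in the remaining lineal case, where $H$ is virtually cyclic and every infinite subgroup of $H$ contains a loxodromic with the same two-point limit set; there $G$-invariance of this pair contradicts non-elementarity of $G$, and Theorem~\ref{Thm:categorisedActions} then forces $H$ to be non-elementary. You should restructure (3) as this case analysis rather than a single limit-set identity.
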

\begin{proof}
    We prove each claim separately. Let $G$ be an acylindrically arboreal group, so $G$ admits a non-elementary acylindrical action on a tree $T$.
    \begin{enumerate}
        \item Let $E$ be an extension of $G$ such that the natural projection $E\rightarrow G$ has finite kernel $K$. Then $E$ acts on $T$ through this quotient, and given that by Theorem~\ref{Thm:KCisAA} the action of $G$ on $T$ was $(k, C)$-acylindrical for some $k$ and $C$ we must have that the action of $E$ on $T$ is $(k, C|K|)$-acylindrical, and thus acylindrical in the sense of Definition~\ref{def:acyl} by Theorem~\ref{Thm:KCisAA}. The preimage of any loxodromic element in $G$ must be a loxodromic element in $E$ so the action of $E$ on $T$ is not elliptic, and $E$ cannot be virtually cyclic as $G$ was not virtually cyclic. Thus the action of $E$ on $T$ must be non-elementary by Theorem~\ref{Thm:categorisedActions}, so $E$ is acylindrically arboreal as claimed.
        \item Let $K$ be a finite normal subgroup of $G$. Then the action of $K$ on $T$ must have a fixed point $x$ by Lemma~\ref{lem:elipticWellDef}. Let $T'$ be the convex hull of the orbit of $x$ in $T$ under the action of $G$, on which $G$ still acts non-elementarily acylindrically. Then by the normality of $K$ it must fix the entire orbit of $x$ pointwise, and hence fix all of $T'$. The action of $G$ on $T'$ must therefore factor through the quotient $G/K$, and thus $G/K$ is acylindrically arboreal as claimed.
        \item This claim follows immediately from a restriction of the proof of \cite[Lemma~7.1]{Osin13} to the case of acylindrically arboreal groups.\qedhere 
    \end{enumerate}
\end{proof}
Further, the following lemma is simply a restriction of a result of Osin to the case of acylindrically arboreal groups.
\begin{lemma}\cite[Corollary~7.2]{Osin13}\label{lem:BrokenProducts} Let $G$ be a group acting acylindrically on a tree $T$, and let $H\cong H_1\times H_2$ be a subgroup of $G$ that decomposes as the direct product of two infinite groups. Then $H$ must act elliptically on $T$.
\end{lemma}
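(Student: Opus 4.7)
The induced action of $H$ on $T$ is itself acylindrical, and by Theorem~\ref{Thm:KCisAA} it is $(k,C)$-acylindrical for some $k\geq 0$ and $C\geq 1$. By Theorem~\ref{Thm:categorisedActions} this action is elliptic, lineal, or non-elementary, and my aim is to rule out the last two. If $H$ acts lineally then $H$ is virtually cyclic, so it contains a finite-index infinite cyclic subgroup $\langle c\rangle$; the intersections $H_i\cap\langle c\rangle$ have finite index in the infinite groups $H_i$, so both equal $\langle c^{a_i}\rangle$ for some $a_i\neq 0$, and then $c^{a_1 a_2}$ lies in $H_1\cap H_2=\{e\}$, contradicting that $c$ has infinite order.

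Assume now that $H$ acts non-elementarily and pick any loxodromic $h\in H$; write $h=h_1 h_2$ with $h_i\in H_i$, so that $h_1$ and $h_2$ commute. Let $L$ be the axis of $h$ and, after orienting $L$, write $\tau(g)\in\mathbb{Z}$ for the signed translation length along $L$ of any element of $G$ that preserves $L$ and acts on it as a translation; this $\tau$ is a homomorphism on the subgroup where it is defined. Since $h_1$ and $h_2$ commute with $h$ they preserve $L$ setwise, and since the centraliser of a non-trivial translation in the isometry group of $L$ consists only of translations, each of $h_1$ and $h_2$ acts on $L$ as a (possibly trivial) translation with $\tau(h_1)+\tau(h_2)=\tau(h)\neq 0$. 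Without loss of generality $h_1$ is loxodromic with axis $L$.

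The same reasoning applied to $h_1$ shows that every element of $H_2$ preserves $L$ and acts on it as a translation, so $\tau$ restricts to a homomorphism $H_2\to\mathbb{Z}$ with kernel $H_2\cap\mathrm{Fix}(L)$. By $(k,C)$-acylindricity the group $\mathrm{Fix}(L)$ has at most $C$ elements, so this kernel is finite; as $H_2$ is infinite, $\tau(H_2)$ is a non-trivial subgroup of $\mathbb{Z}$ and there exists a loxodromic $h_2'\in H_2$ with axis $L$. Setting $\alpha=\tau(h_1)$ and $\beta=\tau(h_2')$, both non-zero, the element $g:=h_1^\beta (h_2')^{-\alpha}$ satisfies $\tau(g)=0$ and so lies in the finite group $\mathrm{Fix}(L)\cap H$; hence $g^N=e$ for some $N\geq 1$. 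Using the direct-product decomposition $H=H_1\times H_2$ with $h_1\in H_1$ and $h_2'\in H_2$, this forces $h_1^{\beta N}=e=(h_2')^{\alpha N}$, contradicting the infinite order of loxodromic elements. The main technical obstacle is really this axis-sharing step, namely extracting a loxodromic element inside each factor along the \emph{same} axis; once that is in hand, the direct-product structure traps a non-trivial power in a finite subgroup and the contradiction follows.
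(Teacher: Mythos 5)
Your argument is correct, but it takes a genuinely different route from the paper, which offers no proof at all: it simply cites Osin's general result (\cite[Corollary~7.2]{Osin13}) that under any acylindrical action on a hyperbolic space a subgroup decomposing as a direct product of two infinite groups is elliptic, a fact whose proof runs through the general machinery of loxodromic elements in hyperbolic spaces. You instead give a self-contained, tree-specific argument, and its two substantive steps both check out: commuting with a loxodromic isometry of a tree forces setwise preservation of its axis $L$ and (since reflections do not centralise non-trivial translations of a line) an action on $L$ by translation, which lets you extract a loxodromic with axis $L$ inside each factor; and $(k,C)$-acylindricity makes the pointwise stabiliser of the bi-infinite geodesic $L$ uniformly finite, so the translation-length homomorphism traps $h_1^{\beta}(h_2')^{-\alpha}$ in a finite group, whereupon the direct-product structure forces a non-zero power of a loxodromic element to be trivial. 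Your separate disposal of the lineal case (a virtually cyclic group cannot be a direct product of two infinite groups) is also fine, though you could have folded it into the same axis argument, since a lineal action also supplies a loxodromic element of $H$. What your route buys is an elementary and transparent proof in the tree setting; what the paper's citation buys is brevity and the full generality of hyperbolic spaces, which is not needed here.
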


Finally, using the combinatorial definition of acylindricity we prove the following useful result.
\begin{lemma}\label{lem:AAimpliesFinite}
	Let $G$ be a finitely generated acylindrically arboreal group. Then there exists a non-elementary acylindrical splitting $(\Gamma, \mathfrak{G})$ of $G$ where $\Gamma$ has exactly one edge.
\end{lemma}
\begin{proof}
    Let $(\Gamma', \mathfrak{G}')$ be a non-elementary $(k, C)$-acylindrical splitting of $G$ with Bass--Serre tree $T'=T(\Gamma', \mathfrak{G}')$, and let $e\in E(T')$ be an edge in $T'$. We may assume that no proper subtree of $T'$ is fixed setwise by $G$. Let $G\cdot e$ be the $G$-orbit of $e$ in $T'$, and define a tree $T$ with 
    \[V(T)=\{v_U\colon U\subseteq (T'-(G\cdot e))\text{ is a connected component}\}, \] and an edge between any two vertices whose labels are joined by an edge of $G\cdot e$. 
    
    Then $G$ acts on $T$ with one orbit of edges, and there is a natural $G$-equivariant map $\phi:T'\rightarrow T$ which is a bijection when restricted to the interior of $G\cdot e$, and which is Lipschitz, i.e. for all $u, v\in V(T')$, $d_T(\phi(u), \phi(v))\leq d_{T'}(u, v)$. It thus follows from equivariance and the bijectivity of the restriction of $\phi$ above that the action of $G$ on $T$ is $(k, C)$-acylindrical.

    It only remains to show that this action is non-elementary. Since $G$ cannot be virtually cyclic by assumption we need only show that the action of $G$ on $T$ has no fixed point, but if the action of $G$ on $T$ were to have a fixed vertex $v_U$ then the action on $T'$ must fix the subtree $U\subseteq T'$ setwise contradicting our assumption on $T'$. The action of $G$ on $T$ is therefore non-elementary acylindrical as required.
\end{proof}
\section{Graph Products of Groups}\label{Sec:GP}
In this section we prove Theorem~\ref{Thm:mainIntro} and explore its consequences. We recall some definitions.
\begin{definition}
    Let $\Gamma=(V, E)$ be a finite simple graph, and let $\mathcal{G}=\{G_v\}_{v\in V}$ be a collection of groups enumerated by the vertex set of $\Gamma$. We define the \emph{graph product} $\mathbb{GP}(\Gamma, \mathcal{G})$ to be the group 
    $\langle \mathcal{G}\mid [G_u, G_v]$ for $\{u,v\}\in E\rangle$.
    Thus $\mathbb{GP}(\Gamma, \mathcal{G})$ is the quotient of the free product of the groups $\mathcal{G}$ by the normal closure of the commutators of those pairs of groups whose labels appear in the edge set of $\Gamma$.

    We call each group $G_v\in\mathcal{G}$ the \emph{vertex group} of $v$. We call a graph product of groups $\mathbb{GP}(\Gamma, \mathcal{G})$ \emph{degenerate} if it has any trivial vertex groups or if $\Gamma$ has only one vertex, and we say that $\mathbb{GP}(\Gamma, \mathcal{G})$ is \emph{non-degenerate} otherwise.
\end{definition}
If $\Gamma$ is complete we recover the direct product of the vertex groups, and if $\Gamma$ is discrete we recover the free product of the vertex groups. The graph product can therefore be considered as a generalisation of these two concepts.
\begin{example}
    Some of the most well studied examples of graph products are \emph{Right Angled Artin Groups} (RAAGs), where every vertex group is a copy of $\mathbb{Z}$, and \emph{Right Angled Coxeter Groups} (RACGs), where every vertex group is a copy of $\Z/2\Z$.
\end{example}

We will use a concept of standard form for an element of a graph product $G=\mathbb{GP}(\Gamma, \mathcal{G})$ originally formulated by Green \cite{Green1990GraphPO}, although our discussion will follow that of Antol\'in and Minasyan \cite[Section~2]{Antolinminasyan11}.

For any element $g\in G$ we can write $g$ as a \emph{word} $W=(g_1,...,g_n)$, where $g=g_1\cdot\cdot\cdot g_n$ and each $g_i$ is an element of some $G_v\in\mathcal{G}$. We will call each $g_i$ a \emph{syllable} of the word $W$. We say that $n$ is the \emph{length} of the word $W$. We say that two syllables $g_i$ and $g_j$ of $W$ with $i<j$ can be \emph{joined together} if either syllable is $1$ or if there exists some $G_v\in\mathcal{G}$ such that $g_i, g_j\in G_v$ and for all $i<k<j$, $g_k\in G_{u_k}$ with $u_k\in N_\Gamma(v)$. In such a case $g_j$ commutes with all such $g_k$ in $G$, so $W$ represents the same element of $G$ as the word \[(g_1,...,g_{i-1},g_ig_j,g_{i+1},...,g_{j-1}, g_{j+1},...,g_n)\], whose length is strictly smaller.

A word $W=(g_1,..., g_n)$ is called \emph{reduced} if it is empty or if $g_i\neq 1$ for all $i$ and no two distinct syllables of $W$ can be joined together.

Let $W=(g_1,...,g_n)$ be a (not necessarily reduced) word in a graph product $G=\mathbb{GP}(\Gamma, \mathcal{G})$. For consecutive syllables $g_i\in G_u$, $g_{i+1}\in G_v$ with $\{u, v\}$ an edge of $\Gamma$, we can interchange $g_i$ and $g_{i+1}$. This is known as \emph{syllable shuffling}.

We will refer to the following result of Green without proof.
\begin{theorem}\textit{\cite[Theorem~3.9]{Green1990GraphPO}}\label{Thm:StandardGP}
    Let $G=\mathbb{GP}(\Gamma, \mathcal{G})$ be a graph product. Then every element of $G$ can be represented by a reduced word. Moreover, if two reduced words represent the same element of $G$ then one can be obtained from the other by applying a finite sequence of syllable shuffling.
\end{theorem}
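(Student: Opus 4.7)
The plan is to use the classical van der Waerden / normal form approach: construct an action of the free product $\ast_{v\in V(\Gamma)} G_v$ on the set of reduced words modulo syllable shuffling, verify that it respects the defining commutator relations of $\mathbb{GP}(\Gamma, \mathcal{G})$, and so obtain an action of $G$ itself on this set. Uniqueness of normal form is then extracted by evaluating at the empty word. Let $\mathcal{W}$ denote the set of equivalence classes of reduced words under syllable shuffling. For each vertex $v\in V(\Gamma)$ and each $g\in G_v$ I will define a bijection $\sigma_g\colon \mathcal{W}\to\mathcal{W}$ and show that the assignment $g\mapsto \sigma_g$ extends to a homomorphism $\phi\colon G\to\mathrm{Sym}(\mathcal{W})$.

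Given a class $[W]\in\mathcal{W}$ represented by a reduced word $W=(g_1,\ldots,g_n)$ and an element $g\in G_v$, I say that $[W]$ \emph{begins with a syllable in} $G_v$ if there exists an index $i$ with $g_i\in G_v$ and $g_j\in G_{u_j}$ for some $u_j\in \lk_\Gamma(v)$ for all $j<i$; reducedness forces this index, when it exists, to be unique, since otherwise two $G_v$-syllables could be joined. Define $\sigma_g([W])$ as follows: if such an $i$ exists, shuffle $g_i$ to the front and replace it by $gg_i$, deleting it if $gg_i=1$; otherwise prepend $g$ to $W$. A short check shows that $\sigma_g$ is well-defined on classes and that $\sigma_{g^{-1}}$ is its inverse. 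The two relation checks are then case analyses: $\sigma_{g_1}\sigma_{g_2}=\sigma_{g_1g_2}$ for $g_1,g_2\in G_v$ follows from the uniqueness of the front $G_v$-syllable after shuffling, while $\sigma_g\sigma_h=\sigma_h\sigma_g$ for $g\in G_u$, $h\in G_v$ with $\{u,v\}\in E(\Gamma)$ follows because the adjacency of $u$ and $v$ means that the visibility of a front $G_u$-syllable and of a front $G_v$-syllable are independent of each other. This produces the desired homomorphism $\phi\colon G\to\mathrm{Sym}(\mathcal{W})$.

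Existence of a reduced representative for any $g\in G$ is immediate: write $g$ as any word in the generators and repeatedly join syllables; since each joining strictly reduces length, the process terminates at a reduced word. For uniqueness, suppose $W=(g_1,\ldots,g_n)$ is any reduced word representing $g$. A direct induction on $n$ shows that $\phi(g)[\emptyset]=\sigma_{g_1}\cdots\sigma_{g_n}[\emptyset]=[W]$: at each stage the reduced word built so far has no visible front $G_{v_i}$-syllable to merge with $g_i$ (again by reducedness of $W$), so each $\sigma_{g_i}$ simply prepends $g_i$. Consequently, if two reduced words $W_1$ and $W_2$ represent the same element $g\in G$, then $[W_1]=\phi(g)[\emptyset]=[W_2]$, which is exactly to say that $W_1$ and $W_2$ differ by a finite sequence of syllable shuffles. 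The main obstacle throughout is the careful bookkeeping in the definition of $\sigma_g$ and in the verification of commutation: one must be precise about what it means for a class to begin with a given vertex group's syllable, and check that all rewriting operations respect equivalence by shuffling.
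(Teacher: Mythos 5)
The paper does not prove this statement at all: it is imported verbatim from Green's thesis (``We will refer to the following result of Green without proof''), so there is no in-paper argument to compare against. Judged on its own terms, your van der Waerden--style argument --- build an action of $\ast_{v}G_v$ on shuffle-classes of reduced words, check the vertex-group and commutator relations, and read off uniqueness by evaluating at the empty word --- is the standard and correct route to this normal form theorem, and the key structural points are all in place: uniqueness of the ``front'' $G_v$-syllable, the observation that adjacency of $u$ and $v$ decouples the front-$G_u$ and front-$G_v$ operations, and the induction showing $\phi(g)[\emptyset]=[W]$ for $W$ reduced (which uses that no proper suffix of a reduced word exposes a front $G_{v_i}$-syllable that could absorb $g_i$). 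Two small points deserve explicit care if you write this out in full. First, $\sigma_g$ for $g=1$ must be declared to be the identity; your rule ``otherwise prepend $g$'' would produce a non-reduced word. Second, in the cancellation case of the relation $\sigma_{g_1}\sigma_{g_2}=\sigma_{g_1g_2}$ (when $g_2$ kills the front $G_v$-syllable $w_i$), you need to check both that the word with $w_i$ deleted is still reduced and that no \emph{new} front $G_v$-syllable becomes visible after the deletion --- otherwise $\sigma_{g_1}$ would act on the wrong syllable. Both facts do follow from reducedness of the original word (any syllable preceding position $i$ is labelled by a vertex in $\lk_\Gamma(v)$, hence adjacent to $v$, so deleting $w_i$ cannot create a new joinable pair; and a later $G_v$-syllable becoming visible would mean it was already joinable with $w_i$), but they are exactly the steps hiding inside your phrase ``a short check shows'' and should be spelled out.
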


Let $g\in G=\mathbb{GP}(\Gamma, \mathcal{G})$ and let $W=(g_1,...,g_n)$ be a reduced word representing $g$. We define the \emph{length} $|g|_\Gamma=n$ and the \emph{support} of $g$ to be \[\supp_\Gamma(g)=\{v\in V(\Gamma)\mid \text{there exists } i\in\{1,...,n\}\text{ such that }g_i\in G_v\}.\] For a subset $X\subset G$ we define $\supp_\Gamma(X)=\bigcup_{g\in X}\supp_\Gamma(g)$.

Finally, we define $\lv(g)$ and $\fv(g)$ to be the sets of all $u\in V(\Gamma)$ such that some reduced word for $g$ ends or begins with a syllable from $G_u$ respectively (here f and l stand for ``first'' and ``last''). All of these concepts are well defined by Theorem~\ref{Thm:StandardGP}.

\subsection{Full and Parabolic Subgroups of Graph Products of Groups}
We define a \emph{full} and a \emph{parabolic} subgroup of a graph of groups as follows.
\begin{definition} \label{def:ParaGP}
    Let $\mathbb{GP}(\Gamma, \mathcal{G})$ be a non-degenerate graph product of groups, and let $A\subset V(\Gamma)$. We define the \emph{full subgroup on $A$}, $G_A$, to be the subgroup of $G$ generated by the vertex groups associated to the vertices of $A$, and by convention define $G_\emptyset=\{1\}$. We say that a subgroup $H\leq G$ is \emph{parabolic} if it is conjugate to a full subgroup.
\end{definition}
\begin{remark} The full subgroup $G_A$ is always isomorphic to the graph product of groups given by the subgraph of $\Gamma$ induced by the vertex set $A$, with vertex groups inherited from $\Gamma$.
\end{remark}
This section will list  without proof a collection of results of Antol\'in, Minasyan and Osin \cite{Antolinminasyan11, minasyanOsin13} on the subject of full and parabolic subgroups that will provide us with the tools to study acylindrical splittings of graph products of groups.

\begin{lemma}\textit{\cite[Lemma~3.2]{Antolinminasyan11}}\label{lem:amLem3.2} Let $G=\mathbb{GP}(\Gamma,\mathcal{G})$ be a non-degenerate graph product of groups. Suppose $U\subseteq V(\Gamma)$ and $g, x, y$ are some elements of $G$ such that $x$ is non-trivial, $gxg^{-1}=y$, $\supp_\Gamma(y)\subseteq U$ and $\lv(g)\cap \supp_\Gamma(x)=\emptyset$. Then $g$ can be represented by a reduced word $(h_1,...,h_r, h_{r+1},...,h_n)$, where $h_1,..., h_r\in G_U$ and $h_{r+1},...,h_n\in G_{\lk_\Gamma(\supp_\Gamma(x))}$.
\end{lemma}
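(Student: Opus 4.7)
The plan is to induct on $|g|_\Gamma$. The base case $|g|_\Gamma=0$ is immediate: $g=1$ forces $y=x$, so $\supp_\Gamma(x)\subseteq U$ and the empty decomposition works. For the inductive step, fix a reduced word $g=g_1\cdots g_n$ with $g_i\in G_{u_i}$, and let $v=u_n$; then $v\in\lv(g)$, so $v\notin\supp_\Gamma(x)$ by hypothesis. I split into two cases based on whether $v\in\lk_\Gamma(\supp_\Gamma(x))$.

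In the easier Case A, when $v\in\lk_\Gamma(\supp_\Gamma(x))$, the syllable $g_n$ commutes with $x$, and so $g'=g_1\cdots g_{n-1}$ satisfies $g'xg'^{-1}=y$. I first verify $\lv(g')\cap\supp_\Gamma(x)=\emptyset$ by contradiction: an offending $w$ would let me extend a reduced form of $g'$ ending in $G_w$ by $g_n$ to a length-$n$ word for $g$ (necessarily reduced because $|g|_\Gamma=n$); since $v\in\lk_\Gamma(\{w\})$ and $v\neq w$, a syllable shuffle produces a reduced form of $g$ ending in $G_w$, contradicting $\lv(g)\cap\supp_\Gamma(x)=\emptyset$. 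Applying the inductive hypothesis to $g'$ then yields a decomposition, and appending $g_n\in G_v\subseteq G_{\lk_\Gamma(\supp_\Gamma(x))}$ extends it; the resulting length-$n$ word is automatically reduced.

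Case B, when $v\notin\lk_\Gamma(\supp_\Gamma(x))$, is more delicate. The key sub-claim is $v\in U$, which I establish by showing $v\in\supp_\Gamma(y)$. Since some $w\in\supp_\Gamma(x)$ fails to link with $v$, the syllable $g_n$ cannot commute through $x$, so the reduced form of $g_n x g_n^{-1}$ retains both $g_n$ and $g_n^{-1}$ as $G_v$-syllables. A careful bookkeeping through the outer conjugation $g'(g_n x g_n^{-1})g'^{-1}=y$ --- noting that any $G_v$-syllable of $g'$ brought adjacent to $g_n$ via shuffling would, upon joining with $g_n$, produce a strictly shorter expression for $g$ and contradict $|g|_\Gamma=n$, while $\lv(g)\cap\supp_\Gamma(x)=\emptyset$ rules out the remaining potential cancellations --- shows $g_n$'s contribution survives in the reduced form of $y$, so $v\in\supp_\Gamma(y)\subseteq U$. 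With $v\in U$ in hand, an adaptation of the Case A argument produces the full decomposition by placing $g_n$ into the $G_U$-prefix rather than the link-suffix.

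The main obstacle is the Case B sub-claim $v\in U$: Case A reduces to routine manipulation with the normal form of Theorem~\ref{Thm:StandardGP}, whereas Case B requires delicate tracking of $G_v$-syllables through the double conjugation, crucially combining $\lv(g)\cap\supp_\Gamma(x)=\emptyset$ with $v\notin\supp_\Gamma(x)$ to guarantee that at least one $G_v$-syllable survives all the way to the reduced form of $y$.
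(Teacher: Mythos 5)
The paper itself quotes this lemma from Antolin--Minasyan without proof, so there is no in-house argument to compare against; judging your attempt on its own merits: the base case and Case A are correct (the length-$n$ shuffle argument verifying $\lv(g')\cap\supp_\Gamma(x)=\emptyset$ is exactly right, and appending $g_n$ works because $v\in\lk_\Gamma(\supp_\Gamma(x))$), and the Case B sub-claim $v\in\supp_\Gamma(y)\subseteq U$ is true with your sketch pointing at the right ingredients. The genuine gap is the final sentence of Case B: ``an adaptation of the Case A argument'' does not exist in the form you need, because there is no pair to which the inductive hypothesis legitimately applies. You cannot use $(g',x)$, since $g'xg'^{-1}\neq y$ when $g_n$ fails to commute with $x$, so you have no control over $\supp_\Gamma(g'xg'^{-1})$. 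The only candidate is $(g',z)$ with $z=g_nxg_n^{-1}$, for which $g'zg'^{-1}=y$ does hold; but the hypothesis $\lv(g')\cap\supp_\Gamma(z)=\emptyset$ can fail. Concretely, let $\Gamma$ consist of two non-adjacent vertices $w,v$, so $G=G_w*G_v$, and take $g=h_1h_2$ with $1\neq h_1\in G_w$, $1\neq h_2\in G_v$, and $1\neq x\in G_w$. Then $\lv(g)=\{v\}$ is disjoint from $\supp_\Gamma(x)=\{w\}$ and you are in Case B with $g_n=h_2$, yet $\lv(g')=\lv(h_1)=\{w\}$ meets both $\supp_\Gamma(x)$ and $\supp_\Gamma(z)=\{v,w\}$, so the induction cannot be invoked. (The conclusion still holds there because $w$ also lies in $\supp_\Gamma(y)\subseteq U$ --- but that is precisely the information your induction fails to deliver.) A secondary issue is that even when the inductive hypothesis does apply to $(g',z)$, appending $g_n$ at the \emph{end} and then claiming it belongs in the $G_U$-\emph{prefix} requires commuting $g_n$ past the whole link-suffix, which is only justified if that suffix lies in $G_{\lk_\Gamma(\supp_\Gamma(z))}\leq G_{\lk_\Gamma(v)}$ rather than merely in $G_{\lk_\Gamma(\supp_\Gamma(x))}$.

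So the real difficulty sits after the sub-claim, not in it. The standard repair is to drop the one-syllable-at-a-time induction in Case B and analyse the reduction of the whole word $(h_1,\dots,h_n,x_1,\dots,x_m,h_n^{-1},\dots,h_1^{-1})$ in one pass: the hypothesis $\lv(g)\cap\supp_\Gamma(x)=\emptyset$ rules out any join between a $g$-syllable and an $x$-syllable (such a join would force that syllable's vertex into $\lv(g)\cap\supp_\Gamma(x)$), and reducedness of $g$ forces the only possible joins to be between matched pairs $h_i,h_i^{-1}$ with $\supp_\Gamma(x)\subseteq\lk_\Gamma(u_i)$ and with all later surviving syllables in $\lk_\Gamma(u_i)$-groups. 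Every non-cancelling $h_i$ therefore survives into the reduced form of $y$, giving $u_i\in\supp_\Gamma(y)\subseteq U$ for all of them simultaneously, and the cancelling syllables can be shuffled to the end of $g$ to form the $G_{\lk_\Gamma(\supp_\Gamma(x))}$-suffix. This global bookkeeping is what your phrase ``a careful bookkeeping through the outer conjugation'' would have to become, applied to all syllables of $g$ at once rather than only to $g_n$.
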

\begin{remark}
    This result holds for trivial $x$ by (somewhat counter-intuitively) defining the link of the empty set in a graph $\Gamma$ to be the entire vertex set. The result in such a case is completely trivial, so we have opted to exclude it so as to highlight the interesting case.
\end{remark}
\begin{lemma}\textit{\cite[Lemmas~3.3,~3.4, Corollary~3.8]{Antolinminasyan11}, \cite[Lemma~6.4]{minasyanOsin13}}\label{lem:fullSub} Suppose that $G=\mathbb{GP}(\Gamma, \mathcal{G})$ is a graph product of groups. Then the following hold.
\begin{enumerate}
    \item Let $\mathcal{S}$ be an arbitrary collection of subsets of $V(\Gamma)$. Then $\bigcap_{S\in\mathcal{S}} G_S=G_T$ for $T=\bigcap_{S\in \mathcal{S}} S\subseteq V(\Gamma)$.
    \item If $U, W\subset V(\Gamma)$ and $g\in G$ then there exists some subset $P\subseteq U\cap W$ and $h\in G_W$ such that $gG_Ug^{-1}\cap G_W=hG_Ph^{-1}$.
    \item If $U, W\subseteq V(\Gamma)$ and $g_1, g_2\in G$ are such that for all $u\in U$, $G_u$ is non-trivial and such that $g_1G_Ug_1^{-1}\leq g_2G_Wg_2^{-1}$ then $U\subseteq W$.
    \item If $\mathcal{P}$ is an at most countable collection of parabolic subgroups of $G$ then $\bigcap_{P\in\mathcal{P}}P$ must itself be parabolic.
\end{enumerate}
\end{lemma}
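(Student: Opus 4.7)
The plan is to prove the four parts in sequence, with (3) and (4) reducing to (1) and (2).

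For part (1), I would appeal directly to Green's normal form theorem (Theorem~\ref{Thm:StandardGP}), which guarantees that $\supp_\Gamma(g)$ is well-defined and that $g \in G_S$ if and only if $\supp_\Gamma(g) \subseteq S$. Hence $g \in \bigcap_{S \in \mathcal{S}} G_S$ iff $\supp_\Gamma(g) \subseteq T$ iff $g \in G_T$; the reverse inclusion is immediate.

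For part (2), I would first pick a good representative of $g$ in the double coset $G_W g G_U$. By syllable shuffling one can decompose $g = g_1 g_2$ with $g_1 \in G_W$ and $\fv(g_2) \cap W = \emptyset$; since $g_1 \in G_W$, conjugation gives
\[
g G_U g^{-1} \cap G_W \;=\; g_1 \bigl(g_2 G_U g_2^{-1} \cap G_W\bigr) g_1^{-1},
\]
reducing the problem to identifying the bracketed intersection as $G_P$ for some $P \subseteq U \cap W$. Given any nontrivial $x = g_2 y g_2^{-1}$ in this intersection, the hypothesis $\lv(g_2^{-1}) \cap \supp_\Gamma(x) = \fv(g_2) \cap W = \emptyset$ of Lemma~\ref{lem:amLem3.2} is satisfied, yielding a decomposition $g_2^{-1} = u k$ with $u \in G_U$ and $k \in G_{\lk_\Gamma(\supp_\Gamma(x))}$. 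Because $k$ commutes with $x$ (its support is adjacent to every vertex of $\supp_\Gamma(x)$), the equation $g_2^{-1} x g_2 = y$ collapses to $x = u^{-1} y u \in G_U$, and combining with $x \in G_W$ gives $x \in G_{U \cap W}$ via part (1). A final refinement -- choosing $g_2$ of minimal length in its double coset and defining $P \subseteq U \cap W$ intrinsically, for instance as the set of $v \in U \cap W$ with $G_v \subseteq g_2 G_U g_2^{-1}$ -- then upgrades the containment to an equality of the desired form $G_P$.

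Part (3) reduces to (2): applying (2) to the trivial identity $g_1 G_U g_1^{-1} = g_1 G_U g_1^{-1} \cap g_2 G_W g_2^{-1}$ exhibits $g_1 G_U g_1^{-1}$ as $h G_P h^{-1}$ with $P \subseteq W$, so $G_U$ is conjugate to $G_P$. The nontriviality of each $G_u$ for $u \in U$ combined with Green's normal form then forces $U = P \subseteq W$, using the standard fact that conjugate full subgroups on comparable vertex sets must have equal vertex sets under the nontriviality hypothesis. For part (4), I would iterate (2) to see that any finite intersection of parabolics is parabolic, then for a countable family $\{P_n\}$ consider the descending chain $Q_n = P_1 \cap \cdots \cap P_n$. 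By (3), the defining sets $A_n$ can be chosen to form a descending chain in the finite set $V(\Gamma)$, which must stabilize at some $A_N$; a further application of (2) and (3) then forces the $Q_n$ themselves to stabilize, giving $\bigcap_n P_n = Q_N$ parabolic.

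The main obstacle will be the bookkeeping in part (2): the Lemma~\ref{lem:amLem3.2} argument sketched above exhibits the intersection only as a subgroup of $G_{U \cap W}$, and concluding that it is precisely a full subgroup on a specific $P$ requires a careful minimality argument for $g_2$ together with tracking how the decomposition $g_2^{-1} = uk$ varies with $x$. Once this technical step is in place, parts (1), (3), and (4) follow with comparatively little additional work.
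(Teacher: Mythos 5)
The paper itself offers no proof of this lemma: it is imported verbatim from Antol\'in--Minasyan and Minasyan--Osin ``without proof for brevity'', so there is no internal argument to compare against. Your sketch follows the same route as those sources (Green's normal form plus Lemma~\ref{lem:amLem3.2}), and the step you flag as the main obstacle in part (2) does go through as you propose: with $g=g_1g_2$, $g_1\in G_W$, $\fv(g_2)\cap W=\emptyset$, each nontrivial $x=g_2yg_2^{-1}$ in the intersection produces $g_2^{-1}=uk$ with $u\in G_U$ and $k\in G_{\lk_\Gamma(\supp_\Gamma(x))}$, whence $g_2G_Ug_2^{-1}=k^{-1}G_Uk$; since $k$ centralises $G_v$ for every $v\in\supp_\Gamma(x)\subseteq U\cap W$, each such $G_v$ lies in $g_2G_Ug_2^{-1}$, so $\supp_\Gamma(x)\subseteq P$ for your intrinsic $P$ and the intersection is exactly $G_P$. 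The dependence of $(u,k)$ on $x$ is harmless because only the conjugate $k^{-1}G_Uk$, not $k$ itself, enters the conclusion, and the double-coset minimisation is not needed. Parts (1) and (4) are fine (in (4) one should discard trivial vertex groups so that (3) is applicable to the descending chain of supports).

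The one genuine soft spot is part (3). After reducing to $G_U=fG_Pf^{-1}$ with $P\subseteq U\cap W$, you invoke ``the standard fact that conjugate full subgroups on comparable vertex sets have equal vertex sets''. That fact is true, but it is exactly statement (3) in the special case of nested index sets, so as written the step is circular. It is cleaner to prove (3) directly by the same mechanism as (2): set $g=g_2^{-1}g_1$, replace $g$ by the shortest element of $gG_U$ so that $\lv(g)\cap U=\emptyset$, and for each $u\in U$ pick $1\neq x_u\in G_u$. Then $y_u:=gx_ug^{-1}\in G_W$ and Lemma~\ref{lem:amLem3.2} (with target set $W$) gives $g=pk$ with $p\in G_W$ and $k\in G_{\lk_\Gamma(u)}$, so $y_u=px_up^{-1}$ and hence $x_u\in G_W$, forcing $u\in W$. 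With that substitution your argument is complete and matches the proofs in the cited references.
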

We use these lemmas to make the following definitions.
\begin{definition} \label{def:essential} Let $G=\mathbb{G}(\Gamma, \mathcal{G})$ be a non-degenerate graph product of groups. Suppose that $P=gG_Ug^{-1}$ with $U\subseteq V(\Gamma)$ and $g\in G$, so $P$ is a parabolic subgroup of $G$. We define the \emph{essential support} $\esupp_\Gamma(P)$ to be the set $U$. This is well-defined Lemma~\ref{lem:fullSub}(3).

Given an arbitrary subset $X\subseteq G$ we define the \emph{parabolic closure} $\Pc(X)$ to be the intersection of all parabolic subgroups containing $X$, which is itself parabolic by Lemma~\ref{lem:fullSub}(4). We extend the definition of essential support by defining $\esupp_\Gamma(X)=\esupp(\Pc(X))$ for all $X\subseteq G$.
\end{definition}

The following definition will be crucial to the proof of Theorem~\ref{Thm:mainIntro}.
\begin{definition} Let $G=\mathbb{G}(\Gamma, \mathcal{G})$ be a non-degenerate graph product of groups.
We say that a pair of vertices $u$ and $v$ of $\Gamma$ are \emph{separated} (with respect to the graph product $\mathbb{GP}(\Gamma, \mathcal{G})$) if the edge distance between $u$ and $v$ is at least $2$ and the full subgroup $G_{\lk_\Gamma(\{u, v\})}$ is finite.
\end{definition}
\begin{example} \label{ex:diam3}
    If $G=\mathbb{G}(\Gamma, \mathcal{G})$ is a graph product of groups with $\diam(\Gamma)\geq 3$ then we will always have a pair of separated vertices. Indeed, if $a, b\in V(\Gamma)$ are edge distance three apart then $\lk_\Gamma(\{a, b\})$ must be empty, and so induces a trivial full subgroup.
\end{example}

\subsection{Acylindrical Hyperbolicity of Graph Products}\label{sec:acylGP}
In this section we recall results of Minasyan and Osin which provide a condition for a graph product of groups to be acylindrically hyperbolic. Recall that a graph $\Gamma$ is \emph{irreducible} if its graph theoretical complement --- the graph obtained from $\Gamma$ by replacing every edge with a non-edge and every non-edge with an edge --- is connected.

\begin{theorem}\label{thm:OsinGPAH}\textit{\cite[Theorem~2.12]{minasyanOsin13}}
     Let $G = \mathbb{GP}(\Gamma, \mathcal{G})$ be a non-degenerate graph product of groups with $\Gamma$ irreducible. Suppose that $H\leq G$ is a subgroup that is not contained in a proper parabolic subgroup of $G$. Then $H$ is either virtually cyclic or acylindrically hyperbolic.
\end{theorem}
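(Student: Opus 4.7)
The plan is to produce a non-trivial acylindrical action of $G$ on a tree $T$ with proper parabolic vertex stabilizers, and then use the hypothesis on $H$ together with Theorem~\ref{Thm:categorisedActions} to force the dichotomy.

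First, I would exploit the irreducibility of $\Gamma$: since $\mathbb{GP}(\Gamma, \mathcal{G})$ is non-degenerate we have $|V(\Gamma)| \geq 2$, and since $\Gamma$ is irreducible it is not a non-trivial join, so there exists $v \in V(\Gamma)$ with $\lk_\Gamma(v) \subsetneq V(\Gamma) \setminus \{v\}$. Set $S = \{v\} \cup \lk_\Gamma(v)$ and $V' = V(\Gamma) \setminus \{v\}$. Since no edge of $\Gamma$ joins $v$ to a vertex outside $\lk_\Gamma(v) = S \cap V'$, the standard Bass--Serre theory of graph products yields a non-trivial amalgamated free product decomposition
\[G = G_S *_{G_{\lk_\Gamma(v)}} G_{V'},\]
with $G_S = G_v \times G_{\lk_\Gamma(v)}$. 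Let $T$ be the associated Bass--Serre tree; then the vertex stabilizers are conjugates of $G_S$ and $G_{V'}$ (both proper parabolic) and the edge stabilizers are conjugates of $G_{\lk_\Gamma(v)}$.

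The central step is to prove that this $G$-action on $T$ is acylindrical. By Theorem~\ref{Thm:KCisAA} it suffices to exhibit constants $k$ and $C$ such that any edge path in $T$ of length $k$ has pointwise stabilizer of cardinality at most $C$. The pointwise stabilizer of such a path is an intersection of successive conjugates of $G_{\lk_\Gamma(v)}$; by repeatedly invoking Lemma~\ref{lem:fullSub}(2) and using Lemma~\ref{lem:amLem3.2} to control the conjugating elements syllable-by-syllable, one shows that along successive edges the essential support of this intersection strictly shrinks within $\lk_\Gamma(v)$, eventually being forced into some $\lk_\Gamma(\{v,w\})$ for a $w$ not adjacent to $v$. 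I expect this to be the main obstacle: the naive splitting above might not directly give a good acylindricity constant, and one may have to refine the choice, for example by picking $v$ so that $\lk_\Gamma(v)$ is minimal in the subset order, iterating the construction, or instead splitting over an intersection of the form $G_{\lk_\Gamma(\{u,v\})}$ for a carefully chosen pair of non-adjacent vertices, so that irreducibility of $\Gamma$ guarantees uniform termination of the shrinking process.

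Once acylindricity is established, the endgame is immediate. The restricted action of $H$ on $T$ is itself acylindrical, so Theorem~\ref{Thm:categorisedActions} applies to $H$ acting on the $0$-hyperbolic space $T$. If that action were elliptic, Lemma~\ref{lem:elipticWellDef} would give $H$ a fixed vertex, so $H$ would lie inside a conjugate of $G_S$ or $G_{V'}$ - both proper parabolic - contradicting the hypothesis. Hence the action is either lineal, in which case $H$ is virtually cyclic, or non-elementary, in which case $H$ is acylindrically hyperbolic by definition. This yields the dichotomy.
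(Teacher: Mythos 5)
First, a point of comparison: the paper does not prove this statement at all --- Theorem~\ref{thm:OsinGPAH} is quoted from Minasyan and Osin and recorded without proof --- so your attempt has to be measured against their argument and against the internal logic of the rest of the paper. Measured that way, your central step --- producing a non-elementary \emph{acylindrical} action of $G$ on a tree --- cannot be repaired by any choice of splitting, because succeeding would prove that every irreducible, non-degenerate, non-virtually-cyclic graph product is acylindrically \emph{arboreal}, and that is false. The right angled Artin group on the pentagon $C_5$ (or the one in Proposition~\ref{prop:brokenRAAG}) is irreducible and not virtually cyclic, hence covered by the theorem, yet by Corollary~\ref{cor:GPinfVertex} it admits no non-elementary acylindrical action on \emph{any} tree, since $\diam(C_5)=2$. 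Concretely, in your splitting $G = G_S *_{G_{\lk_\Gamma(v)}} G_{V'}$ the shrinking of supports you hope for need not terminate in a finite group: if some $x\in\lk_\Gamma(v)$ has a neighbour $w\notin N_\Gamma(v)$, then $G_x$ is centralised by the subgroup $G_v * G_w$, and every $g\in G_v*G_w$ satisfies $G_x = gG_xg^{-1}\leq gG_{\lk_\Gamma(v)}g^{-1}$, so $G_x$ pointwise fixes the edge $gG_{\lk_\Gamma(v)}$; since $G_v\leq G_S$, $G_w\leq G_{V'}$ and both meet $G_{\lk_\Gamma(v)}$ trivially, alternating words carry these edges arbitrarily far apart, so the fixed subtree of $G_x$ is unbounded and no constants $(k,C)$ exist whenever $G_x$ is infinite (take $v=1$, $x=2$, $w=3$ in $C_5$). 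This obstruction is exactly the failure of the ``separated pair'' condition of Theorem~\ref{Thm:mainIntro}, and irreducibility does not rule it out.

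The missing idea is that acylindrical hyperbolicity of a group acting on a tree does not require the whole action to be acylindrical. Minasyan and Osin's actual route is their general criterion for groups acting on trees (the main theorem of \cite{minasyanOsin13}): roughly, if $G$ acts on a tree minimally and without fixing an end, and there exists a \emph{single} pair of vertices whose pointwise stabiliser is finite, then $G$ is either virtually cyclic or acylindrically hyperbolic --- one only has to manufacture one WPD loxodromic element, not control all long paths. For graph products they verify this one-pair condition using the parabolic subgroup machinery of Lemma~\ref{lem:fullSub} and an induction on $|V(\Gamma)|$, and this much weaker condition survives precisely in the situations (such as $C_5$) that defeat global acylindricity. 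Your endgame --- ellipticity of $H$ would place it in a conjugate of $G_S$ or $G_{V'}$, contradicting the hypothesis, so the action of $H$ is lineal or non-elementary --- is sound, but it has to be run through a criterion of that weaker type rather than through Theorem~\ref{Thm:categorisedActions} applied to an acylindrical tree action that in general does not exist.
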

The following corollary is then immediate,.
\begin{corollary}\cite[Corollary~2.13]{minasyanOsin13}\label{cor:OsinGPAH}
    Let $G = \mathbb{GP}(\Gamma, \mathcal{G})$ be a non-degenerate graph product of groups with $\Gamma$ irreducible. Then $G$ is either virtually cyclic or acylindrically hyperbolic.
\end{corollary}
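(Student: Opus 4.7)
The plan is to derive this as an immediate consequence of Theorem~\ref{thm:OsinGPAH} by applying it to $H = G$ itself. All we need to verify is that $G$ is not contained in any proper parabolic subgroup of itself, and then the dichotomy (virtually cyclic vs.\ acylindrically hyperbolic) comes directly from the theorem.

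First I would recall that the parabolic subgroups of $G = \mathbb{GP}(\Gamma,\mathcal{G})$ are precisely the conjugates $gG_Ug^{-1}$ of full subgroups for $U \subseteq V(\Gamma)$, and that such a parabolic is proper precisely when $U \subsetneq V(\Gamma)$ (since $G = G_{V(\Gamma)}$). Suppose for contradiction that $G \leq gG_Ug^{-1}$ for some proper $U$. Pick any vertex $v \in V(\Gamma)\setminus U$. Then the full subgroup $G_v = G_{\{v\}}$ is non-trivial by the non-degeneracy hypothesis, and the inclusion $e \cdot G_{\{v\}} \cdot e^{-1} \leq G \leq gG_Ug^{-1}$ together with part (3) of Lemma~\ref{lem:fullSub} forces $\{v\} \subseteq U$, contradicting the choice of $v$. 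Hence no proper parabolic subgroup of $G$ contains $G$.

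Having verified the hypothesis of Theorem~\ref{thm:OsinGPAH} for $H = G$, the conclusion is that $G$ is either virtually cyclic or acylindrically hyperbolic, which is exactly the statement of the corollary. There is no real obstacle here: the work is entirely packaged inside Theorem~\ref{thm:OsinGPAH} and the essential-support / full-subgroup machinery of Lemma~\ref{lem:fullSub}, and the only care needed is to invoke the non-degeneracy hypothesis so that the vertex group $G_v$ used in the contradiction is genuinely non-trivial.
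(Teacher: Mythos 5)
Your proposal is correct and matches the paper's intent exactly: the paper states this corollary as an immediate consequence of Theorem~\ref{thm:OsinGPAH} applied to $H=G$, offering no further argument. Your verification that $G$ cannot lie in a proper parabolic subgroup (via non-degeneracy and Lemma~\ref{lem:fullSub}(3)) simply fills in the routine detail the paper leaves implicit.
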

 It is also a clear corollary of Theorem~\ref{thm:OsinGPAH} that if $H$ is a subgroup of a non-trivial graph product of groups $\mathbb{GP}(\Gamma, \mathcal{G})$ and if the essential support of $H$ induces an irreducible subgraph of $\Gamma$ we have that $H$ is either acylindrically hyperbolic or virtually cyclic.
\subsection{Proof of Theorem~\ref{Thm:mainIntro}}
We restate the theorem here for clarity.
\introGPthm*
\begin{proof}
    Let $G$, $\Gamma$ be as in the statement. For the if direction, assume that there exists a pair of vertices $a, b\in V(\Gamma)$ that are separated and that $G$ is not virtually cyclic. Let $N=N_\Gamma(a)\cap N_\Gamma(b)=\lk_\Gamma(\{a, b\})$, which must induce a finite full subgroup $G_N$ of $G$ by choice of $a$ and $b$. Let $A=\Gamma-b$, $B=\Gamma-a$, and let $C=A\cap B$, so that $G\cong G_A*_{G_C} G_B$. We claim that the action of $G$ on the Bass--Serre tree of this splitting is $(3, |G_N|)$-acylindrical.
    
    Let $T$ be the Bass--Serre tree associated to this splitting, and let $P$ be a path in $T$ with three edges. We may assume without loss of generality that the middle edge is labelled $G_C$, so there exist $g\in G_A-G_C$, $h\in G_B-G_C$ such that the other two edges in $P$ are labelled $gG_C$ and $hG_C$, as shown in Figure~\ref{linefig}.
    
    \begin{figure} 
        \centering
        \begin{tikzpicture}[->,shorten >=1pt,auto,node distance=3cm,
                thick]

            \node (1) at (-1, 0) {$G_A$};
            \node (2) at (1, 0) {$G_B$};
            \node (3) at (-2.5, -1) {$gG_B$};
            \node (4) at (2.5, 1) {$hG_A$};

            \path[-]
            (1) edge node {$G_C$} (2)
                edge node {$gG_C$} (3)
            (2) edge node {$hG_C$} (4);
        \end{tikzpicture}
        \caption{\label{linefig}A generic 3-path in the Bass--Serre tree of the amalgam $G_A*_{G_C}\nolinebreak G_B$ can be assumed to use $G_c$ as its middle edge as the action is edge-transitive and by isometries.}
    \end{figure}
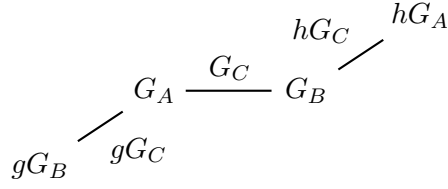    
    We first consider the subgroup $gG_Cg^{-1}\cap G_C$. If $l_{V_{\Gamma}}(g)$ contains any vertex of $C$ we can replace $g$ with $g'$ such that $|g'|_\Gamma<|g|_\Gamma$ and such that $gG_C=g'G_C$, a process which will terminate by the assumption that $g\notin G_c$. We can thus assume without loss of generality that $\lv(g)\cap C=\emptyset$. 

    If $gG_Cg^{-1}\cap G_C$ contains only the trivial element then $\Ps_G(P)$ is trivial and we are done, so assume that there exists $y\in gG_Cg^{-1}\cap G_C$ that is non-trivial. Then $y\in G_C$ and $y=gxg^{-1}$ for some non-trivial $x\in G_C$. Since $x$ is supported in $C$ we have that $\supp_\Gamma(x)\cap \lv(g)=\emptyset$ and we can invoke Lemma~\ref{lem:amLem3.2} with $U=C$, which shows that $g$ is supported in $C\cup \lk_\Gamma(\supp_\Gamma(x))$. Thus if $\supp_\Gamma(x)$ contains a vertex outside of $N_\Gamma(a)$ then $g$ is supported in $B$, contradicting the fact that $g\in G_A-G_C$ which is disjoint from $G_B$. It follows that $x$ is supported in $N_\Gamma(a)$, and so $y\in gG_{N_\Gamma(a)}g^{-1}$ meaning that
    \[gG_Cg^{-1}\cap G_C \subseteq gG_{N_\Gamma(a)}g^{-1}.\]
    Similarly
    \[ hG_Ch^{-1}\cap G_C \subseteq hG_{N_\Gamma(b)}h^{-1},\]
    and thus
    \[\Ps_G(P)=gG_Cg^{-1}\cap G_C\cap hG_Ch^{-1}=\left(gG_Cg^{-1}\cap G_C\right)\cap\left(hG_Ch^{-1}\cap G_C\right)\]\[\subseteq gG_{N_\Gamma(a)}g^{-1}\cap hG_{N_\Gamma(b)}h^{-1}.\]
    This last set is conjugate to $(h^{-1}gG_{N(a)}g^{-1}h)\cap G_{N(b)}$, and by Lemma~\ref{lem:fullSub}~(2) there exist some $f\in G$ and $N'\subseteq N$  such that 
    \[fG_{N'}f^{-1} =h^{-1}gG_{N(a)}g^{-1}h\cap G_{N(b)}.\]
    It follows that 
    $|\Ps_G(P)|\leq |fG_{N'}f^{-1}|\leq |G_N|,$
    which is finite by assumption. Thus the action of $G$ on the Bass--Serre tree $T$ given by $G_A*_{G_C}G_B$ must be $(3, |G_N|)$-acylindrical, and so must be acylindrical by Theorem~\ref{Thm:KCisAA}.

    It remains to show that the action of $G$ on $T$ is non-elementary. The subgroup $G_C$ is not equal to $G_A$ or $G_B$ as $\mathbb{GP}(\Gamma, \mathcal{G})$ is non-degenerate, so the splitting $G=G_A*_{G_C}G_B$ is non-trivial and the action of $G$ on $T$ cannot be elliptic. The group $G$ is not virtually cyclic by assumption, and so it follows that $G$ is an acylindrically arboreal group as required. 

    \bigskip For the only if direction, being virtually cyclic clearly prohibits $G$ from being acylindrically arboreal by definition of non-elementary, so assume that $\Gamma$ contains no pair of separated vertices. We will show in such a case that if $G$ acts acylindrically on some tree $T$ then this action must be elliptic. 
    
    By Example~\ref{ex:diam3} we must then have that $\diam(\Gamma)<3$, so by assumption we have that $\diam(\Gamma)=2$. Assume that $G$ acts acylindrically on some tree $T$, and fix the generating set $S$ of $G$ to be the union of a collection of generating sets of its vertex groups. Let $a, b\in V(\Gamma)$ be distinct vertices.
    We separate into two cases based on whether or not $a$ and $b$ lie in an induced $P_3$ subgraph of $\Gamma$.

    If $a$ and $b$ lie in an induced $P_3$ then, if $u$ and $v$ are the endpoints of this $P_3$, the full subgroup $G_{\{a, b\}}$ is entirely contained in the full subgroup $G_{\Lambda_{u,v}}$, where $\Lambda_{u,v}:=\{u, v\}\cup \lk_\Gamma(\{u, v\})$. The group $G_{\Lambda_{u,v}}$ is the direct product of $G_{\lk_\Gamma(\{u, v\})}$ and $G_{\{u, v\}}$, both of which are infinite as $u$ and $v$ are not neighbours in $\Gamma$ by construction and $u$ and $v$ are not separated. Thus by Lemma~\ref{lem:BrokenProducts} $G_{\Lambda_{u,v}}$ must act elliptically, and so must $G_{\{a, b\}}$ as a subgroup of $G_{\Lambda_{u,v}}$.

    If $a$ and $b$ do not lie in an induced $P_3$ they must then be adjacent by the fact that $\diam(\Gamma)=2$, so we have three subcases.
    \begin{enumerate}
        \item Both $G_a$ and $G_b$ are finite, so $G_{\{a, b\}}$ is finite and must act elliptically.
        \item Both $G_a$ and $G_b$ are infinite, so $G_{\{a, b\}}$ is the product of two infinite groups and thus acts elliptically by Lemma~\ref{lem:BrokenProducts} as above.
        \item Exactly one of $G_a$ and $G_b$ is infinite. Without loss of generality assume that $G_a$ is infinite and $G_b$ is finite. Assume for contradiction that $G_{\{a, b\}}$ contains a loxodromic element $g$. We claim that this implies that $\Gamma$ is a complete graph.
    
        We have that $\lk_\Gamma(a)=\lk_\Gamma(b)$ to avoid inducing a $P_3$ containing $a$ and $b$. Similar to above let $\Lambda_{a,b}=\{a, b\}\cup \lk_\Gamma(\{a, b\})$. The group $G_{\Lambda_{a, b}}$ is the direct product of $G_{\lk_\Gamma(\{a, b\})}$ and $G_{\{a, b\}}$, so $G_{\lk_\Gamma(\{a, b\})}$ must be finite by Lemma~\ref{lem:BrokenProducts} as $G_a\leq G_{\{a, b\}}$ is infinite and we are assuming that $G_{\{a, b\}}$ contains a loxodromic element. Thus $\lk_\Gamma(\{a, b\})$ must either be a complete graph with finite vertex groups or the empty graph as the graph product over any non-complete graph is infinite, and so the subgraph induced by $\Lambda_{a,b}$ is complete. 
    
        Assume for contradiction that there exists $v\in V(\Gamma)-\Lambda_{a,b}$. We may assume without loss of generality that $v$ is neighbour to some element of $\Lambda_{a,b}$ by the fact that $\diam(\Gamma)= 2$. If $v$ is a neighbour of one of $a$ or $b$, then it must also be a neighbour of the other to avoid inducing a $P_3$ on $v, a, b$, and so $v\in\lk_\Gamma(\{a, b\})$ contradicting our choice of $v$. It follows that $v$ is a neighbour of some element of $\lk_\Gamma(\{a, b\})$. However, we then have that $a, v$ is a separated pair as $\lk_\Gamma(\{a, v\})\subseteq \lk_\Gamma(a)\subseteq \lk_\Gamma(\{a, b\})$ which corresponds to a finite full subgroup, providing the desired contradiction. It follows that $V(\Gamma)=\Lambda_{a,b}$, and $\Gamma$ is complete as claimed. 
        
        This contradicts the assumption that $\diam(\Gamma)\geq2$, so we find that no such loxodromic element $g$ exists and $G_{\{a, b\}}$ must act elliptically.
    \end{enumerate}
    Now let $w\in V(\Gamma)$ be a vertex. Since $\diam(\Gamma)= 2$ there exists some vertex $x\in V$ distinct from $w$. The full subgroup $G_{\{w, x\}}$ then embeds in an elliptic subgroup of $G$ as above, so is itself elliptic, and it follows that $G_w$ must act elliptically on $T$.
    
    Finally, let $g_1, g_2\in S$ be generators contained in the vertex groups $G_{v_1}$ and $G_{v_2}$ respectively. The full subgroup $G_{\{v_1, v_2\}}$ must act elliptically on $T$ as above, and so $g_1$ and $g_2$ must share a fixed point. All conditions of Corollary~\ref{cor:eliptgen} are therefore satisfied,  so the action $G$ on $T$ contains no loxodromic elements and must be elliptic by Theorem~\ref{Thm:categorisedActions}.

    It follows that every acylindrical action of $G$ of a tree $T$ must be elliptic, so $G$ is not acylindrically arboreal as required.
\end{proof}
\subsection{Consequences of Theorem~\ref{Thm:mainIntro}}
Our first corollary is the following natural restriction to the case where all vertex groups are infinite.

\begin{corollary}\label{cor:GPinfVertex}
    Let $G=\mathbb{GP}(\Gamma, \mathcal{G})$ be a non-degenerate graph product of groups with infinite vertex groups. Then we have that $G$ is acylindrically arboreal if and only if $\diam(\Gamma)\geq 3$.
\end{corollary}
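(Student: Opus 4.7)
The plan is to derive this corollary directly from Theorem~\ref{Thm:mainIntro}, handling the one case in which its diameter hypothesis fails via Lemma~\ref{lem:BrokenProducts}.

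For the forward direction, if $\diam(\Gamma)\geq 3$ I would invoke Example~\ref{ex:diam3} to produce vertices $a, b \in V(\Gamma)$ at edge distance at least $3$ (or in distinct components, if $\Gamma$ is disconnected), so that $\lk_\Gamma(\{a,b\})=\emptyset$ and hence the corresponding full subgroup is trivial, in particular finite. This makes $(a,b)$ a separated pair. Furthermore, the full subgroup on $\{a,b\}$ is the free product $G_a * G_b$ of two infinite groups, which is not virtually cyclic, so $G$ itself is not virtually cyclic. Theorem~\ref{Thm:mainIntro} then yields acylindrical arboreality.

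For the reverse direction I would split into cases based on $\diam(\Gamma)$. Non-degeneracy forces $\Gamma$ to have at least two vertices, so if $\diam(\Gamma)<3$ then $\diam(\Gamma)\in\{1,2\}$. If $\diam(\Gamma)=1$ then $\Gamma$ is complete, so for any $v\in V(\Gamma)$ the group $G$ decomposes as the direct product $G_v \times G_{V(\Gamma)\setminus\{v\}}$ of two infinite groups, and Lemma~\ref{lem:BrokenProducts} forces $G$ to act elliptically in every acylindrical action on a tree, ruling out acylindrical arboreality. If $\diam(\Gamma)=2$, I would show no pair of vertices is separated: for every pair $a, b$ at edge distance at least $2$ the distance is exactly $2$, so they share a common neighbour $c$ which contributes an infinite subgroup $G_c \leq G_{\lk_\Gamma(\{a,b\})}$, so $(a,b)$ is not separated. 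Theorem~\ref{Thm:mainIntro} then yields that $G$ is not acylindrically arboreal.

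The only point requiring care is that Theorem~\ref{Thm:mainIntro} assumes $\diam(\Gamma)\geq 2$, which is why the case $\diam(\Gamma)=1$ must be dispatched separately by the product result. The remainder is a direct unpacking of the definition of a separated pair: under the hypothesis that all vertex groups are infinite, the existence of a separated pair is equivalent to the existence of two vertices at edge distance at least $2$ with no common neighbour, which is equivalent to $\diam(\Gamma)\geq 3$.
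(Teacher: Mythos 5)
Your proposal is correct and follows essentially the same route as the paper: Example~\ref{ex:diam3} plus Theorem~\ref{Thm:mainIntro} for the forward direction, and a case split on $\diam(\Gamma)\in\{1,2\}$ using Lemma~\ref{lem:BrokenProducts} for the complete case and the failure of separation (via the infinite common neighbour) for diameter two. The only cosmetic difference is that you certify non-virtual-cyclicity via the free product $G_a * G_b$ rather than the paper's remark following Theorem~\ref{Thm:mainIntro}; both are valid.
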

\begin{proof}
    For the if direction the diameter being at least three implies the existence of a pair of separated vertices $\{a, b\}$ by Example~\ref{ex:diam3}. The groups $G_a$ and $G_b$ are both infinite, so $G$ is not virtually cyclic and $G$ is acylindrically arboreal by Theorem~\ref{Thm:mainIntro}.
    
    For the only if direction, assume that $\diam(\Gamma)\leq2$. If $\diam(\Gamma)=1$ then $G$ is the direct product of infinite groups, which is not acylindrically arboreal by Lemma~\ref{lem:BrokenProducts}. If $\diam(\Gamma)=2$ then any pair of non-adjacent vertices share a neighbour with an infinite vertex group, and thus no pair of vertices of $\Gamma$ can be separated. The group $G$ therefore cannot be acylindrically arboreal by Theorem~\ref{Thm:mainIntro} as required.
\end{proof}
We can now prove Proposition~\ref{prop:brokenRAAG}, which we restate here for clarity.
\brokenraag*
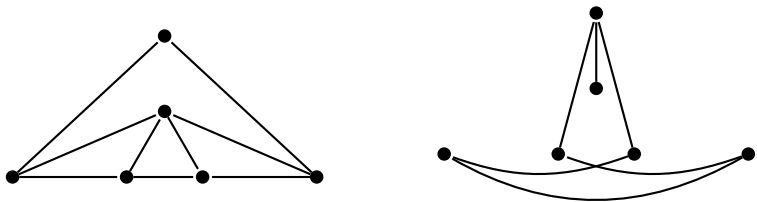
\begin{figure} \label{Fig:CoolRAAG}
        \centering
    \begin{minipage}{0.45\textwidth}
    \begin{flushright}
    \begin{tikzpicture}[->,shorten >=1pt,auto,node distance=3cm,
            thick]

        \node[draw, circle, fill, inner sep=1.5pt] (a) at (0, 0) {};
        \node[draw, circle, fill, inner sep=1.5pt] (b) at (-0.5, -0.87) {};
        \node[draw, circle, fill, inner sep=1.5pt] (c) at (0.5, -0.87) {};
        \node[draw, circle, fill, inner sep=1.5pt] (d) at (0, 1) {};
        \node[draw, circle, fill, inner sep=1.5pt] (e) at (-2, -0.87) {};
        \node[draw, circle, fill, inner sep=1.5pt] (f) at (2, -0.87) {};
            
         \path[-]
          (b) edge node {} (a)
              edge node {} (c)
          (a) edge node {} (c)
          (e) edge node {} (d)
              edge node {} (a)
              edge node {} (b)
          (f) edge node {} (d)
              edge node {} (a)
              edge node {} (c);
          
    \end{tikzpicture}
    \end{flushright}
    \end{minipage}\hfill
     \begin{minipage}{0.45\textwidth}
     \begin{flushleft}
     \begin{tikzpicture}[->,shorten >=1pt,auto,node distance=3cm,
            thick]

        \node[draw, circle, fill, inner sep=1.5pt] (a) at (0, 0) {};
        \node[draw, circle, fill, inner sep=1.5pt] (b) at (-0.5, -0.87) {};
        \node[draw, circle, fill, inner sep=1.5pt] (c) at (0.5, -0.87) {};
        \node[draw, circle, fill, inner sep=1.5pt] (d) at (0, 1) {};
        \node[draw, circle, fill, inner sep=1.5pt] (e) at (-2, -0.87) {};
        \node[draw, circle, fill, inner sep=1.5pt] (f) at (2, -0.87) {};

        \path[-]
        (b) edge node {} (d)
        (c) edge node {} (d)
        (a) edge node {} (d);

        \path[-]
        (f) [bend left=30] edge node {} (e)
        (f) [bend left=20] edge node {} (b)
        (c) [bend left=20] edge node {} (e);

    \end{tikzpicture}
    \end{flushleft}
     \end{minipage}
    \caption{\label{weirdgph}The underlying graph $\Gamma$ of the right angled Artin group in Proposition~\ref{prop:brokenRAAG} (left) and its graph theoretical complement (right).}
\end{figure}

\begin{proof}
    Let $G$ be the RAAG on the graph $\Gamma$ shown to the left of Figure 2. First we observe that $G$ is not acylindrically arboreal by Corollary~\ref{cor:GPinfVertex} as $\diam(\Gamma)=2$. 
    
    Next we show that $G$ is acylindrically hyperbolic, and by Corollary~\ref{cor:OsinGPAH} it suffices to show that $\Gamma$ is irreducible and that $G$ is not virtually cyclic. The second of these conditions is clear as $G$ has a $\Z^2$ subgroup, and for the first,  the graph theoretical complement of $\Gamma$ is the connected graph shown in Figure~\ref{weirdgph} (right), so $\Gamma$ is irreducible and $G$ is acylindrically hyperbolic as claimed.

    Finally we  show that $G$ does not have property (FA$^-$), and in fact has no infinite normal subgroups with (FA$^-$). A result of Antol\'in and Minasyan asserts that any subgroup of a right angled Artin group is either free abelian of finite rank or projects onto a free group of rank two \cite[Corollary~1.6]{Antolinminasyan11}. Thus $G$ does not have (FA$^-$) as it is not free abelian. Further, any non-trivial (and thus infinite) normal subgroup of $G$ with property (FA$^-$) would have to be free abelian as a projection to the free group of rank two would induce an action on a tree with no fixed points or line. However, the fact that $G$ is acylindrically hyperbolic implies that $G$ can contain no infinite normal (or indeed even s-normal) abelian subgroup \cite[Lemma~7.1]{Osin13}. Thus $G$ has no normal subgroup with property (FA$^-$) as claimed, and so $G$ has all properties required.
\end{proof}
\begin{remark}
    This example is by no means unique - indeed, the RAAG on a $C_5$ graph would also satisfy all required properties.
\end{remark}

We can also consider subgroups of graph products using Theorem~\ref{Thm:mainIntro}. Recall that, for a graph product of groups $G=\mathbb{GP}(\Gamma, \mathcal{G})$ and a subgroup $H$ of $G$, the \emph{parabolic closure} $\Pc(H)$ is the intersection of all parabolic subgroups of $G$ that contain $H$, and the \emph{essential support} $\esupp_\Gamma(H)$ is the essential support of the parabolic closure of $H$. 
\begin{corollary} \label{cor:subgrpsGP}
    Let $G=\mathbb{GP}(\Gamma, \mathcal{G})$ be a non-degenerate graph product of groups, and let $H\leq G$ be a subgroup of $G$. If the graph product induced by the full subgroup on the essential support $\esupp_\Gamma(H)$ contains a pair of separated vertices then $H$ is either acylindrically arboreal or virtually cyclic.
\end{corollary}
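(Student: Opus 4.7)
The plan is to reduce the corollary to Theorem~\ref{Thm:mainIntro} applied to the sub-graph product $G_U$ with $U=\esupp_\Gamma(H)$, and then restrict the resulting action to $H$. By Definition~\ref{def:essential} the parabolic closure $\Pc(H)$ is a conjugate of $G_U$, so after replacing $H$ by a suitable conjugate I may assume $H\leq G_U$ with $\Pc(H)=G_U$; equivalently, no parabolic subgroup of $G$ strictly contained in $G_U$ contains $H$. Since $G$ is non-degenerate and $U$ contains a pair of separated vertices, $G_U$ is itself non-degenerate and has $\diam(\Gamma[U])\geq 2$, so the hypotheses of Theorem~\ref{Thm:mainIntro} are satisfied for $G_U$. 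That theorem then yields the dichotomy that either $G_U$ is virtually cyclic, in which case $H\leq G_U$ is virtually cyclic and we are done, or $G_U$ is acylindrically arboreal.

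In the latter case the proof of Theorem~\ref{Thm:mainIntro} is constructive, producing the splitting
\[
G_U \;=\; G_{U\setminus\{b\}} *_{G_{U\setminus\{a,b\}}} G_{U\setminus\{a\}}
\]
at the separated pair $\{a,b\}$, whose Bass-Serre tree $T$ carries an acylindrical non-elementary action of $G_U$. The vertex stabilizers of $T$ are the $G_U$-conjugates of $G_{U\setminus\{a\}}$ and $G_{U\setminus\{b\}}$, each of which is a parabolic subgroup of $G$ strictly contained in $G_U$. By the choice of $H$, $H$ cannot be contained in any such subgroup, so the restricted (and still acylindrical) action of $H$ on $T$ has no global fixed point. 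Since $T$ is $0$-hyperbolic, Theorem~\ref{Thm:categorisedActions} then leaves only two possibilities: the action of $H$ is lineal, in which case $H$ is virtually cyclic, or it is non-elementary, in which case $H$ is acylindrically arboreal.

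The steps are individually short, and the main conceptual point is the interplay between parabolic subgroups of $G$ and vertex stabilizers in $T$: once one observes that every vertex stabilizer is a parabolic subgroup of $G$ properly contained in $G_U$, the minimality of $\Pc(H)=G_U$ immediately rules out an elliptic $H$-action, and Theorem~\ref{Thm:categorisedActions} finishes the job. The only real bookkeeping I anticipate is the justification that a $G_U$-conjugate of $G_{U\setminus\{a\}}$ is still a parabolic subgroup of $G$ (which is immediate from Definition~\ref{def:ParaGP}) and that the inherited action of $H\leq G_U$ on $T$ remains acylindrical (which is immediate from Definition~\ref{def:acyl}). Neither presents a genuine obstacle.
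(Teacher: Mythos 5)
Your argument is correct and is essentially the paper's own proof, merely phrased directly rather than contrapositively: both pass to the conjugate of $H$ inside $G_U=G_{\esupp_\Gamma(H)}$, use the acylindrical splitting of $G_U$ at the separated pair constructed in the proof of Theorem~\ref{Thm:mainIntro}, and rule out an elliptic $H$-action via the minimality of the parabolic closure, leaving only the lineal and non-elementary cases of Theorem~\ref{Thm:categorisedActions}.
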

\begin{proof}
    By a contrapositive argument. Let $H$ be a subgroup of $G$ that is not virtually cyclic and not acylindrically arboreal. We claim that the sub-graph product of groups induced by $\esupp_\Gamma(H)$ contains no separated vertices. 
    
    Indeed, assume that this is not the case and $\esupp_\Gamma(H)$ contains a separated pair $a, b$. We may assume up to conjugation that $\Pc(H)=G_{\esupp_\Gamma(H)}$. Then $\Pc(H)$ splits as $G_A*_{G_C}G_B$ for $a\in A$, $b\in B$ as in the proof of Theorem~\ref{Thm:mainIntro}. If $T$ is the corresponding Bass--Serre tree $H$ must act elliptically on $T$ by Theorem~\ref{Thm:categorisedActions}, so the action of $H$ has a fixed point by definition. It follows that $H$ must lie entirely in some conjugate of $G_A$ or $G_B$, which are both proper full subgroups of $G_{\esupp_\Gamma(H)}$. This contradicts the definition of parabolic closure and thus $\esupp_\Gamma(H)$ contains no such separated pair $a, b$.
\end{proof}
Unfortunately the converse does not hold in general. Once again using the simple case of right angled Artin groups we can construct the following counterexample.
\begin{example}
    Let $G$ be the  RAAG on a $P_3$ whose vertices groups are generated by the letters $a, b$ and $c$ so that $a$ and $c$ generate a copy of $F_2$. The subgroup $\langle ab, bc\rangle$ is isomorphic to $F_2$ so is acylindrically arboreal, but its essential support is $\Gamma$, which contains no separated pair of vertices.
\end{example}

\section{3-Manifold Groups}\label{sec:hyperbolic}
In this section we will consider the acylindrical arboreality of certain fundamental groups of 3-manifolds, and provide a proof of Theorem~\ref{thm:intromfd}. We will provide a geometric condition on the fundamental group of a compact and orientable hyperbolic 3-manifold with empty or incompressible toroidal boundary which is equivalent to acylindrical arboreality, where by \emph{hyperbolic} we mean carries a complete metric of constant negative curvature on its interior. For avoidance of doubt, all surface embeddings will be assumed to be proper, i.e. if $f:S\hookrightarrow M$ is an embedding of a surface into a $3$-manifold $M$ then $f^{-1}(\partial M)=\partial S$.

\subsection{Relative Quasi-Convexity and Malnormality}
Let $G$ be a group, and $H$ a subgroup of $G$. We say that $H$ is \emph{malnormal} in $G$ if for all $g\in G\setminus H$, $gHg^{-1}\cap H=\{1\}$. The concept of malnormality is very closely linked with acylindricity, and in the context of relatively hyperbolic groups this is very well studied, but it can be difficult to find malnormal subgroups. We will instead use the related concept of \emph{relative quasi-convexity}, for which we will use the definition due to Osin \cite[Definition~1.8]{Osin06}. For a more thorough treatment of relative hyperbolicity and relative quasi-convexity, see \cite{Hruska08}.

Let $G$ be a group hyperbolic relative to a collection of subgroups $\mathcal{P}$, which we will always assume to be closed under conjugation. We say that a subgroup $H$ of $G$ is \emph{peripheral} if $H$ is conjugate into a peripheral subgroup $P\in\mathcal{P}$, and that an element $g\in G$ is a \emph{peripheral} element if $g$ is an element of some peripheral subgroup.

Intuitively, a subgroup of a group $G$ hyperbolic relative to a collection of subgroups $\mathcal{P}$ is relatively quasi-convex if the inclusion of $H$ into the Cayley graph of $G$ is close to being a convex subset once we have collapsed subsets corresponding to the peripheral subgroups. More formally, we have the following definition.
\begin{definition}
   Let $G$ be a group hyperbolic relative to a collection of subgroups $\mathcal{P}$ such that $G$ is generated by the finite set $X$. We say that a subgroup $H$ of $G$ is \emph{relatively quasi-convex} with respect to $\mathcal{P}$, or quasi-convex relative to $\mathcal{P}$, if there exists some constant $\epsilon>0$ such that the following condition holds. Let $f$ and $g$ be elements of $H$, and $p$ a geodesic between $f$ and $g$ in the Cayley graph of $G$ with respect to the generating set $X\cup\mathcal{P}$. Then for any vertex $w\in p$ there exists a vertex $v\in H$ such that $d_{X\cup\mathcal{P}}(v, w)\leq\epsilon$.
\end{definition}

We have the following definitions and theorem that link relative quasi-convexity to malnormality.
\begin{definition}
    Let $G$ be a group and let $H$ be a subgroup of $G$. Let $g_1, ...,g_n\in G$ be a set of $n$ elements in $G$. If $g_1H,...,g_{n}H$ is a set of disjoint cosets of $H$ in $G$, we say that the conjugates $g_1 Hg_1^{-1}, ..., g_n Hg_n^{-1}$ of $H$ in $G$ are \emph{essentially distinct}.

    Now assume that $G$ is hyperbolic relative to a collection of subgroups $\mathcal{P}$. We say that a subgroup $H$ of $G$ has \emph{finite relative height (in $G$)} if there exists some constant $n_H\in\Z_{>0}$ such that for any set $g_1 Hg_1^{-1}, ..., g_{n_H} Hg_{n_H}^{-1}$ of $n_H$ essentially distinct conjugates of $H$ in $G$ we have that  $\displaystyle{\bigcap_{0<i\leq n_H}g_iHg_i^{-1}}$ is either finite or is a peripheral subgroup of $G$. 
\end{definition}
\begin{theorem} \cite[Theorem~1.4]{HW09} \label{thm:FinRelH}
    Let $G$ be a group hyperbolic relative to a collection of subgroups $\mathcal{P}$, and $H$ a relatively quasi-convex subgroup of $G$ with respect to $\mathcal{P}$. Then $H$ has finite relative height in $G$.
\end{theorem}
A special case of relative quasi-convexity occurs when $H$ is a subgroup of a hyperbolic group $G$ (i.e. $\mathcal{P}$ contains only the trivial subgroup). In this case we say that the subgroup in question is \emph{quasi-convex}. The restriction of Theorem~\ref{thm:FinRelH} to quasi-convex subgroups of hyperbolic groups was first proved in \cite[Main Theorem]{GitikMahanRipsSageev98}, and gives the stronger conclusion that the intersection of a certain number of cosets of $H$ must be finite, as we have no non-trivial peripheral elements. This implies that the action of a hyperbolic group on a tree with quasi-convex edge stabilisers and finitely many orbits of edges will be acylindrical, a fact that is well known among experts (see \cite{Linton22}, for example).

\subsection{Splittings of 3-Manifold Groups and Subgroup Tameness}
We will require some deep results from various authors in the field of 3-manifold groups. We begin with some important definitions.
\begin{definition}  Let $M$ be a compact and orientable 3-manifold and $S$ a compact surface embedded in $M$.
\begin{enumerate}
    \item We say that $S$ is \emph{boundary parallel} if there exists an isotopy of $S$ onto some boundary component of $M$.
    \item Assume $S\not\cong S^2$. A disc $D\subset M$ is a \emph{compressing disc} for $S$ if $S\cap D=\partial D$ and this intersection is transverse. A compressing disc is \emph{non-trivial} if $\partial D$ does not bound a disc in $S$. We say that $S$ is \emph{compressible} if it admits a non-trivial compressing disc, and we say that $S$ is \emph{incompressible} otherwise.

     By convention, a sphere $S^2$ embedded in a $3$-manifold $M$ is considered to be incompressible if it does not bound a ball, and a disc embedded in a $3$-manifold is incompressible if it is not homotopic into a boundary component. 
    \item  We say that $M$ is \emph{irreducible} if every sphere embedded in $M$ bounds a $3$-ball, or equivalently if $M$ contains no embedded incompressible sphere.
    \item  If $S$ is incompressible, we say that $S$ is \emph{$2$-sided} if there exists an embedding $h\colon S\times[-1, 1]\rightarrow M$ such that $h(x,0)=x$ for all $x\in S$.
\end{enumerate}
\end{definition}
 We then have the following lemma.
\begin{lemma}\cite[Corollary~6.2]{Hempel76}\label{lem:2sided}
    Let $M$ be a $3$-manifold and $S$ a $2$-sided incompressible surface in $M$. Then the natural map $i_*\colon\pi_1(S)\rightarrow\pi_1(M)$ induced by the inclusion $i\colon S\rightarrow M$ is injective.
\end{lemma}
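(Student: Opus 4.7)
My plan is to prove the contrapositive via the classical Loop Theorem of Papakyriakopoulos. Suppose that $i_*$ is not injective, so there is an essential loop $\gamma \subset S$ which is non-trivial in $\pi_1(S)$ but null-homotopic in $M$ (hence bounds a possibly singular disk in $M$). The goal is to upgrade this singular null-homotopy into an embedded compressing disk for $S$, which will contradict the assumption that $S$ is incompressible.

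Since $S$ is $2$-sided, choose a tubular neighbourhood $N \cong S \times [-1, 1]$ of $S$ in $M$ and consider the split manifold $M' = M \setminus \operatorname{int}(N)$. Its boundary contains two copies $S_{\pm} = S \times \{\pm 1\}$ of $S$ (together with any components of $\partial M$ not meeting $N$). By the Seifert--van Kampen theorem, $\pi_1(M)$ is obtained from $\pi_1(M')$ (or from the pair of fundamental groups of its components, if $S$ separates $M$) as an amalgamated free product or an HNN extension along the two inclusion-induced maps $\pi_1(S_{\pm}) \to \pi_1(M')$. A standard fact about such constructions is that if both boundary-inclusion maps are injective then so is the composed map $\pi_1(S) \to \pi_1(M)$. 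Hence the failure of $i_*$ to be injective forces at least one of the maps $\pi_1(S_{+}) \to \pi_1(M')$ or $\pi_1(S_{-}) \to \pi_1(M')$ to have non-trivial kernel.

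Now I would apply the Loop Theorem to $M'$ with the relevant boundary component, say $S_{+}$: since $\pi_1(S_{+}) \to \pi_1(M')$ is not injective, the Loop Theorem produces an embedded disk $D \subset M'$ with $\partial D \subset S_{+}$ and $[\partial D]$ non-trivial in $\pi_1(S_{+})$. Re-embedding $D$ inside $M$ and identifying $S_{+}$ with $S$ yields an embedded non-trivial compressing disk for $S$ in $M$, contradicting incompressibility.

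The main obstacle is the Loop Theorem itself, a deep classical result proved via Papakyriakopoulos's tower construction; treating it as a black box, the remainder of the argument is a bookkeeping exercise exploiting the graph-of-groups structure that $2$-sidedness places on $\pi_1(M)$. A minor subtlety is the case split between $S$ separating and non-separating $M$, but the reduction to the Loop Theorem hypotheses goes through essentially identically in both cases, so I would unify them by simply tracking the normal form in the HNN or amalgamated product presentation.
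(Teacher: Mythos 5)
The paper states this lemma without proof, citing \cite[Corollary~6.2]{Hempel76}, and the proof there is essentially the reduction to the Loop Theorem that you give. Your argument is correct and standard: cutting along the $2$-sided surface, using normal forms in the resulting amalgam or HNN decomposition to conclude that non-injectivity of $i_*$ forces one of the boundary inclusions $\pi_1(S_{\pm})\rightarrow\pi_1(M')$ to have kernel, and then applying the Loop Theorem to produce an embedded disk with essential boundary, which is a non-trivial compressing disk contradicting incompressibility.
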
 

The main reason that incompressible surfaces are relevant to the results in this paper is the ability to construct such surfaces in three manifolds from splittings of the fundamental group. 

We say that a $3$-manifold $M$ is \emph{fibred} if it admits the structure of a surface bundle over $S^1$. In such a case there exists a $2$-sided incompressible surface $S\subset M$ such that $\pi_1(M)\cong\pi_1(S)\rtimes\mathbb{Z}$. We say that a subgroup $H\leq\pi_1(M)$ is a \emph{surface fibre subgroup} of $\pi_1(M)$ if it is finitely generated and if $M$ admits a surface bundle structure over $S^1$ with fibre surface $S$ such that $H$ is the image of $\pi_1(S)$ in $\pi_1(M)$.

We say that $H$ is a \emph{virtual surface fibre subgroup} of $\pi_1(M)$ if there exists some finite sheeted cover $M'$ of $M$ whose fundamental group contains $H$ as a fibre subgroup. A finite sheeted covering of $M$ corresponds to a finite index subgroup of $\pi_1(M)$, so $\pi_1(M')$ will be a finite index subgroup of $\pi_1(M)$ of which $H$ is a surface fibre subgroup.

We have a rigidity result of Stallings that we can use to detect whether a subgroup of the fundamental group of compact and irreducible 3-manifold is a surface fibre subgroup. This result is often called \emph{algebraic fibring}.

\begin{theorem}\textit{\cite[Theorems~1, 2]{Stallings1961OnFC}}\label{thm:algfibr}
    Let $M$ be a compact and irreducible 3-manifold and let $H\unlhd\pi_1(M)$ be a finitely generated normal subgroup. If \[\pi_1(M)/H\cong \Z\text{ and }H\not\cong \Z/2\Z,\] then $M$ can be expressed as a fibred 3-manifold with fibre surface $S$. Furthermore, we can choose $S$ such that $\pi_1(S)=H$
\end{theorem}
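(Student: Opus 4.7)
The approach is to translate the algebraic input (a finitely generated normal subgroup $H$ with infinite cyclic quotient) into a geometric fibration over $S^1$, by producing a candidate fibre surface and then using finite generation to force the associated infinite cyclic cover to split as a product chain. To begin, since $S^1$ is a $K(\Z,1)$, the quotient homomorphism $\varphi\colon\pi_1(M)\twoheadrightarrow \Z$ is induced, up to homotopy, by a continuous map $f\colon M\to S^1$. After smoothing and choosing a regular value $p\in S^1$, the preimage $F=f^{-1}(p)$ is a compact properly embedded $2$-sided surface in $M$ whose class in $H_2(M;\Z)$ is Poincar\'e dual to $\varphi$. Using irreducibility of $M$, any $2$-sphere components of $F$ bound balls and can be discarded, and one can further apply compressions (via the loop theorem and Dehn's lemma) to replace $F$ by an incompressible surface without changing its homology class; by the lemma from Hempel quoted earlier in the paper, $\pi_1(F)$ then injects into $\pi_1(M)$.

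Next, pass to the infinite cyclic cover $\widetilde{M}\to M$ corresponding to $\ker\varphi=H$, so that $\pi_1(\widetilde M)=H$ is finitely generated. The map $f$ lifts to a proper map $\widetilde f\colon\widetilde M\to\mathbb R$, and the preimages $\widetilde F_n=\widetilde f^{-1}(n+\tfrac12)$ are disjoint translates, under the deck group $\Z$, of a single $2$-sided incompressible surface $\widetilde F\subset\widetilde M$. Cutting $\widetilde M$ along these translates produces a bi-infinite chain of compact pieces $\{X_n\}_{n\in\Z}$, each bounded by two copies of $\widetilde F$. The critical step is to use the finite generation of $H$ to show every $X_n$ is a product $\widetilde F\times[0,1]$: pick a finite generating set of $H$ supported in finitely many consecutive pieces, and then exploit the shift action of the deck group to iteratively push these generators into a single $X_n$; incompressibility together with Van Kampen's theorem then forces a surjection $\pi_1(\widetilde F)\twoheadrightarrow\pi_1(X_n)$. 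Waldhausen's theorem for Haken manifolds upgrades this to a homeomorphism $X_n\cong\widetilde F\times[0,1]$, provided one excludes the twisted $I$-bundle degeneracy, which is exactly the role of the hypothesis $H\not\cong Z_2$.

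Once each piece of the cover is a product, the projection $\widetilde M\to M$ assembles into a locally trivial fibration $M\to S^1$ with fibre $F$, and the identification $\pi_1(F)=H$ holds by construction. The main obstacle is precisely the middle step: without a topological input forcing a product structure, the candidate surface $F$ could represent the correct homology class while $M$ still failed to fibre, and it is the finite generation of $H$ that provides the algebraic lever rigidifying the cover. The hypothesis $H\not\cong Z_2$ eliminates the single low-complexity pathology (a twisted $I$-bundle whose orientation double cover is a genuine product) in which the surjection above would exist but fail to promote to a homeomorphism.
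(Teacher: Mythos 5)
The paper offers no proof of this statement: it is imported wholesale from Stallings \cite{Stallings1961OnFC} and used as a black box, so there is nothing internal to compare your argument against. Your sketch reproduces the classical Stallings strategy --- realise $\varphi\colon\pi_1(M)\twoheadrightarrow\Z$ by a map to $S^1$, compress a regular preimage to an incompressible surface carried by $\ker\varphi=H$, pass to the infinite cyclic cover, and use finite generation of $H$ to force the bi-infinite chain of pieces to be products --- and as an outline it is sound. Two steps are, however, attributed loosely. The surjection $\pi_1(\widetilde F)\twoheadrightarrow\pi_1(X_n)$ does not come from ``incompressibility plus van Kampen'' or from shifting generators into one piece; the actual mechanism is that finite generation makes $\pi_1(Y_n)\to\pi_1(\widetilde M)$ onto for some finite union $Y_n$ of consecutive pieces, incompressibility makes each $\pi_1(Y_n)\to\pi_1(Y_{n+1})$ injective, hence these maps stabilise to isomorphisms, and the fact that a proper amalgam $A*_C B$ with $C\neq B$ strictly contains $A$ then forces $\pi_1(\widetilde F)\to\pi_1(X_{n+1})$ to be an isomorphism. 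The upgrade to a homeomorphism $X_n\cong\widetilde F\times[0,1]$ is Stallings' own product theorem (essentially case (2) of the paper's Theorem~\ref{thm:HempelIBundles}), not Waldhausen rigidity, which postdates it; and the hypothesis $H\not\cong Z_2$ is there to exclude $\mathbb{RP}^2$ as the putative fibre (the product/$I$-bundle theorem explicitly excludes $P^2$ boundary), not a twisted $I$-bundle degeneracy --- the twisted case cannot arise once $\pi_1(\widetilde F)\to\pi_1(X_n)$ is known to be an isomorphism rather than merely of finite index. These are points of precision and attribution rather than fatal gaps in a blind reconstruction of a deep quoted theorem.
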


A similar result holds for amalgams in the case when $M$ is closed, due to Scott.

\begin{theorem}\textit{\cite[Theorem 2.3]{Scott72}}\label{thm:ScottAmalgams}
Let $M$ be a closed and orientable irreducible 3-manifold, and suppose $\pi_1(M)\cong A*_C B$ where $C\neq A$ or $B$ and $C$ is isomorphic to the fundamental group of a closed surface $S$. Then there is an incompressible embedding of $S$ in $M$ separating $M$ into $M_1$ and $M_2$ with $\pi_1(M_1)=A$, $\pi_1(M_2)=B$ and $\pi_1(S)=C$.
\end{theorem}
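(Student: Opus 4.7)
The plan is to realize the given amalgamated splitting geometrically via the Stallings--Epstein--Waldhausen machinery recalled in Theorem~\ref{Thm:CullerShalen}, and then use the very restrictive subgroup structure of closed surface groups to upgrade ``fundamental group contained in $C$'' to ``fundamental group equal to $C$'' for a single separating component. First I would apply Theorem~\ref{Thm:CullerShalen} directly to the graph of groups decomposition $\pi_1(M) \cong A *_C B$ (which has only one edge group and is non-trivial by the hypothesis $C \neq A, B$). This produces a non-empty system $\Sigma = \{\Sigma_1, \ldots, \Sigma_n\}$ of non-boundary-parallel $2$-sided incompressible surfaces embedded in $M$ such that each $\Img(\pi_1(\Sigma_i) \to \pi_1(M))$ is contained in a conjugate of $C$ and the fundamental group of each complementary component of $M \setminus \Sigma$ is contained in a conjugate of $A$ or $B$. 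Because $M$ is closed the surfaces $\Sigma_i$ are closed, and since we excluded the $2$-sphere from the definition of incompressibility together with the irreducibility of $M$, we may assume each $\Sigma_i$ has genus at least one.

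The next step, which is the main technical content, is to identify a component $\Sigma_i$ that is actually homeomorphic to $S$ with $\pi_1(\Sigma_i) = C$ (up to conjugation). By incompressibility, each $\pi_1(\Sigma_i)$ injects into $\pi_1(M)$, so after conjugation we regard $\pi_1(\Sigma_i)$ as a finitely generated subgroup of the closed surface group $C \cong \pi_1(S)$. I would invoke the classical fact that a finitely generated subgroup of a closed surface group is either free or of finite index; since $\pi_1(\Sigma_i)$ is the fundamental group of a closed surface of positive genus, it is not free, and therefore $\pi_1(\Sigma_i)$ has finite index in $C$ with the cover relation $\chi(\Sigma_i) = [C : \pi_1(\Sigma_i)]\cdot\chi(S)$. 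To force the index to be exactly one, I would run a minimality argument: among all incompressible surface systems $\Sigma$ satisfying the conclusion of Theorem~\ref{Thm:CullerShalen} for this splitting, pick one minimizing the total complexity $\sum_i |\chi(\Sigma_i)|$. If some $\pi_1(\Sigma_i)$ were a proper finite-index subgroup of $C$, a standard tower / innermost-disk / cut-and-paste argument (using irreducibility of $M$ and incompressibility of the $\Sigma_i$) would let one replace $\Sigma_i$ by a surface realising a smaller index, contradicting minimality.

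Once each $\Sigma_i$ has $\pi_1(\Sigma_i) = C$ on the nose, I would pass to the refined graph of groups decomposition of $\pi_1(M)$ obtained by cutting $M$ along all of $\Sigma$. By Bass--Serre theory there is a $\pi_1(M)$-equivariant simplicial map from the Bass--Serre tree of this refinement to the Bass--Serre tree of $A *_C B$, and each edge of the refinement maps to a single edge of the original tree. Using that each refined edge group is already $C$ (equal to, not contained in, the original edge group), together with the constraint that complementary-component fundamental groups lie in conjugates of $A$ or $B$, I would group the pieces of $M \setminus \Sigma$ on either side of a single chosen separating component $\Sigma_{i_0}$ to obtain submanifolds $M_1$ and $M_2$ whose fundamental groups coincide with $A$ and $B$ respectively, with $\pi_1(\Sigma_{i_0}) = C$. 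The amalgam (rather than HNN) form of the hypothesis ensures a separating $\Sigma_{i_0}$ can be chosen.

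I expect the minimality / cover-reduction step to be the main obstacle: producing the system from Theorem~\ref{Thm:CullerShalen} is essentially automatic, but ruling out the possibility that every system realises only a proper finite-index subgroup of $C$ as an edge group requires a genuinely topological modification of the surfaces within $M$, using irreducibility and incompressibility in an essential way. This is the step where closed surface edge groups are crucial; the argument would fail for, say, free edge groups, because free groups admit proper subgroups of every finite index without any topological rigidity forcing an exact match.
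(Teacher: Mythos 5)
The paper does not prove this statement at all: it is quoted verbatim as Theorem~2.3 of Scott's 1972 paper and used as a black box, so there is no in-paper argument to compare yours against. Judged on its own terms, your proposal has a genuine gap at exactly the step you flag as ``the main technical content.'' Applying Theorem~\ref{Thm:CullerShalen} to the splitting $A*_CB$ and observing that a closed incompressible $\Sigma_i$ gives a non-free, hence finite-index, subgroup of the closed surface group $C$ is fine; but the passage from ``proper finite-index subgroup of $C$'' to ``equal to $C$'' is the entire content of Scott's theorem, and you do not supply an argument for it. You assert that a ``standard tower / innermost-disk / cut-and-paste argument'' reduces the index of a $\chi$-minimal system, but no such standard surgery exists: compressions and innermost-disk swaps are unavailable (the $\Sigma_i$ are already incompressible and $M$ is irreducible, so disk swaps change nothing essential), and there is no cut-and-paste move that converts a surface carrying an index-$k$ subgroup of $C$ into one carrying an index-$(k-1)$ subgroup. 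Worse, your proposed complexity $\sum_i|\chi(\Sigma_i)|$ is identically zero when $S$ is a torus, so the minimisation scheme says nothing in that case even in principle. The actual proofs of this realisation theorem work in the covering space $M_C\to M$ corresponding to $C$, use asphericity of $M$ and the structure of open irreducible $3$-manifolds with closed-surface fundamental group (Stallings/Waldhausen, or Scott's compact core) to find an embedded copy of $S$ carrying all of $C$ in the cover, and only then project down and desingularise; none of that machinery appears in your outline.

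A secondary, smaller gap: even granting $\pi_1(\Sigma_{i_0})=C$, your final regrouping step needs the complementary pieces to have fundamental groups \emph{equal} to $A$ and $B$, whereas Theorem~\ref{Thm:CullerShalen} only gives containment in conjugates of the vertex groups; promoting containment to equality requires an equivariant-map/folding argument on the dual tree (or a van Kampen computation after discarding parallel redundant components of $\Sigma$), which you gesture at but do not carry out. I would recommend either citing Scott as the paper does, or restructuring the argument around the cover $M_C$ rather than around a minimal Culler--Shalen system.
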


More generally, the following powerful result essentially due to Stallings, Epstein and Waldhausen can be used to construct 2-sided incompressible surfaces from any graph of groups decomposition of the fundamental group of a closed and orientable 3-manifold. We will use the version that appears in \cite{Culler1983VarietiesOG}.

\begin{theorem}\textit{\cite[Proposition~2.3.1]{Culler1983VarietiesOG}}\label{Thm:CullerShalen}
    Let $M$ be a compact orientable 3-manifold. For any non-trivial graph of groups decomposition $(\Gamma, \mathfrak{G})$ of $\pi_1(M)$ with finitely many edge groups there exists a non-empty system $\overline{\Sigma}=\{\Sigma_1,...,\Sigma_n\}$ of compact $2$-sided incompressible surfaces embedded in $M$, none of which are boundary parallel, such that for all $i$, $\Img(\pi_1(\Sigma_i)\rightarrow \pi_1(M))$ is contained in some edge group of $(\Gamma, \mathfrak{G})$ and for all connected components $M_j$ of $M\backslash\overline{\Sigma}$, $\Img(\pi_1(M_j)\rightarrow \pi_1(M))$ is contained in some vertex  group of $(\Gamma, \mathfrak{G})$.

    Moreover, if $\mathcal{K}\subset\partial M$ is a connected component such that $\Img(\pi_1(K)\rightarrow \pi_1(M))$ is contained in a vertex group of $(\Gamma, \mathfrak{G})$, then we may take all surfaces $\Sigma_i\in\overline{\Sigma}$ to be disjoint from $\mathcal{K}$.
\end{theorem}
\begin{remark}
    In the paper of Culler and Shalen, the authors do not conclude that the surfaces they construct are compact. However, compactness is an immediate consequence of their proof --- the surfaces are constructed as the connected components of the continuous preimage in the compact manifold of a closed set, so are themselves compact.
\end{remark}

If we assume that $M$ is hyperbolic then the fundamental group of $M$ will have powerful properties. In particular, many of these arise in part from the following powerful theorem.

\begin{theorem}\cite[(K.18)]{AschenbrennerFriedlWilton12}\label{thm:MrelHyp}
    Let $M$ be a compact and orientable hyperbolic $3$-manifold with empty or toroidal boundary. Then $\pi_1(M)$ is hyperbolic relative to the conjugacy classes of the subgroups arising from the boundary components of $M$. 
\end{theorem}

The following theorem and its corollary are well known and the arguments standard, although we include brief proofs for completeness.
\begin{theorem}\label{thm:ired}
    Let $M$ be a compact and orientable hyperbolic $3$-manifold with empty or toroidal boundary. Then $M$ is irreducible.
\end{theorem}
\begin{proof}
    Let $M$ be as in the statement. Then the universal cover of the interior of $M$ is a copy of $\mathbb{H}^3$, which has vanishing second homotopy group as it is contractible. It follows that the second homotopy group of the interior of $M$ must also vanish, and the boundary of $M$ contains no spherical components, so $M$ is irreducible as required as it can contain no non-trivial sphere.
\end{proof}
\begin{corollary}\label{cor:freeSplit}
    A compact and orientable hyperbolic $3$-manifold $M$ with empty or toroidal boundary that contains an embedded $2$-sided incompressible disc must be a solid torus. In particular, as a consequence of Theorem~\ref{Thm:CullerShalen}, $\pi_1(M)$ admits a non-trivial splitting over a trivial subgroup if and only if $M$ is a solid torus.
\end{corollary}
\begin{proof}
For the first part of this corollary, assume that $M$ contains an embedded disc , so there exists an embedding of a disc $D$ into $M$ such that the boundary of $D$ is a non-trivial curve embedded in some boundary component $T$ of $M$. We will argue as in \cite[Page 14, (3)]{HatcherNotes} that $M$ is a solid torus. 

Indeed, $\partial D$ is non-separating in $T$ by the assumption that it is non-trivial, so the surgery of $T$ along $D$ can be used to create a sphere in $M$ that must then bound a ball $B$ in $M$ as $M$ is irreducible by Theorem~\ref{thm:ired}. This ball must lie on the same side of $T$ as $D$, as $T$ is a boundary component of $M$, and so reversing the surgery on $T$ glues two discs in the boundary of $B$ together, creating a solid torus as required.

For the second part of this corollary, we first observe that if $M$ is a solid torus, then $\pi_1(M)=\Z$, so does admit a non-trivial splitting over the trivial subgroup given by the natural action on the real line. For the other direction, assume that $M$ admits a non-trivial splitting over the trivial subgroup. Then by Theorem~\ref{Thm:CullerShalen}, $M$ contains an embedded $2$-sided incompressible surface $S$ with trivial fundamental group. It then follows from the classification of surfaces that $S$ is a sphere or a disc, so must be a disc as $M$ is once again irreducible by Theorem~\ref{thm:ired}. Thus $M$ contains an embedded $2$-sided incompressible disc, so $M$ is a solid torus by the first part of this corollary.
\end{proof}

We have the following lemma for certain irreducible $3$-manifolds, that we will apply to compact hyperbolic $3$-manifolds with empty or toroidal boundary using Theorem~\ref{thm:ired}.

\begin{lemma}\cite[(C.2)]{AschenbrennerFriedlWilton12}\label{lem:tf}
    Let $M$ be an orientable, irreducible $3$-manifold with empty or toroidal boundary and infinite fundamental group. Then $\pi_1(M)$ is torsion free.
\end{lemma}
Many of our methods will rely on a powerful result known as \emph{subgroup tameness} to classify subgroups of the fundamental group of hyperbolic $3$-manifolds. For a full definition of the concept of geometric finiteness we direct the reader to the book \emph{3-Manifold Groups} by Aschenbrenner, Friedl and Wilton \cite[Chapter~5]{AschenbrennerFriedlWilton12}. For our purposes however, we will be able to define geometrically finite subgroups simply as relatively quasi-convex subgroups of hyperbolic $3$-manifold groups using the following theorem of Hruska.

\begin{theorem}\textit{\cite[Corollary~1.6]{Hruska08}} \label{thm:GFisQC}
	Let $G=\pi_1(M)$ be the fundamental group of a compact and orientable hyperbolic 3-manifold with empty or toroidal boundary and let $H$ be a finitely generated subgroup of G. Then $H$ is geometrically finite in $G$ if and only if $H$ is relatively quasi-convex with respect to the subgroups of $G$ that correspond to the fundamental groups of the boundary components.
\end{theorem}

We have the following remarkable and deep result due to various authors.
\begin{theorem}\textit{\cite[Theorem~5.2]{AschenbrennerFriedlWilton12}} \label{thm:tameness}
	Let $M$ be a hyperbolic 3-manifold and let $H\leq \pi_1(M)$ be a finitely generated subgroup. Then either
	\begin{enumerate}
		\item $H$ is a virtual surface fibre subgroup, or;
		\item $H$ is geometrically finite.
	\end{enumerate}
\end{theorem}

\subsubsection{$I$-bundles}
We will require some results about a special type of 3-manifold that is geometrically ``nice'' in terms of its fundamental group. We will briefly define the concept of an $I$-bundle over a surface, and include a strong result pertaining to $3$-manifolds whose boundary components induce finite index subgroups of the fundamental group. For proof and further exposition see \cite[ Chapter~10]{Hempel76}.

\begin{definition}
Let $S$ be a closed surface and let $I$ be the standard $[0,1]$ interval. An $I$-bundle over $S$ is a fibre bundle over $S$ with fibre $I$.
\end{definition}

\begin{theorem} \cite[Theorem~10.5]{Hempel76}\label{thm:HempelIBundles}
    Let $M$ be a compact 3-manifold which contains no 2-sided real projective plane and let $S$ be a compact, connected, incompressible surface embedded in $\partial M$ such that $S$ is not homeomorphic to the disc $B^2$, the sphere $S^2$ or the projective plane $P^2$. If the index $[\pi_1(M)\colon i_*\pi_1(S)]$ is finite then either:
        \begin{enumerate}
            \item $\pi_1(M)\cong \Z$ and $M$ is a solid torus or Klein bottle;
            \item $[\pi_1(M)\colon\pi_1(S)]=1$ and $M=S\times I$ with $S=S\times\{0\}$;
            \item $[\pi_1(M)\colon\pi_1(S)]=2$ and $M$ is an $I$-bundle over a compact surface $\overline{S}$, with $S$ a two sheeted cover of $\overline{S}$. We call such an $I$-bundle \emph{twisted}.
        \end{enumerate}
\end{theorem}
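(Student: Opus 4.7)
The plan is to pass to the finite cover $p\colon \tilde M \to M$ corresponding to $H := i_*\pi_1(S) \leq \pi_1(M)$, of degree $n = [\pi_1(M):H]$, to show $\tilde M \cong S \times I$ via a Waldhausen-style rigidity theorem for Haken manifolds, and then to read off cases (2) and (3) by analysing the free deck action on a product. Case (1), where $\pi_1(M)$ is infinite cyclic, will be handled separately.

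First I would dispose of case (1). If $\pi_1(M) \cong \Z$, then $H$ is also infinite cyclic, so $S$ is a compact incompressible surface with cyclic fundamental group; since $S \not\cong S^2, P^2, B^2$, it must be an annulus or M\"obius band. The classification of compact $3$-manifolds with infinite cyclic fundamental group---using the sphere theorem, and using the no-$2$-sided-$P^2$ hypothesis to exclude fake pieces---then forces $M$ to be a solid torus or a solid Klein bottle.

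Henceforth assume $\pi_1(M)$ is non-cyclic. Because $S$ is $2$-sided incompressible, $i_*$ is injective and the canonical lift $\tilde S \subset \partial \tilde M$ of $S$ is homeomorphic to $S$ via $p$, with $\pi_1(\tilde M) = \pi_1(\tilde S)$ under inclusion. The surface $\tilde S$ is incompressible in $\tilde M$ (incompressibility passes to covers), so $\tilde M$ is Haken with incompressible boundary. I now invoke Waldhausen's rigidity theorem: if $N$ is a Haken $3$-manifold and $F \subset \partial N$ is an incompressible boundary component such that the inclusion induces an isomorphism $\pi_1(F) \to \pi_1(N)$, then $N \cong F \times I$. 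Applied to $\tilde M$ and $\tilde S$, this gives $\tilde M \cong S \times I$ with $\tilde S$ identified with $S \times \{0\}$.

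It remains to analyse the free action of the deck group $G := \pi_1(M)/H$ of order $n$ on $\tilde M = S \times I$. Let $K \leq G$ be the setwise stabiliser of $\tilde S = S \times \{0\}$. Since $K$ acts freely on $\tilde S$ and $p|_{\tilde S}\colon \tilde S \to S$ has degree $1$, we must have $|K| = 1$. Either $G = K$ is trivial, giving $n = 1$ and case (2) with $M = S \times I$; or the $G$-orbit of $\tilde S$ contains both boundary components of $\tilde M$, forcing $n = 2$ and $G$ to swap the two ends freely. In the latter case, any free end-swapping involution of $S \times I$ is isotopic to one of the form $(x,t) \mapsto (\sigma(x), 1-t)$ for some free involution $\sigma$ of $S$; the quotient $M = \tilde M/G$ is then a twisted $I$-bundle over $\overline S := S/\sigma$, with $S \to \overline S$ a two-sheeted cover, yielding case (3).

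The main obstacle is the reduction to Waldhausen's theorem, which requires verifying that $\tilde M$ is irreducible with no $S^2$ or $P^2$ boundary components; this is inherited from the analogous properties of $M$, which in turn follow from the sphere theorem combined with the no-$2$-sided-$P^2$ hypothesis (after possibly capping off $S^2$ boundary components of $M$ with $B^3$'s, which affects neither $\pi_1(M)$ nor the position of $S$ in the boundary). A secondary technical point is the structure theorem for free end-swapping involutions on $S \times I$, which needs a levelling isotopy to put such an involution into fibre-preserving form before one can identify $M$ with a bundle over $\overline S$.
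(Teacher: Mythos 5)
The paper does not prove this statement; it is imported verbatim as Theorem~10.5 of Hempel's book, so there is no in-paper argument to compare against and your attempt has to stand on its own. Your overall route --- pass to the covering $p\colon\tilde M\to M$ corresponding to $H=i_*\pi_1(S)$, show $\tilde M\cong S\times I$ by a product/rigidity theorem, then analyse how $M$ sits underneath --- is indeed the standard (and Hempel's) strategy. But your third paragraph contains a genuine gap: the covering corresponding to $H$ is regular only if $H$ is normal in $\pi_1(M)$, which is not a hypothesis. Without normality, $\pi_1(M)/H$ is not a group and there is no ``deck group $G$ of order $n$'' acting on $\tilde M$, so the orbit--stabiliser count that is supposed to force $n\le 2$ never gets off the ground. (Normality is a \emph{consequence} in the end --- index-two subgroups are normal --- but you cannot assume it while proving $n\le 2$.) The standard repair is to work with the restricted covering $p^{-1}(S)\to S$ instead: one component is $\tilde S$ of degree one, the remaining components are incompressible subsurfaces of $\partial(S\times I)$ of total degree $n-1$, and an Euler characteristic count ($\chi(\tilde M)=n\chi(M)$, $\chi(\tilde M)=\chi(S)$, $2\chi(M)=\chi(\partial M)$) forces $n\le 2$ when $\chi(S)<0$, with the $\chi(S)=0$ surfaces (annulus, M\"obius band, torus, Klein bottle) handled separately.

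Two further points. First, you invoke Waldhausen's rigidity for an incompressible boundary \emph{component}, but the hypothesis only puts $S$ as a compact incompressible surface \emph{embedded in} $\partial M$; it may have nonempty boundary and be a proper subsurface of a boundary component (e.g.\ $S$ a once-punctured torus in $\partial(S\times I)$, whose boundary is a closed genus-two surface). You need the relative product theorem (Stallings; Hempel, Theorem~10.2) for a compact incompressible surface in the boundary carrying all of $\pi_1(M)$, not the closed-boundary-component version. Second, in case (3) you should obtain the twisted $I$-bundle structure on the quotient by a recognition theorem for the index-two situation (or an equivariant product structure), rather than by ``isotoping'' the involution into fibre-preserving form --- isotoping a free involution does not a priori preserve the homeomorphism type of its quotient unless the isotopy is performed equivariantly, so as written that step also needs justification. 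For the application actually made in this paper ($S$ closed and equal to a whole boundary component of each piece $M_1$, $M_2$), these issues are less visible, but the theorem as stated is more general and your argument does not yet cover it.
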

\begin{remark}
    All manifolds we consider in this paper will be orientable, and thus cannot contain 2-sided real projective planes due to the fact that, if $P^2$ is the real projective plane, $P^2\times[-1,1]$ is non-orientable. The condition that $M$ has no two-sided $P^2$ has however been included in the statement of Theorem~\ref{thm:HempelIBundles} for completeness.
\end{remark}

\subsection{A Geometric Condition for Acylindrical Arboreality}
In this section we will provide a proof of Theorem~\ref{thm:intromfd}.

We say that a manifold is \emph{closed} if it is compact and has empty boundary, and begin with the following lemma.

\begin{lemma} \label{lem:poisonVF}
    Let $M$ be a closed and orientable hyperbolic $3$-manifold and $H$ a virtual surface fibre subgroup of $G=\pi_1(M)$. If $A$ is a subgroup of $G$ that contains $H$ then either $A$ has finite index in $G$ or is itself a virtual surface fibre subgroup of $G$.
\end{lemma}
\begin{proof}
    Let $H$ be a non-trivial virtual surface fibre subgroup of $G$. Then by definition $H$ is finitely generated, and there exists $t\in G$ such that $F=H\rtimes \langle t\rangle$ is a finite index subgroup of $G$ corresponding to a finite sheeted cover of $M$ in which $H$ is a surface fibre subgroup. Let $A$ be a subgroup of $G$ that contains $H$. We separate into two cases. 
    
    \textbf{Case 1, $F\cap A=H$ : }In this case, $H$ has finite index in $A$ and is finitely generated, implying that $A$ is also finitely generated. Assume for contradiction that $A$ is not a virtual surface fibre subgroup of $G$. It then follows by Theorem~\ref{thm:tameness} that $A$ must be geometrically finite as it is finitely generated, and so by Theorems~\ref{thm:GFisQC} and~\ref{thm:FinRelH} we have that $A$ has finite relative height in $G$ with respect to the boundary subgroups of $G$, which are trivial as $M$ is closed. For all $0\neq j\in\Z$ we have that $t^j\notin A$, so the set of conjugates $\{t^j A t^{-j}\}_{j\in\mathbb{\Z}}$ is essentially distinct, and the intersection of all of these subgroup will contain $H$ as every power of $t$ fixes $H$ by conjugation. Thus, $H$ must be a peripheral or finite subgroup of $G$ by the fact that $A$ has finite relative height, and all of our peripheral subgroups are trivial in this case, so $H$ must be finite. The manifold $M$ is orientable with empty boundary and irreducible by Theorem~\ref{thm:ired}, and $G$ admits a non-trivial splitting so it must be infinite, so we may apply Lemma~\ref{lem:tf} to see that $G$ is torsion free, and $H$ in this case must be trivial. However, by Corollary~\ref{cor:freeSplit}, $\pi_1(M)$ admits a non-trivial splitting over the free subgroup if and only if $M$ is a solid torus, which contradicts the fact that $M$ is closed. It follows that $H$ cannot be trivial, leading to the desired contradiction. Thus, in the case where $A\cap F=H$ we must have that $A$ is a virtual surface fibre subgroup.
    
    \textbf{Case 2, $A\cap F$ contains $H$ as a proper subgroup : }Then there exist $0\neq l\in\Z$ and $h\in H$ such that $t^lh\in A\cap F$, but $h\in A\cap F$ by assumption that $A$ contains $H$, and so we must have that $t^l\in A\cap F$. It follows that $A\cap F$ contains the subgroup $\langle t^l, H\rangle$, which has finite index in $H\rtimes\langle t\rangle$, which in turn has finite index in $G$. Thus in this case $A$ contains the finite index subgroup $\langle t^l, H\rangle$ of $G$, and so $A$ must have finite index in $G$. Thus in both cases either $A$ is a virtual surface fibre subgroup of $G$ or $A$ has finite index in $G$, as required.
\end{proof}
We obtain the following result which categorises all closed and orientable hyperbolic 3-manifolds whose fundamental groups have a non-trivial splitting over a virtual surface fibre subgroup.

\begin{theorem}\label{thm:mfdCateg}
    Let $M$ be a closed and orientable hyperbolic $3$-manifold, and assume that $G=\pi_1(M)$ has a non-trivial graph of groups splitting over a virtual surface fibre subgroup $H$. Then either $M$ can be expressed as a surface bundle over the circle, and we can choose the fibre to have fundamental group equal to $H$, or $M$ is a pair of twisted $I$-bundles identified along their boundaries with $H$ equal to the subgroup of the fundamental group corresponding to the boundary component of either $I$-bundle. In particular, $H$ is normal in $G$ in both of these cases.
\end{theorem}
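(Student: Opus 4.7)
The plan is to reduce the splitting to a one-edge form, realise it geometrically as an embedded incompressible surface in $M$, and then use the virtual fibre hypothesis to force each component of the complement to be an $I$-bundle via Theorem~\ref{thm:HempelIBundles}.

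Since $H$ is a virtual fibre subgroup, it is the fundamental group of a closed surface (the fibre in some finite cover of $M$), and there is a finite-index subgroup $G'\leq G$ with $H\unlhd G'$ and $G'/H\cong\mathbb{Z}$. By collapsing the non-trivial splitting along a single edge whose edge group is a conjugate of $H$, we may reduce to $G\cong A*_H B$ or $G\cong A*_H$. In the amalgam case, Theorem~\ref{thm:ScottAmalgams} gives an embedded 2-sided incompressible surface $\Sigma\subset M$ with $\pi_1(\Sigma)=H$ that separates $M$ into pieces $M_1, M_2$ whose fundamental groups are the vertex groups. In the HNN case, Theorem~\ref{Thm:CullerShalen} furnishes a non-separating 2-sided incompressible surface $\Sigma\subset M$ with $\pi_1(\Sigma)$ contained in a conjugate of $H$, and we will need to upgrade this containment to an equality.

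The central algebraic step is to show that $H$ has finite index in every vertex group. Fix a vertex group $A$ containing $H$; then $A\cap G'$ has finite index in $A$ and contains $H$, and $(A\cap G')/H$ embeds in $G'/H\cong\mathbb{Z}$, so it is either trivial or isomorphic to $k\mathbb{Z}$ for some $k\geq 1$. In the latter case we have $[G':A\cap G']=k<\infty$ and hence $[G:A]<\infty$; but any vertex group of a non-trivial one-edge splitting has infinite index in $G$ by standard Bass–Serre theory, a contradiction. Hence $A\cap G'=H$ and $[A:H]\leq[G:G']<\infty$.

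With finite index established, Theorem~\ref{thm:HempelIBundles} classifies each component $N$ of $M\setminus\Sigma$: because $\pi_1(\Sigma)=H$ is a closed surface group of genus $\geq 2$ rather than $\mathbb{Z}$, and sits with finite index in $\pi_1(N)$, the component $N$ must be either $\Sigma\times I$ or a twisted $I$-bundle. In the HNN case, $M\setminus\Sigma$ is connected with two boundary copies of $\Sigma$, so it is forced to be $\Sigma\times I$; reassembling exhibits $M$ as the mapping torus of a self-homeomorphism of $\Sigma$, i.e.\ a surface bundle over $S^1$ with fibre of fundamental group $H$. In the amalgam case, each of the two pieces has a single boundary copy of $\Sigma$, and a $\Sigma\times I$ piece would force $A=H$ and make the amalgam trivial, so both pieces must be twisted $I$-bundles identified along their common boundary $\Sigma$. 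The likely main obstacle is the geometric realisation in the HNN case: Culler–Shalen only controls $\pi_1(\Sigma)$ up to inclusion in $H$, and promoting this to equality — presumably by combining the finite-index conclusion with incompressibility and an analogue of Scott's argument for non-separating surfaces — is the most delicate part of the argument.
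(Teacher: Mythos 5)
Your amalgam case is sound and in fact streamlines the paper's argument: the paper first shows $H$ has finite index in one vertex group (by the same observation you phrase as $(A\cap G')/H\hookrightarrow G'/H\cong\mathbb{Z}$) and then needs a separate normaliser argument to get finite index in the other, whereas your remark that every vertex group of a non-trivial one-edge splitting has infinite index in $G$ handles both vertex groups symmetrically. From there Theorem~\ref{thm:ScottAmalgams} plus Theorem~\ref{thm:HempelIBundles} yields the two twisted $I$-bundles exactly as in the paper.

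The HNN case, however, contains a genuine gap, and it is precisely the one you flag without resolving: Theorem~\ref{Thm:CullerShalen} only produces a system of incompressible surfaces whose fundamental groups are \emph{contained in} a conjugate of $H$, and nothing in your sketch promotes this containment to equality or shows that the resulting surface realises the given HNN splitting. Knowing $[G_v:H]<\infty$ does not let you apply Theorem~\ref{thm:HempelIBundles} to the complement of the Culler--Shalen surface, because the fundamental groups of the complementary pieces are likewise only contained in vertex groups, and the system may be disconnected and induce a different splitting altogether. The paper avoids geometry here entirely: using Britton's lemma together with the fact that $\gamma^k\in F$ normalises $H$ for some $k>0$, it shows that the stable letter $\gamma$ itself normalises $H$; it then shows $\langle H,\gamma\rangle$ has finite index in $G$, hence $H=\langle H,\gamma\rangle\cap G_v$ has finite index in $G_v$, so $G_v$ is itself a virtual fibre subgroup, and a second application of Britton's lemma forces $H=G_v$. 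This gives $G=H\rtimes\langle\gamma\rangle$ abstractly, at which point Stallings' fibration theorem (Theorem~\ref{thm:algfibr}), already quoted in the paper, supplies the surface bundle structure with fibre group exactly $H$, with no need for Culler--Shalen or for a non-separating analogue of Scott's theorem. To complete your proof you would need either to reproduce this algebraic reduction or to supply, with proof or reference, the realisation theorem for HNN splittings over closed surface groups that your sketch presupposes.
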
 

\begin{proof}
    Let $H$ be a non-trivial virtual surface fibre subgroup of $G$. Then by definition $H$ is finitely generated and there exists $t\in G$ such that $F=H\rtimes \langle t\rangle$ is a finite index subgroup of $G$ corresponding to a finite sheeted cover of $M$ in which $H$ is a surface fibre subgroup.
    
    Now assume that $G$ splits non-trivially over $H$, so there exists a non-trivial graph of groups decomposition $(\Gamma, \mathfrak{G})$ of $G$ with one orbit of edges, whose stabilisers are conjugates of $H$. We split into two cases, based on whether $\Gamma$ has $1$ or $2$ vertices.

    \bigskip\textbf{Case 1, $\Gamma$ has 2 vertices : }In this case we can express $G$ as a non-trivial amalgam $A*_{H}B$ of two subgroups $A$ and $B$ along $H$. In this case, we claim that the subgroup $H$ must have finite index in both $A$ and $B$. Indeed, we first observe that as the amalgam decomposition $A*_HB$ is non-trivial by hypothesis on $H$, neither $A$ nor $B$ can have finite index in $G$. Therefore both $A$ and $B$ must be virtual surface fibre subgroups of $G$ by Lemma~\ref{lem:poisonVF}, so the normalisers $N_G(A)$ and $N_G(B)$ of $A$ and $B$ in $G$ must have finite index in $G$. By elementary properties of amalgamated free products and non-triviality of the amalgam $A*_HB$, the normaliser $N_G(A)$ of $A$ in $G$ must have that $N_G(A)\cap B=H$, so it follows from the fact that $N_G(A)$ has finite index in $G$ that $H=N_G(A)\cap B$ must also be finite index in $B$. Similarly $H$ will have finite index in $A$, and the claim is proved.
         
    The subgroup $H$ is a virtual surface fibre subgroup so it must be isomorphic to the fundamental group of a closed surface, and $M$ is irreducible by Theorem~\ref{thm:ired}, so we may apply Theorem~\ref{thm:ScottAmalgams} to realise $H$ geometrically as the fundamental group of some incompressible surface $\Sigma$ in $M$. The manifold $M$ cut along $\Sigma$ then has two components $M_1$, with $\pi_1(M_1)=A$, and $M_2$, with $\pi_1(M_2)=B$, both 3-manifolds whose fundamental groups are virtually $\pi_1(\Sigma)$. In particular, $\Sigma$ must be $2$-sided as it separates $M$, so by the remark after Theorem~\ref{thm:HempelIBundles} $\Sigma$ is not a real projective plane, and more generally neither $M_1$ nor $M_2$ can contain a $2$-sided projective plane by the same remark. Furthermore, if $\Sigma$ were a disc we would be able to conclude by Corollary~\ref{cor:freeSplit} that $M$ were a solid torus, contradicting the fact that $M$ is closed, and if $\Sigma$ were a sphere then Theorem~\ref{thm:ired} would imply that $\Sigma$ bounds a ball, contradicting incompressibility. Thus $\Sigma$ is not a disc, sphere, or real projective plane. Therefore, using again the fact that $M$ is irreducible and so each of $M_1$ and $M_2$ must be irreducible, we may invoke theorem~\ref{thm:HempelIBundles} on $M_1$ (and respectively on $M_2$) to see that one of the following occurs.
    
    \begin{enumerate}
        \item $\pi_1(M_1)\cong\Z$ and $M_1$ is a solid torus or Klein bottle. This however cannot occur, as in both of these cases the boundary fails to be $\pi_1$-injective, whereas we know by construction of $\Sigma$ that the inclusion of $\Sigma$ into $M$ (and therefore $M_1$ and $M_2$) is $\pi_1$-injective.
        \item $[\pi_1(M_1):\pi_1(\Sigma)]=1$, which cannot occur as our original splitting was non-trivial.
        \item $[\pi_1(M_1):\pi_1(\Sigma)]=2$ and $M_1$ is a twisted $I$-bundle with boundary $\Sigma$.
    \end{enumerate}
    
    Thus we must have that both $M_1$ and $M_2$ are twisted $I$-bundles with boundary $\Sigma$ as required. In this case, $H$ has index two in both $\pi_1(M_1)=A$ and $\pi_1(M_2)=B$, so is normal in both of these groups. Thus $H$ must be normal in $G=A*_HB$ as required.

    \textbf{Case 2, $\Gamma$ has exactly $1$ vertex : } Call this unique vertex $v$ $v$. We can then express $G$ as an HNN extension of $G_v$ along $H$, so call the stable letter of this extension $\gamma$. We claim that in this case the subgroup $G_v$ must be equal to $H$, and must be a genuine surface fibre subgroup of $G$, i.e. there exists some fibration of $M$ over the circle whose fibre is a surface with fundamental group $G_v=H$. Indeed, we must have that either $G_v$ is either finite index in $G$ or that $G_v$ is a virtual surface fibre subgroup of $G$ by Lemma~\ref{lem:poisonVF}. Since $G$ is an HNN extension with base group $G_v$, the subgroup $G_v$ cannot have finite index, and so we must in fact have that $G_v$ is a virtual surface fibre subgroup of $G$. It follows that there exists $t_v\in G$ such that $F_v=G_v\rtimes\langle t_v\rangle$ is a finite index subgroup of $G$ corresponding to a cover of $M$ in which $G_v$ is a surface fibre subgroup. 
    
    Thus there exists some integer $l>0$ such that $\gamma^l\in F_v$, and so both $\gamma^l$ and $\gamma^{-l}$ normalise $G_v$. However, by definition of $\gamma$ as the stable letter in the HNN extension of $G_v$ along $H$ and by elementary properties of HNN extensions of groups, this can only occur if $H=G_v$. It follows that $G=H\rtimes\langle \gamma\rangle$, $H$ is finitely generated by definition of a surface fibre subgroup. Furthermore, $G$ admits a non-trivial splitting and is therefore infinite, so is torsion free by Lemma~\ref{lem:tf}, so we have that $H\leq G$ is torsion free and thus $H\not\cong \Z/2\Z$. We may therefore invoke Theorem~\ref{thm:algfibr} (algebraic fibring) to see that $H=G_v$ is a surface fibre subgroup of $G$. The claim is therefore proved, and so in this case $H$ is a surface fibre subgroup so is normal by definition as required.
    
    It follows that either the splitting has two vertices, in which case we have proved that $M$ can be identified with a pair of twisted $I$-bundles identified along their boundaries with $H$ equal to the subgroup of the fundamental group corresponding to the boundary component of either $I$-bundle, or the splitting has one vertex, and $M$ is a fibred manifold where we can choose the fibre of $M$ to have fundamental group $H$ as required. Also as required, in both of these cases $H$ is normal in $G$.
\end{proof}

We can now prove Theorem~\ref{thm:mainHypMfd}, which we restate here.
\intromfdclass*

\begin{proof}
    First assume that $M$ is a solid torus. Then $\pi_1(M)=\Z$, and so $\pi_1(M)$ is not acylindrically arboreal. The manifold $M$ also contains no $2$-sided incompressible closed surface, so the result follows in this case. We will thus assume from now on that $M$ is not a solid torus.

    Now let $G=\pi_1(M)$. Then as $M$ is hyperbolic $G$ admits a natural action by isometries on $\mathbb{H}^3$ whose quotient is the interior of $M$. As such, we will refer to an element of $g\in G$ as \emph{hyperbolic} if it fixes exactly two points in the ideal boundary $\partial\mathbb{H}^3$ or \emph{peripheral} if it fixes exactly one point in $\partial \mathbb{H}^3$. By Theorem~\ref{thm:MrelHyp}, $G$ is hyperbolic relative to the subgroups that arise from the boundary components of $M$, so this definition of peripheral will agree with the definition of peripheral above.
    
    For the if direction assume that $M$ contains an embedded $2$-sided incompressible geometrically finite closed surface $\Sigma$ that is not isotopic to any boundary component of $M$. Then $\Sigma$ is not a disc as it is closed, $\Sigma$ is not a projective plane by the remark following Theorem~\ref{thm:HempelIBundles}, and $\Sigma$ is not a sphere by Theorem~\ref{thm:ired}, so $\pi_1(\Sigma)$ (and hence $G$) is infinite by the classification of surfaces (see \cite[Theorem~4.14]{kinsey1997topology}, for example). Furthermore, $\pi_1(\Sigma)$ is finitely generated as $\Sigma$ is closed. Since $\Sigma$ is geometrically finite we have that $H=\pi_1(\Sigma)$ is relatively quasi-convex by Theorem~\ref{thm:GFisQC}, and so by Theorem~\ref{thm:FinRelH} there exists a natural number $n_H$ such that the intersection of $n_H$ essentially distinct conjugates of $H$ in $G$ is either finite or a peripheral subgroup of $G$. In fact, it follows directly from the definition of relatively quasi-convex and by considering the action of $G$ on $\partial\mathbb{H}^3$ that $H$ is hyperbolic relative to the subgroups that arise as the intersections of $\Sigma$ with $\partial M$, and that all peripheral elements of $G$ that are included in $H$ are conjugate in $H$ to some boundary component of $\Sigma$. Since $\Sigma$ is closed and therefore does not intersect with $\partial M$, $H$ contains no peripheral elements and so the intersection of any collection of $n_H$ essentially distinct conjugates of $H$ in $G$ must be finite. However, $G$ is infinite and $M$ is orientable and irreducible, and so Lemma~\ref{lem:tf} tells us that $G$ and hence $H\leq G$ is torsion free, and thus the intersection of any collection of at least $n_H$ essentially distinct conjugates of $H$ in $G$ must be trivial.

    Let $(\Gamma, \mathfrak{G})$ be the splitting of $G$ induced by cutting $M$ along $\Sigma$, $T=T((\Gamma, \mathfrak{G})$ its Bass-Serre tree, and let  $p$ be a path in $T$ of length at least $n_H$. The stabiliser for this path will be the intersection of the stabilisers of its constituent edges, which are essentially distinct conjugates of $H$ in $G$, and so $\Ps_G(p)$ is trivial, and the action of $G$ on $T$ is therefore $(n_H, 1)$-acylindrical. The group $G$ is not virtually cyclic as it contains the fundamental group of $\Sigma$, a closed surface that is not a sphere or a projective plane. Thus the action of $G$ on $T$ is not lineal by Theorem~\ref{Thm:categorisedActions}, and to show that it is non-elementary it only remains to check that it is non-trivial.
 
    Assume for contradiction that this splitting is trivial, so $G_v=G$ for some vertex $v\in V(\Gamma)$. The graph $\Gamma$ has one edge, which corresponds to $\Sigma$, and so $\Gamma$ either has one vertex or two vertices. The former of these cannot occur, else $G$ would be an HNN extension of $G_v$, so $G_v\neq G$. We may thus assume that the splitting $(\Gamma, \mathfrak{G})$ is an amalgam decomposition $G\cong G_v*_{H}A$ for some subgroup $A$,  with $\pi_1(\Sigma)=H=A$. Since $\Sigma$ is $2$-sided it must locally separate $M$, and so since the cut along $\Sigma$ induces an amalgam we must have that $\Sigma$ separates $M$ into two components $M_1$ and $M_2$ with $A=\pi_1(M_1)$ and $G_v=\pi_1(M_2)$. By assumption we have that $\pi_1(\Sigma)=\pi_1(M_1)$, $\Sigma$ is a connected component of $\partial M_1$ that is not a sphere, projective plane or disc, and $M_1$ contains no $2$-sided projective plane as it is a submanifold of $M$ which itself contains no such subsurface. Furthermore, $M_1$ is irreducible as any incompressible sphere in $M_1$ would represent an incompressible sphere in $M$, and so we may apply Theorem~\ref{thm:HempelIBundles} to see that $M_1\cong \Sigma\times I$. This implies that $M_1$ has exactly one other boundary component $\Sigma'$ that is isotopic to $\Sigma$, and we therefore have that $\Sigma$ is isotopic to $\Sigma'$ in $M$. This gives the desired contradiction, as $\Sigma$ is not isotopic to any boundary component of $M$ by hypothesis. 
    
    It follows that this splitting is non-trivial, so the action of $\pi_1(M)$ on $(\Gamma, \mathfrak{G})$ is non-elementary by Theorem~\ref{Thm:categorisedActions} as required, and thus $\pi_1(M)$ is an acylindrically arboreal group.

    \bigskip For the only if direction assume that $\pi_1(M)$ is acylindrically arboreal and let $(\Gamma, \mathfrak{G})$ be a graph of groups splitting for $G=\pi_1(M)$ such that the action of $G$ on $T=T(\Gamma, \mathfrak{G})$ is non-elementary and acylindrical, which by Lemma~\ref{lem:AAimpliesFinite} we can assume has exactly one orbit of edges. The structure of our argument can be outlined in the following steps.
    \begin{enumerate}
        \item First we will apply Theorem~\ref{Thm:CullerShalen} to the splitting $(\Gamma, \mathfrak{G})$ to construct a set $\overline{\Sigma}=\{\Sigma_1,...,\Sigma_n\}$ of closed and $2$-sided incompressible surfaces in $M$.
        \item We will assume for contradiction that the fundamental groups of all of these surfaces are individually virtual surface fibre subgroups of $G$, and we will use this assumption to infer that at least one edge group of the splitting $(\Gamma, \mathfrak{G})$ must have been a virtual surface fibre subgroup of $G$.
        \item Finally we will apply Theorem~\ref{thm:mfdCateg} to see that such an edge group must be normal in $G$. This will lead to a contradiction to the fact that our original splitting $(\Gamma', \mathfrak{G}')$ was acylindrical.
    \end{enumerate}

    \textbf{Step 1 : }The action of $G$ on $T$ has exactly one orbit of edges by assumption, so we may apply Theorem~\ref{Thm:CullerShalen} to see that there exists a non-empty system \[\overline{\Sigma}=\{\Sigma_1,...,\Sigma_n\}\] of compact $2$-sided incompressible surfaces embedded in $M$ none of which are boundary parallel such that $\Img(\pi_1(\Sigma_i)\rightarrow \pi_1(M))$ for all $i$ is contained in some edge group of $(\Gamma, \mathfrak{G})$ and $\Img(\pi_1(M_j)\rightarrow \pi_1(M))$ for all connected components $M_j$ of $M\backslash\Sigma$ is contained in some vertex group of $(\Gamma, \mathfrak{G})$. Furthermore, since the boundary components of $M$ are tori, their subgroups are copies of $\Z^2$ and so must act elliptically on $T(\Gamma, \mathfrak{G})$ by Lemma~\ref{lem:BrokenProducts} using the acylindricity assumption. Thus by the second part of Theorem~\ref{Thm:CullerShalen} we can assume that all of our $\Sigma_i$'s are disjoint from the boundary of $M$, and therefore closed.

    Assume first that $M$ has boundary, so any finite sheeted cover of $M$ must also have boundary. Thus, if some cover $M'$ of $M$ fibres it must fibre over a surface with at least one boundary component. All virtual surface fibre subgroups of $G$ are therefore the fundamental groups of orientable surfaces with boundary, so must be free groups as such surfaces admit deformation retractions onto graphs. However, each surface in $\overline{\Sigma}$ may be assumed to be closed by the previous paragraph, and these surfaces are incompressible so cannot be spheres by Theorem~\ref{thm:ired}. The fundamental group of any surface in $\overline{\Sigma}$ will therefore be a finitely generated (from the fact that each surface is compact) group that is not a free group owing to the fact that such groups have cohomological dimension $2$ \cite[Chapter~VIII.2, Example~3]{brown_cohomology} but free groups have cohomological dimension $1$ \cite[Chapter~VIII.2, Example~2]{brown_cohomology} (see \cite[Chapter~VIII]{brown_cohomology} for a definition of cohomological dimension). Thus no surface in $\overline{\Sigma}$ has a fundamental group that includes into $G$ as a virtual surface fibre subgroup, and so they must all be geometrically finite by Theorem~\ref{thm:tameness}. The case where $M$ has non-empty boundary follows, and we will therefore assume for the remainder of this proof that $M$ is closed.

    \textbf{Step 2 : } Assume for contradiction that for all $i\in\{1,...,n\}$ the subgroup $\pi_1(\Sigma_i)$ is a virtual surface fibre subgroup of $G$, and fix some $i$. By construction $\pi_1(\Sigma_i)$ will be contained in some edge stabiliser of the action of $G$ on $T$, which will be some conjugate $G_e'$ of $G_e$ where $e$ is the unique edge of $\Gamma$. The subgroup $G_e'$ is an edge stabiliser in a non-elementary and thus non-trivial splitting, so cannot have finite index in $G$, and therefore we may invoke Lemma~\ref{lem:poisonVF} to see that $G_e'$ is a virtual surface fibre subgroup of $G$, and then $G_e$ is conjugate to $G_e'$ in $G$, so it must also be a virtual surface fibre subgroup.

    \textbf{Step 3 : }We now observe that the splitting of $G$ over $G_e$ is a non-trivial splitting of the fundamental group of a closed and orientable $3$-manifold with empty boundary over a virtual surface fibre subgroup, so we may invoke Theorem~\ref{thm:mfdCateg} to see that $G_e$ is a normal subgroup of $G$. Thus, the stabiliser of any lift of $e$ into $T$ must be equal to $G_e$, and since $e$ is the unique edge of $\Gamma$ every edge of $T$ is a lift of $e$ and $G_e$ fixes every edge of $T$ and must lie in the kernel of the action of $G$ on $T$. However, the tree $T$ is unbounded as it is the Bass-Serre tree of a non-trivial splitting, so this contradicts the acylindricity of the action.

    It follows that our original assumption that for $i\in\{1,..,n\}$, $\pi_1(\Sigma_i)$ is a virtual surface fibre subgroup of $G$ was false, and so by Theorem~\ref{thm:tameness} (subgroup tameness) and the fact that each $\pi_1(\Sigma_i)$ is the fundamental group of a closed surface and thus finitely generated, there must exist $i$ such that $\pi_1(\Sigma_i)$ is a geometrically finite subgroup. The surface $\Sigma_i$ was constructed using Theorem~\ref{Thm:CullerShalen}, and so is not boundary parallel, and $M$ must contain an embedded $2$-sided incompressible closed geometrically finite subsurface that is not boundary parallel as required.\qedhere
\end{proof}
\subsection{Proof of Theorem~\ref{thm:intromfd}}
In this section we use Theorems~\ref{thm:mainHypMfd} and~\ref{thm:mfdCateg} to prove Theorem~\ref{thm:intromfd}, and then briefly show that a similar strong classification result cannot extend to the case where we allow boundary. Theorem~\ref{thm:intromfd} is restated as follows.
\intromfd*
\begin{proof}
    Let $M$ be a closed and orientable hyperbolic $3$-manifold with fundamental group $G$. That (1) implies (2) is a standard result for hyperbolic groups, see \cite{Linton22} for example, but we include a brief proof for completeness. Assume that $G$ admits a non-elementary quasi-convex splitting over some quasi-convex subgroup $H$, with corresponding Bass--Serre tree $T$, and recall that by Theorem~\ref{thm:MrelHyp} $G$ is hyperbolic relative to the trivial subgroup. Then $H$ is by definition relatively quasi-convex with respect to the trivial subgroup, and so by Theorem~\ref{thm:FinRelH} there exists a natural number $n_H$ such that the intersection of at least $n_H$ distinct conjugates of $H$ must be a finite or peripheral subgroup. However, all of our peripheral subgroups are trivial in this case, so such an intersection must be finite, and by Lemma~\ref{lem:tf} and the fact that $G$ admits a non-elementary and thus non trivial quasi-convex splitting and is therefore torsion free we must have that $G$ is torsion free. Thus the intersection of at least $n_H$ essentially distinct conjugates of $H$ is trivial. Now let  $p$ be a path in $T$ of length at least $n_H$. The stabiliser for this path will be the intersection of the stabilisers of its constituent edges, which are essentially distinct conjugates of $H$ in $G$, and so $\Ps_G(p)$ is trivial, and the action of $G$ on $T$ is therefore $(n_H, 1)$-acylindrical. This action was non-elementary by assumption, and so it follows that $G$ is acylindrically arboreal, and (1) implies (2). 
    
    Now assume that (2) holds. By Theorem~\ref{thm:mainHypMfd}, $M$ contains an embedded $2$-sided incompressible closed subsurface $\Sigma$ that is not isotopic to any boundary component of $M$, and such that the image of the natural inclusion $\pi_1(\Sigma)\hookrightarrow\pi_1(M)$ is geometrically finite. Therefore, by Theorem~\ref{thm:GFisQC}, $\pi_1(\Sigma)$ must be relatively quasi-convex with respect to the boundary subgroups of $G$. The manifold $M$ has empty boundary, so $G$ is hyperbolic and $\pi_1(\Sigma)$ is a quasi-convex subgroup of $G$. It follows that the cut of $G$ along $\Sigma$ will induce a quasi-convex splitting of $G$. This splitting will be non-trivial by construction of $\Sigma$, and will be acylindrical by the proof that (1) implies (2). The group $G$ is not virtually cyclic as it contains a non-trivial surface subgroup, and so by Theorem~\ref{Thm:categorisedActions} the cut of $M$ along $\Sigma$ induces a non-trivial splitting, which will then be a non-trivial quasi-convex splitting as required. It follows that (2) implies (1), and so (1) and (2) are equivalent as claimed. 
    
    That (2) implies (3) follows by a simple contrapositive argument, so it only remains to show that (3) implies (2). We proceed once again by the contrapositive. As such, assume that $G$ is not acylindrically arboreal, so by Theorem~\ref{thm:mainHypMfd} $M$ contains no closed $2$-sided geometrically finite incompressible subsurface. If no non-trivial splitting of $G$ exists then $G$ has (FA$^-$), so assume that $G$ admits a non-trivial graph of groups decomposition $(\Gamma, \mathfrak{G})$. Let $T$ be the Bass--Serre tree of this splitting. Similarly to the proof of Theorem~\ref{thm:mainHypMfd}, we will show that the splitting $(\Gamma, \mathfrak{G})$ must have at least one edge whose associated group is a virtual surface fibre subgroup of $G$, and then use the categorisation in Theorem~\ref{thm:mfdCateg} to show that the existence of such an edge group guarantees a line $L$ in $T$ fixed setwise by the entire action of $G$ on $T$. 

    Let $e$ be any edge of $(\Gamma, \mathfrak{G})$ such that the splitting $(\Gamma', \mathfrak{G}')$ of $G$ over $G_e$ constructed in the proof of Lemma~\ref{lem:AAimpliesFinite} is non-trivial. Then $\Gamma'$ has exactly one edge (labelled $e$) and either one or two vertices, so by Theorem~\ref{Thm:CullerShalen} there exists at least one compact $2$-sided incompressible surface $\Sigma$ in $M$ that is not boundary parallel such that $\pi_1(\Sigma)\leq G$ is contained in some conjugate $G_e'$ of $G_e$.

    The subgroup $\pi_1(\Sigma)$ is the fundamental group of a compact surface so is finitely generated. Furthermore, by assumption and by Theorem~\ref{thm:mainHypMfd}, $\pi_1(\Sigma)$ is not geometrically finite so therefore must be a virtual surface fibre subgroup by Theorem~\ref{thm:tameness} (subgroup tameness). Thus, by Lemma~\ref{lem:poisonVF}, $G_e'$ is either a virtual surface fibre subgroup of $G$ or has finite index in $G$. The latter case cannot occur as $G_e$ is an edge group in a non-trivial splitting and so is not finite index and hence not conjugate to a finite index subgroup, and so $G_e'$ is in fact a virtual surface fibre subgroup of $G$. Therefore $G_e$ is conjugate to some virtual surface fibre subgroup of $G$ and is therefore is a virtual surface fibre subgroup itself.

    By Theorem~\ref{thm:mfdCateg} it follows that either $M$ fibres over $S^1$ with fibre equal to a surface with fundamental group $G_e$, or $M$ is a pair of twisted $I$-bundles $M_1$ and $M_2$ identified along their boundary, with $H$ equal to the subgroup of the fundamental group corresponding to the boundary component of either $I$-bundle, and in both cases $G_e$ is normal in $G$. Thus $G_e$ must act trivially on the convex hull $\overline{G\cdot e}$ of the orbit of $e$ in $T$, the Bass--Serre tree of our original splitting $(\Gamma, \mathfrak{G})$.
    
    The action of $G$ on $T$ must fix $\overline{G\cdot e}$ setwise, and the induced action of $G$ on the subtree $\overline{G\cdot e}$ factors through the quotient $G/G_e$. In the first case, where $G_e$ is a surface fibre subgroup of $G$, this quotient will be a copy of $\Z$ by definition, so must act on $\overline{G\cdot e}\subseteq T$ with a fixed point or fixed line, and thus the action of $G$ on $T$ will have a fixed point or fixed line. In the second case, where $M$ is two twisted $I$-bundles glued along their boundaries, we have that
    \[G/G_e\cong \left(\frac{\pi_1(M_1)}{G_e}\right)*_{G_e/G_e}\left(\frac{\pi_1(M_2)}{G_e}\right)\cong D_\infty,\]
    with the last congruence following from the fact that $G_e$ has index two in both $\pi_1(M_1)$ and $\pi_2(M)$ by Theorem~\ref{thm:HempelIBundles}. The group $D_\infty$ is virtually cyclic so must act on $\overline{G\cdot e}\subseteq T$ with a fixed point or fixed line, and thus the action of $G$ on $T$ will have a fixed point or fixed line.

    Therefore, since $(\Gamma, \mathfrak{G})$ was chosen arbitrarily it follows that $G$ must have property (FA$^-$) as required, and thus (3) implies (2) and all three conditions are equivalent as required.
\end{proof}

We finish this section by showing by means of an example that Theorem~\ref{thm:intromfd} cannot be extended to include the compact case.

\begin{example}
    Let $M=S^3-4_1$, the figure 8 knot complement in $S^3$, which fibres over the circle with fibre homeomorphic to a punctured torus. Then $M$ is hyperbolic and has the following properties.
    \begin{enumerate}
        \item By a result of Floyd and Hatcher \cite[Theorem 1.1]{FLOYD1982263} all closed incompressible embedded surfaces in $M$ are isotopic to the torus boundary component. The fundamental group $\pi_1(M)$ is therefore not acylindrically arboreal by Theorem~\ref{thm:mainHypMfd}.
        \item By the same theorem of Floyd and Hatcher, $M$ does contain several incompressible embedded surfaces, at least two of which are not isotopic to a fibre of the above fibration.
        \item Finally, the first singular homology group of $M$ is a copy of $\Z$, so any incompressible surface that induces a splitting of $M$ with a fixed line must be isotopic to the fibre of the above fibration.
    \end{enumerate}
    Properties (2) and (3) imply that $\pi_1(M)$ does not have (FA$^{-}$), as the cut along the surfaces in $M$ not isotopic to the fibre must give an interesting splitting. It follows that $M$ is a compact hyperbolic $3$-manifold with toroidal boundary such that $\pi_1(M)$ is not acylindrically arboreal, but $\pi_1(M)$ also does not have (FA$^-$).
\end{example}
\printbibliography

@article{Weidmann07,
    AUTHOR = {Weidmann, Richard},
     TITLE = {On accessibility of finitely generated groups},
   JOURNAL = {Q. J. Math.},
  FJOURNAL = {The Quarterly Journal of Mathematics},
    VOLUME = {63},
      YEAR = {2012},
    NUMBER = {1},
     PAGES = {211--225},
      ISSN = {0033-5606,1464-3847},
   MRCLASS = {20E08},
  MRNUMBER = {2889188},
MRREVIEWER = {Llu\'{\i}s\ Bacardit},
       DOI = {10.1093/qmath/haq038},
       URL = {https://doi.org/10.1093/qmath/haq038},
}

@article{Osin13,
    AUTHOR = {Osin, D.},
     TITLE = {Acylindrically hyperbolic groups},
   JOURNAL = {Trans. Amer. Math. Soc.},
  FJOURNAL = {Transactions of the American Mathematical Society},
    VOLUME = {368},
      YEAR = {2016},
    NUMBER = {2},
     PAGES = {851--888},
      ISSN = {0002-9947,1088-6850},
   MRCLASS = {20F67 (20F65)},
  MRNUMBER = {3430352},
MRREVIEWER = {Alessandro\ Sisto},
       DOI = {10.1090/tran/6343},
       URL = {https://doi.org/10.1090/tran/6343},
}

@article{minasyanOsin13,
    AUTHOR = {Minasyan, Ashot and Osin, Denis},
     TITLE = {Acylindrical hyperbolicity of groups acting on trees},
   JOURNAL = {Math. Ann.},
  FJOURNAL = {Mathematische Annalen},
    VOLUME = {362},
      YEAR = {2015},
    NUMBER = {3-4},
     PAGES = {1055--1105},
      ISSN = {0025-5831,1432-1807},
   MRCLASS = {20F67 (20E06 20E08 20F65 57M05)},
  MRNUMBER = {3368093},
MRREVIEWER = {Mohammad\ Shahryari},
       DOI = {10.1007/s00208-014-1138-z},
       URL = {https://doi.org/10.1007/s00208-014-1138-z},
}

@article{WiltonZalesskii08,
    AUTHOR = {Wilton, Henry and Zalesskii, Pavel},
     TITLE = {Profinite properties of graph manifolds},
   JOURNAL = {Geom. Dedicata},
  FJOURNAL = {Geometriae Dedicata},
    VOLUME = {147},
      YEAR = {2010},
     PAGES = {29--45},
      ISSN = {0046-5755,1572-9168},
   MRCLASS = {57N10 (20E18 20E26)},
  MRNUMBER = {2660565},
MRREVIEWER = {Thomas\ Koberda},
       DOI = {10.1007/s10711-009-9437-3},
       URL = {https://doi.org/10.1007/s10711-009-9437-3},
}

@book{stilwell2002trees,
  title={Trees},
  author={Serre, J.P.},
  isbn={3540101039},
  series={Springer Monographs in Mathematics},
  year={2002},
  publisher={Springer Berlin Heidelberg},
  note={Translated by J. Stilwell}
}

@article{Antolinminasyan11,
 AUTHOR = {Antol\'in, Yago and Minasyan, Ashot},
     TITLE = {Tits alternatives for graph products},
   JOURNAL = {J. Reine Angew. Math.},
  FJOURNAL = {Journal f\"{u}r die Reine und Angewandte Mathematik. [Crelle's
              Journal]},
    VOLUME = {704},
      YEAR = {2015},
     PAGES = {55--83},
      ISSN = {0075-4102,1435-5345},
   MRCLASS = {20F65 (03E10)},
  MRNUMBER = {3365774},
MRREVIEWER = {J\"{o}rg\ Lehnert},
       DOI = {10.1515/crelle-2013-0062},
       URL = {https://doi.org/10.1515/crelle-2013-0062},
}

@article{Valiunas20,
    AUTHOR = {Valiunas, Motiejus},
     TITLE = {On equationally {N}oetherian and residually finite groups},
   JOURNAL = {J. Algebra},
  FJOURNAL = {Journal of Algebra},
    VOLUME = {587},
      YEAR = {2021},
     PAGES = {638--677},
      ISSN = {0021-8693,1090-266X},
   MRCLASS = {20F70 (20E26)},
  MRNUMBER = {4309430},
MRREVIEWER = {Mohammad\ K.\ Azarian},
       DOI = {10.1016/j.jalgebra.2021.08.018},
       URL = {https://doi.org/10.1016/j.jalgebra.2021.08.018},
}

@phdthesis{Green1990GraphPO,
  title={Graph products of groups},
  author={Eli R. Green},
  year={1990},
  school={The University of Leeds}
}

@book{dicks2011groups,
  title={Groups Acting on Graphs},
  author={Dicks, W. and Dunwoody, M.J.},
  isbn={9780521180009},
  series={Cambridge Studies in Advanced Mathematics},
  year={2011},
  publisher={Cambridge University Press}
}

@article{Bowditch08,
    AUTHOR = {Bowditch, Brian H.},
     TITLE = {Tight geodesics in the curve complex},
   JOURNAL = {Invent. Math.},
  FJOURNAL = {Inventiones Mathematicae},
    VOLUME = {171},
      YEAR = {2008},
    NUMBER = {2},
     PAGES = {281--300},
      ISSN = {0020-9910,1432-1297},
   MRCLASS = {57M50 (20F65)},
  MRNUMBER = {2367021},
MRREVIEWER = {Jason\ A.\ Behrstock},
       DOI = {10.1007/s00222-007-0081-y},
       URL = {https://doi.org/10.1007/s00222-007-0081-y},
}

@article{Gromov87,
  author={Misha Gromov},
  year={1987},
  title={Hyperbolic Groups},
  journal={Math. Sci. Res. Inst. Publ.},
  volume = {8},
  pages={75-263}}

@article{Bala16,
    AUTHOR = {Balasubramanya, Sahana H.},
     TITLE = {Acylindrical group actions on quasi-trees},
   JOURNAL = {Algebr. Geom. Topol.},
  FJOURNAL = {Algebraic \& Geometric Topology},
    VOLUME = {17},
      YEAR = {2017},
    NUMBER = {4},
     PAGES = {2145--2176},
      ISSN = {1472-2747,1472-2739},
   MRCLASS = {20F67 (20E08 20F65)},
  MRNUMBER = {3685605},
MRREVIEWER = {Olympia\ Talelli},
       DOI = {10.2140/agt.2017.17.2145},
       URL = {https://doi.org/10.2140/agt.2017.17.2145},
}

@article{Fujiwara21,
    AUTHOR = {Fujiwara, Koji},
     TITLE = {The rates of growth in an acylindrically hyperbolic group},
   JOURNAL = {Groups Geom. Dyn.},
  FJOURNAL = {Groups, Geometry, and Dynamics},
    VOLUME = {19},
      YEAR = {2025},
    NUMBER = {1},
     PAGES = {109--167},
      ISSN = {1661-7207,1661-7215},
   MRCLASS = {20F65},
  MRNUMBER = {4862328},
MRREVIEWER = {Bin\ Sun},
       DOI = {10.4171/ggd/820},
       URL = {https://doi.org/10.4171/ggd/820},
}

@article{GitikMahanRipsSageev98,
    AUTHOR = {Gitik, Rita and Mitra, Mahan and Rips, Eliyahu and Sageev,
              Michah},
     TITLE = {Widths of subgroups},
   JOURNAL = {Trans. Amer. Math. Soc.},
  FJOURNAL = {Transactions of the American Mathematical Society},
    VOLUME = {350},
      YEAR = {1998},
    NUMBER = {1},
     PAGES = {321--329},
      ISSN = {0002-9947,1088-6850},
   MRCLASS = {20F32 (57M07)},
  MRNUMBER = {1389776},
MRREVIEWER = {Athanase\ Papadopoulos},
       DOI = {10.1090/S0002-9947-98-01792-9},
       URL = {https://doi.org/10.1090/S0002-9947-98-01792-9},
}

@article{Culler1983VarietiesOG,
    AUTHOR = {Culler, Marc and Shalen, Peter B.},
     TITLE = {Varieties of group representations and splittings of {$3$}-manifolds},
   JOURNAL = {Ann. of Math. (2)},
  FJOURNAL = {Annals of Mathematics. Second Series},
    VOLUME = {117},
      YEAR = {1983},
    NUMBER = {1},
     PAGES = {109--146},
      ISSN = {0003-486X},
   MRCLASS = {57N10},
  MRNUMBER = {683804},
MRREVIEWER = {G.\ Peter\ Scott},
       DOI = {10.2307/2006973},
       URL = {https://doi.org/10.2307/2006973},
}

@book{AschenbrennerFriedlWilton12,
author = {Aschenbrenner, Matthias and Friedl, Stefan and Wilton, Henry},
year = {2015},
title = {3-manifold groups},
publisher = {European Mathematical Society},
ISBN={3037191546}
}

@book{BridsonHaefliger,
  title={Metric Spaces of Non-Positive Curvature},
  author={Bridson, Martin R. and Haefliger, Andr\'e},
  isbn={3540643249},
  year={1999},
  publisher={Springer}
}

@Book{Hempel76,
author = {Hempel, John },
title = {3-manifolds},
isbn = {0821836951},
publisher = {Princeton University Press Princeton, N.J },
year = { 1976 },
language = { English },
}

@inproceedings{Osin18,
    AUTHOR = {Osin, Denis V.},
     TITLE = {Groups acting acylindrically on hyperbolic spaces},
 BOOKTITLE = {Proceedings of the {I}nternational {C}ongress of
              {M}athematicians---{R}io de {J}aneiro 2018. {V}ol. {II}.
              {I}nvited lectures},
     PAGES = {919--939},
 PUBLISHER = {World Sci. Publ., Hackensack, NJ},
      YEAR = {2018},
   MRCLASS = {20F65 (20F67 20F69)},
  MRNUMBER = {3966794},
MRREVIEWER = {Camille\ Horbez},
}

@article {Sela97,
    AUTHOR = {Sela, Z.},
     TITLE = {Acylindrical accessibility for groups},
   JOURNAL = {Invent. Math.},
  FJOURNAL = {Inventiones Mathematicae},
    VOLUME = {129},
      YEAR = {1997},
    NUMBER = {3},
     PAGES = {527--565},
      ISSN = {0020-9910,1432-1297},
   MRCLASS = {20F32 (20E06 57N10)},
  MRNUMBER = {1465334},
MRREVIEWER = {Eric\ M.\ Freden},
       DOI = {10.1007/s002220050172},
       URL = {https://doi.org/10.1007/s002220050172},
}

@article{Scott72,
    AUTHOR = {Scott, G. P.},
     TITLE = {On sufficiently large {$3$}-manifolds},
   JOURNAL = {Quart. J. Math. Oxford Ser. (2)},
  FJOURNAL = {The Quarterly Journal of Mathematics. Oxford. Second Series},
    VOLUME = {23},
      YEAR = {1972},
     PAGES = {159--172; correction, ibid. (2) 24 (1973), 527--529},
      ISSN = {0033-5606,1464-3847},
   MRCLASS = {57A10},
  MRNUMBER = {383414},
MRREVIEWER = {Wolfgang\ H.\ Heil},
       DOI = {10.1093/qmath/23.2.159},
       URL = {https://doi.org/10.1093/qmath/23.2.159},
}

@inproceedings{Stallings1961OnFC,
    AUTHOR = {Stallings, John},
     TITLE = {On fibering certain {$3$}-manifolds},
 BOOKTITLE = {Topology of 3-manifolds and related topics ({P}roc. {T}he {U}niv. of {G}eorgia {I}nstitute, 1961)},
     PAGES = {95--100},
 PUBLISHER = {Prentice-Hall, Inc., Englewood Cliffs, NJ},
      YEAR = {1961},
   MRCLASS = {54.78},
  MRNUMBER = {158375},
MRREVIEWER = {O.\ G.\ Harrold},
}

@article{Culler1996AGT,
    AUTHOR = {Culler, Marc and Vogtmann, Karen},
     TITLE = {A group-theoretic criterion for property {${\rm FA}$}},
   JOURNAL = {Proc. Amer. Math. Soc.},
  FJOURNAL = {Proceedings of the American Mathematical Society},
    VOLUME = {124},
      YEAR = {1996},
    NUMBER = {3},
     PAGES = {677--683},
      ISSN = {0002-9939,1088-6826},
   MRCLASS = {20E08},
  MRNUMBER = {1307506},
MRREVIEWER = {Olympia\ Talelli},
       DOI = {10.1090/S0002-9939-96-03217-0},
       URL = {https://doi.org/10.1090/S0002-9939-96-03217-0},
}

@article{OlivierWise04,
    AUTHOR = {Ollivier, Yann and Wise, Daniel T.},
     TITLE = {Kazhdan groups with infinite outer automorphism group},
   JOURNAL = {Trans. Amer. Math. Soc.},
  FJOURNAL = {Transactions of the American Mathematical Society},
    VOLUME = {359},
      YEAR = {2007},
    NUMBER = {5},
     PAGES = {1959--1976},
      ISSN = {0002-9947,1088-6850},
   MRCLASS = {20F06 (20E22 20F28 20P05)},
  MRNUMBER = {2276608},
MRREVIEWER = {Alain\ Valette},
       DOI = {10.1090/S0002-9947-06-03941-9},
       URL = {https://doi.org/10.1090/S0002-9947-06-03941-9},
}

@article{minasyanOsin19,
    AUTHOR = {Minasyan, Ashot and Osin, Denis},
     TITLE = {Correction to: {A}cylindrical hyperbolicity of groups acting
              on trees [{MR}3368093]},
   JOURNAL = {Math. Ann.},
  FJOURNAL = {Mathematische Annalen},
    VOLUME = {373},
      YEAR = {2019},
    NUMBER = {1-2},
     PAGES = {895--900},
      ISSN = {0025-5831,1432-1807},
   MRCLASS = {20F67 (20E06 20E08 20F65 57M05)},
  MRNUMBER = {3968890},
MRREVIEWER = {Mohammad\ Shahryari},
       DOI = {10.1007/s00208-018-1699-3},
       URL = {https://doi.org/10.1007/s00208-018-1699-3},
}

@article{NibloReeves97,
    AUTHOR = {Niblo, Graham and Reeves, Lawrence},
     TITLE = {Groups acting on {${\rm CAT}(0)$} cube complexes},
   JOURNAL = {Geom. Topol.},
  FJOURNAL = {Geometry and Topology},
    VOLUME = {1},
      YEAR = {1997},
      ISSN = {1465-3060,1364-0380},
   MRCLASS = {57M07 (20F32)},
  MRNUMBER = {1432323},
MRREVIEWER = {Philip\ L.\ Bowers},
       DOI = {10.2140/gt.1997.1.1},
       URL = {https://doi.org/10.2140/gt.1997.1.1},
}

@article{Zuk03,
    AUTHOR = {\.{Z}uk, A.},
     TITLE = {Property ({T}) and {K}azhdan constants for discrete groups},
   JOURNAL = {Geom. Funct. Anal.},
  FJOURNAL = {Geometric and Functional Analysis},
    VOLUME = {13},
      YEAR = {2003},
    NUMBER = {3},
     PAGES = {643--670},
      ISSN = {1016-443X,1420-8970},
   MRCLASS = {20F65},
  MRNUMBER = {1995802},
MRREVIEWER = {Stephen\ J.\ Pride},
       DOI = {10.1007/s00039-003-0425-8},
       URL = {https://doi.org/10.1007/s00039-003-0425-8},
}

@article{Watatani,
    AUTHOR = {Watatani, Yasuo},
     TITLE = {Property {T} of {K}azhdan implies property {FA} of {S}erre},
   JOURNAL = {Math. Japon.},
  FJOURNAL = {Mathematica Japonica},
    VOLUME = {27},
      YEAR = {1982},
    NUMBER = {1},
     PAGES = {97--103},
      ISSN = {0025-5513},
   MRCLASS = {22B05 (43A35)},
  MRNUMBER = {649023},
MRREVIEWER = {Harald\ Rindler},
}

@article{Linton22,
    AUTHOR = {Linton, Marco},
     TITLE = {One-relator hierarchies},
   JOURNAL = {Duke Math. J.},
  FJOURNAL = {Duke Mathematical Journal},
    VOLUME = {174},
      YEAR = {2025},
    NUMBER = {4},
     PAGES = {747--802},
      ISSN = {0012-7094,1547-7398},
   MRCLASS = {20F65},
  MRNUMBER = {4905535},
       DOI = {10.1215/00127094-2024-0040},
       URL = {https://doi.org/10.1215/00127094-2024-0040},
}

@article{Kerr2021ProductSG,
  title={Product set growth in virtual subgroups of mapping class groups},
  author={Alice Kerr},
  year={2025},
  volume = {25},
  number = {5},
  journal = {Algebraic \& Geometric Topology},
  pages = {2757–2806}
}

@article{FLOYD1982263,
    AUTHOR = {Floyd, W. and Hatcher, A.},
     TITLE = {Incompressible surfaces in punctured-torus bundles},
   JOURNAL = {Topology Appl.},
  FJOURNAL = {Topology and its Applications},
    VOLUME = {13},
      YEAR = {1982},
    NUMBER = {3},
     PAGES = {263--282},
      ISSN = {0166-8641,1879-3207},
   MRCLASS = {57N10},
  MRNUMBER = {651509},
MRREVIEWER = {John\ Hempel},
       DOI = {10.1016/0166-8641(82)90035-9},
       URL = {https://doi.org/10.1016/0166-8641(82)90035-9},
}

@article{HW09,
    AUTHOR = {Hruska, G. Christopher and Wise, Daniel T.},
     TITLE = {Packing subgroups in relatively hyperbolic groups},
   JOURNAL = {Geom. Topol.},
  FJOURNAL = {Geometry \& Topology},
    VOLUME = {13},
      YEAR = {2009},
    NUMBER = {4},
     PAGES = {1945--1988},
      ISSN = {1465-3060,1364-0380},
   MRCLASS = {20F67 (20F65 20F69)},
  MRNUMBER = {2497315},
MRREVIEWER = {Fran\c{c}ois\ Dahmani},
       DOI = {10.2140/gt.2009.13.1945},
       URL = {https://doi.org/10.2140/gt.2009.13.1945},
}

@article{Hruska08,
    AUTHOR = {Hruska, G. Christopher},
     TITLE = {Relative hyperbolicity and relative quasiconvexity for
              countable groups},
   JOURNAL = {Algebr. Geom. Topol.},
  FJOURNAL = {Algebraic \& Geometric Topology},
    VOLUME = {10},
      YEAR = {2010},
    NUMBER = {3},
     PAGES = {1807--1856},
      ISSN = {1472-2747,1472-2739},
   MRCLASS = {20F65 (20F67)},
  MRNUMBER = {2684983},
MRREVIEWER = {Eduardo\ Mart\'{\i}nez-Pedroza},
       DOI = {10.2140/agt.2010.10.1807},
       URL = {https://doi.org/10.2140/agt.2010.10.1807},
}

@article{HaglundWise12,
    AUTHOR = {Haglund, Fr\'{e}d\'{e}ric and Wise, Daniel T.},
     TITLE = {A combination theorem for special cube complexes},
   JOURNAL = {Ann. of Math. (2)},
  FJOURNAL = {Annals of Mathematics. Second Series},
    VOLUME = {176},
      YEAR = {2012},
    NUMBER = {3},
     PAGES = {1427--1482},
      ISSN = {0003-486X,1939-8980},
   MRCLASS = {20F67 (20E06 20E26)},
  MRNUMBER = {2979855},
MRREVIEWER = {Yago\ Antol\'{\i}n},
       DOI = {10.4007/annals.2012.176.3.2},
       URL = {https://doi.org/10.4007/annals.2012.176.3.2},
}

@book{WiseQCBook,
  title={The Structure of Groups with a Quasiconvex Hierarchy},
  author={Wise, Daniel T},
  isbn={978-0691170442},
  year={2021},
  publisher={Princeton University Press}
}

@article {Knopf19,
    AUTHOR = {Knopf, Svenja},
     TITLE = {Acylindrical actions on trees and the {F}arrell-{J}ones
              conjecture},
   JOURNAL = {Groups Geom. Dyn.},
    VOLUME = {13},
      YEAR = {2019},
    NUMBER = {2},
     PAGES = {633--676}
}

@article{Hamenstadt08, 
    AUTHOR = {Hamenst\"{a}dt, Ursula},
     TITLE = {Bounded cohomology and isometry groups of hyperbolic spaces},
   JOURNAL = {J. Eur. Math. Soc. (JEMS)},
  FJOURNAL = {Journal of the European Mathematical Society (JEMS)},
    VOLUME = {10},
      YEAR = {2008},
    NUMBER = {2},
     PAGES = {315--349},
      ISSN = {1435-9855,1435-9863},
   MRCLASS = {53C24 (37F99)},
  MRNUMBER = {2390326},
MRREVIEWER = {Christopher\ Connell},
       DOI = {10.4171/JEMS/112},
       URL = {https://doi.org/10.4171/JEMS/112},
}

@article{MonodShalom06,
    AUTHOR = {Monod, Nicolas and Shalom, Yehuda},
     TITLE = {Orbit equivalence rigidity and bounded cohomology},
   JOURNAL = {Ann. of Math. (2)},
  FJOURNAL = {Annals of Mathematics. Second Series},
    VOLUME = {164},
      YEAR = {2006},
    NUMBER = {3},
     PAGES = {825--878},
      ISSN = {0003-486X,1939-8980},
   MRCLASS = {37A20 (22F10 37A15)},
  MRNUMBER = {2259246},
MRREVIEWER = {Sergey\ L.\ Gefter},
       DOI = {10.4007/annals.2006.164.825},
       URL = {https://doi.org/10.4007/annals.2006.164.825},
}

@article{MonodShalom04,
    AUTHOR = {Monod, Nicolas and Shalom, Yehuda},
     TITLE = {Cocycle superrigidity and bounded cohomology for negatively
              curved spaces},
   JOURNAL = {J. Differential Geom.},
  FJOURNAL = {Journal of Differential Geometry},
    VOLUME = {67},
      YEAR = {2004},
    NUMBER = {3},
     PAGES = {395--455},
      ISSN = {0022-040X,1945-743X},
   MRCLASS = {53C24 (22F10)},
  MRNUMBER = {2153026},
MRREVIEWER = {Christopher\ Connell},
       URL = {http://projecteuclid.org/euclid.jdg/1102091355},
}

@article{HullOsin13,
    AUTHOR = {Hull, Michael and Osin, Denis},
     TITLE = {Induced quasicocycles on groups with hyperbolically embedded
              subgroups},
   JOURNAL = {Algebr. Geom. Topol.},
  FJOURNAL = {Algebraic \& Geometric Topology},
    VOLUME = {13},
      YEAR = {2013},
    NUMBER = {5},
     PAGES = {2635--2665},
      ISSN = {1472-2747,1472-2739},
   MRCLASS = {20F65 (20F67 20J06 43A15 57M07)},
  MRNUMBER = {3116299},
MRREVIEWER = {Alexander\ Fel\cprime shtyn},
       DOI = {10.2140/agt.2013.13.2635},
       URL = {https://doi.org/10.2140/agt.2013.13.2635},
}

@article {Osin06,
    AUTHOR = {Osin, Denis V.},
     TITLE = {Relatively hyperbolic groups: intrinsic geometry, algebraic
              properties, and algorithmic problems},
   JOURNAL = {Mem. Amer. Math. Soc.},
  FJOURNAL = {Memoirs of the American Mathematical Society},
    VOLUME = {179},
      YEAR = {2006},
    NUMBER = {843},
     PAGES = {vi+100},
      ISSN = {0065-9266,1947-6221},
   MRCLASS = {20F67},
  MRNUMBER = {2182268},
MRREVIEWER = {Ilya\ Kapovich},
       DOI = {10.1090/memo/0843},
       URL = {https://doi.org/10.1090/memo/0843},
}

@incollection {WiseCoherence,
    AUTHOR = {Wise, Daniel T.},
     TITLE = {An invitation to coherent groups},
 BOOKTITLE = {What's next?---the mathematical legacy of {W}illiam {P}.
              {T}hurston},
    SERIES = {Ann. of Math. Stud.},
    VOLUME = {205},
     PAGES = {326--414},
 PUBLISHER = {Princeton Univ. Press, Princeton, NJ},
      YEAR = {2020},
   MRCLASS = {20F65 (20E07 57M07)},
  MRNUMBER = {4205645},
MRREVIEWER = {Enric\ Ventura Capell},
       DOI = {10.2307/j.ctvthhdvv.16},
       URL = {https://doi.org/10.2307/j.ctvthhdvv.16},
}

@unpublished{HatcherNotes,
author = {Hatcher, Allen},
year = {},
title = {Notes on Basic 3-Manifold Topology},
note = {An unpublshed but semi-regularly updated set of notes on $3$-manifolds, originally intended to be published as a book. Can be found at \url{https://pi.math.cornell.edu/~hatcher/3M/3Mdownloads.html}. Last updated 2023.},
accessed = {July 2025}
}

@article{BalasubramanyaFiniteIndex,
    AUTHOR = {Balasubramanya, Sahana H.},
     TITLE = {Finite extensions of {$\mathcal H$}- and {$\mathcal{AH}$}-accessible
              groups},
   JOURNAL = {Topology Proc.},
  FJOURNAL = {Topology Proceedings},
    VOLUME = {56},
      YEAR = {2020},
     PAGES = {297--304},
      ISSN = {0146-4124,2331-1290},
   MRCLASS = {20F67 (20E08 20F65)},
  MRNUMBER = {4066984},
       DOI = {10.1134/s0012266120030039},
       URL = {https://doi.org/10.1134/s0012266120030039},
}

@book{brown_cohomology,
    AUTHOR = {Brown, Kenneth S.},
     TITLE = {Cohomology of groups},
    SERIES = {Graduate Texts in Mathematics},
    VOLUME = {87},
      NOTE = {Corrected reprint of the 1982 original},
 PUBLISHER = {Springer-Verlag, New York},
      YEAR = {1994},
     PAGES = {x+306},
      ISBN = {0-387-90688-6},
   MRCLASS = {20J05 (20-02)},
  MRNUMBER = {1324339}
}

@book{kinsey1997topology,
  title={Topology of Surfaces},
  author={Kinsey, L.C.},
  isbn={9780387941028},
  series={Undergraduate Texts in Mathematics},
  year={1997},
  publisher={Springer New York}
}
\end{document}